\def\ZZ         {{\mathbb Z}}
\def\RR         {{\mathbb R}}
\def\CC         {{\mathbb C}}
\def\HH         {{\mathbb H}}
\def\QQ         {{\mathbb Q}}
\def\PP         {{\mathbb P}}
\def\ZZ         {{\mathbb Z}}
\def\E           {{\cal E}}
\def\F          {{\cal F}}
\def\H         {{\cal H}}
\def\L          {{\cal L}}
\def\O          {{\cal O}}
\def\Q      {{\cal Q}}
\def\ii         {{\rm i}}
\def\ee         {{\rm e}}
\def\sin        {{\rm sin}}
\def\sinh       {{\rm sinh}}
\def\Ell        {{\cal ELL}}
\def\Prod       {{\displaystyle\prod}}
\def\cal        {\mathcal}
\newtheorem{theorem}{Theorem}[section]
\newtheorem{prop}[theorem]{Proposition}
\newtheorem{corollary}[theorem]{Corollary}
\theoremstyle{definition}
\newtheorem{dfn}[theorem]{Definition}
\newtheorem{example}[theorem]{Example}
\theoremstyle{remark}
\newtheorem{remark}[theorem]{Remark}
\title{Elliptic genus of singular algebraic varieties and quotients}
\author{Anatoly Libgober}
\address{Department of Mathematics\\
University of Illinois\\
Chicago, IL 60607 and 
National Science Foundation,
4201 Wilson Blvd. Arlington VA, 22230}
\email{libgober@math.uic.edu}
\thanks{Author supported by a grant from Simons Foundation}
\begin{document}
\begin{abstract} 
We discuss the basic properties of various versions of 
two variable elliptic genus with special attention to the equivariant 
elliptic genus. The main applications are to the elliptic genera
attached to non-compact GITs, including the elliptic genera of
Witten's phases on $N=2$ 
theories.
\end{abstract}

\maketitle

\section{Preface}

This paper provides an overview of the main results on complex elliptic genus while its  
 second part  focuses on equivariant elliptic genus and contains additional 
details regarding treatment of elliptic genera of phases of $N=2$ theories
given in \cite{phasesme}. The first two sections  give chronological
reviews of highlights of development of elliptic genus as well as relations 
to other problems since its introduction in 1980s (section
\ref{intro}) and recalls the key definitions along with the properties 
related to (complex, two variable) elliptic genera
(cf. section \ref{review}). Then we describe the 
equivariant elliptic genera using approach to  
equivariant cohomology given in \cite{edidin}. It gives a fast way 
to derive basic properties of equivariant elliptic genus obtained in
\cite{waelder} from non-equivariant version given in \cite{BLannals}.
The final sections review the properties of elliptic genera of 
Witten phases of $N=2$ theories (cf. \cite{n2}), following \cite{phasesme}, but also makes explicite 
specializations of elliptic genus to $\chi_y$-genus and the euler
characteristics, providing in Landau-Ginzburg instance new links between elliptic genus 
and invariant of singularities. Example \ref{lgexample} gives direct 
calculation of $\chi_y$-genus of LG phase (and can be a starting point
for reader interested on singularity theory) while the last section obtains
it as a specialization of elliptic genus. Two appendices record
well known information on basics of theta-functions and quasi-jacobi
forms introduced in \cite{mequasijacobi}.

\section{Introduction}\label{intro} Elliptic genus appeared in the middle of 80s 
in the works of topologists and physicists. 
In mathematics, it was viewed as either index of an operator
on a graded infinite dimensional vector 
bundle with finite dimensional graded components (cf.(\ref{formula1})
below) or (via Riemann-Roch) as a combination of characteristic
numbers (cf.(\ref{formula2})).
Motivation was the problem of finding 
rigid genera of differentiable
Spin-manifolds 
endowed with a circle action
(cf. \cite{landweberpaper},\cite{krichever}) extending 
Atiyah-Hirzebruch rigidity of $\hat A$-genus.   
In physics, elliptic genera appeared as
 indices of certain Dirac-like operators in free loop space 
associated with Spin-manifolds and also in connection with anomaly cancellations 
(cf. volume \cite{landweber}, \cite{wittenellgen} for overview of the first results 
 and references therein e.g. \cite{schekel}). 

Initial versions of elliptic genus were given in the context of differentiable
manifolds but complex versions of elliptic genus were proposed by F.Hirzebruch
(cf. \cite{hirzarticle}, see also \cite{krichever}, \cite{hohn}) 
and by E.Witten (\cite{landweber}) soon after.  
A different type of elliptic genus, associated with
$C^{\infty}$-manifolds with vanishing first Pontryagin class was
proposed by Witten (cf. \cite{landweber}). 
It lead to important connections with homotopy theory
and elliptic cohomology (cf. \cite{hopkins}, \cite{ando}).

This first period culminated with the proof of Witten's rigidity
conjecture by R.Bott and C.Taubes (cf.\cite{botttaubes}). 
In complex case rigidity has been proven by 
Hirzebruch and in \cite{krichever}
(for somewhat different but closely related versions of {\it complex}
elliptic genus).
Further study of rigidity was done in \cite{liurigid}.  
Much of material  from this first period   
is summarized in Hirzebruch's book \cite{hirzmod} to which we refer 
a reader. 

Elliptic genus, as an invariant of superconformal 
field theory (SCFT) (or of a representation of a superconformal algebra),  
was considered about the same time (cf. \cite{wittenlg} and references
to earlier works there). 
In the case of sigma models associated with a manifolds, an invariant of
SCFT becomes an invariant of underlying manifold. 
There are, however, other backgrounds with which one can associate SCFT
and obtain the invariants of such a background.
Such notable examples are the SCFTs which are  minimal models and
Landau-Ginzburg models. The latter are associated with weighted 
homogeneous polynomials with isolated singularities.   
Elliptic genus, as a character of representation of superconformal
algebra  in complex setting (i.e. $N=2$ SCFT) was
given in \cite{kawaiyang} following \cite{wittenlg}.

Mathematical version of the elliptic genus as an invariant of SCFT 
was presented in \cite{MSV} where the authors constructed the chiral deRham complex 
of a manifold and the associated vertex operator algebra (VOA). 
The characters of vertex operator algebras, which are close relatives 
of elliptic genus of SCFTs, were considered from the very beginning 
of the  study of VOAs (\cite{borcherds} \cite{mason}) though 
mathematical study of analogs of $N=2$ superconformal algebras
 still is not well developed at
the moment. 

In the late 90's, the focus of  mathematical study 
shifted to elliptic genera of singular varieties
(\cite{totaro}).
On one hand, it  was motivated by Goresky-MacPherson problem 
(cf.\cite{totaro},\cite{GM}) 
of determining the Chern numbers (rather than Chern classes, which 
being homological, are not a part of multiplicative structure and do not 
determine the Chern numbers) 
of singular varieties admitting a small resolution of singularities, 
which are independent of a resolution. 
The intersection homology groups, introduced in early 80s, 
do possess this property, cf. \cite{GM})) 

On the other hand, mirror symmetry is inherently interwoven 
with singular varieties: in its very first example 
 of smooth quintic in $\PP^4$ the mirror partner is 
{\it the orbifold} which is the global quotient of such quintic by the
action of abelian group of order 125 and exponent 5.
The relation between SCFTs, which is a physics definition 
of  mirror symmetry, implies the following relation 
between the elliptic genera of mirror partners (cf. \cite{wittenmirror})
\begin{equation}\label{mirror}
Ell(X)=(-1)^{dim X}Ell(X')
\end{equation} 

Mathematically, the relations similar to (\ref{mirror}) but involving
Hodge numbers, Gromov-Witten invariants, derived categories etc.
serve as either a definition or as a test of mirror symmetry.

The orbifold elliptic genus was proposed in the context of Landau-Ginzburg models
by Witten (cf. \cite{wittenlg}). In the context of manifolds (i.e. sigma-models) 
 the elliptic genus of orbifolds 
 apparently was understood in physics terms already right after introduction of 
orbifold euler characteristic (cf. \cite{dixon}, \cite{hirzebruchorbifold}).
Mathematical definition of the orbifold elliptic genus 
was given in \cite{Bduke}. This paper also contains 
a definition of elliptic genus for certain class of singular varieties, 
including the orbifolds,  
in terms of resolutions of singularities. The relation between
 both notions of elliptic genus of orbifolds, which is the so called McKay correspondence 
for elliptic genus, had been proven in \cite{BLannals}. It 
has as a very special case the numerical relations which in dimension 2,
are consequences of the relation between representations 
of finite subgroups of $SL_2(\CC)$ and resolutions of quotients
of $\CC^2$ due to J.McKay (cf. \cite{McKay}).

The identity  (\ref{mirror}) for the hypersurfaces in toric varieties 
corresponding to the dual polyhedra (Batyrev's mirror symmetry, cf.\cite{batyrev})
was shown in \cite{BLinvent}.
One of the major application of orbifold elliptic genus to the 
elliptic genera of Hilbert schemes of K3-surfaces 
was given in \cite{DMVV}. A vast generalization 
in the context orbfold elliptic genera of symmetric products 
was given in \cite{Bduke}. 

It is interesting to compare elliptic genus with the other invariants 
of smooth and singular varieties appearing in the context
of mirror symmetry: Hodge numbers, Gromov-Witten invariants,
and derived or Fukaya categories (cf .\cite{Konts}). The similar issues, as those  
mentioned above in the context of elliptic genus, e.g. search for 
extension of original definitions from smooth to singular varieties, 
behavior in mirror correspondence, MacKay correspondence etc.  
appeared in the study of all these invariants. 
However, results for one type rarely imply the results 
for others. For example, 
it is convenient to organize the Hodge numbers of smooth 
projective varieties into $E$-function: $E(u,v)=\sum h^{p,q}u^pv^q$.
It has the Hirzebruch's genus (cf. \cite{methods})
$\chi_y=\sum \chi(\Omega^p)y^p=\sum_{p,q} (-1)^qh^{p,q}y^p$
as specialization $y=u,v=-1$.
The elliptic genus is related to $\chi_y$-genus via:
$y^{-{{\rm dim} X } \over 2} \chi_{-y}(X)=lim_{q \rightarrow
  0}Ell(X)$.
However, neither $E$-function or $Ell(X)$ determine each other (cf. \cite{BLinvent}).
Both invariants factors through different universal rings of classes of
manifolds: the $K$-group of varieties, in the case of $E$-function,
and the group of unitary cobordisms, in the case of elliptic genus.
In fact, $E(u,v)$ is a homomorphism $K_{\CC}(Var) \rightarrow
\ZZ[u,v]$ while the elliptic genus is a homomorphism from 
the cobordism ring $\Omega^U \rightarrow \CC[a,b,c,d]$ to a certain 
polynomial algebra of functions on the product of $H \times \CC$ of upper and the
whole complex plane respectively.
Neither, appears to have an extension with good properties 
to a bigger ring (cf. however \cite{kachru}).  

An attempt to extend mathematical treatment of elliptic genus to a
wider context, which includes the elliptic genera of singular
varieties and Landau-Ginzburg models was made in \cite{phasesme}.
More specifically, for certain geometric invariants theory (GIT)
quotients one can define elliptic genus such that for 
the quotients considered by Witten in \cite{n2} and corresponding 
to Calabi Yau or Landau Ginzburg models they reproduce
respectively elliptic genera of Calabi Yau manifolds considered 
in mathematics and physics literature and the elliptic genera 
of Landau Ginzburg models considered in physics. 
The approach of \cite{phasesme} is based on use of equivariant elliptic genus
(in mathematics literature equivariant elliptic genus of compact varieties 
was considered by R.Waelder \cite{waelder}).  In particular it implies
LG/CY correspondence for elliptic genus as a consequence of 
equivriant McKay correspondence. In the following sections we 
spell out some of the details about elliptic genus of such GIT quotients.

\section{Review of previous work}\label{review}

\subsection{Complex manifolds}

The two variable elliptic genus, which is the subject of  
this paper, can be defined as the holomorphic euler characteristic 
of a bi-graded bundle associated with the manifold. More precisely,
given a vector
bundle $F$ on a complex manifold $X$, one associates with it   
the Poincare series $\Lambda_t F=\sum \Lambda^i(F)t^i, 
S_t(F)=\sum Sym^i(F)t^i$  in the ring of polynomials in formal
variable $t$ with coefficients
in the  semi-ring generated by vector bundles. With these notations,    
the elliptic genus is given by Fourier expansion with coefficients of 
monomials $q^iy^j$ being the holomorphic euler characteristics 
of bi-graded components of the infinite
tensor product of graded bundles with $q^iy^j$ providing the
bi-grading
 (cf.  \cite{krichever}, \cite{hohn},
\cite{totaro}, \cite{BLinvent}):
\footnote{H\"ohn \cite{hohn} uses $y \rightarrow -y$.}

\begin{equation}\label{formula1} Ell(X)=
y^{-{{\rm dim} X} \over 2}
\chi(X,\otimes_{n \ge 1} 
\Bigl(\Lambda_{-yq^{n-1}}\Omega^1_X \otimes \Lambda_{-y^{-1}q^n} 
T_{X} \otimes S_{q^n}\Omega^1_X \otimes
S_{q^n} T_X \Bigr) \otimes K_X^{-k}).
\end{equation}
(here $T_X, \Omega^1_X,K_X$ are respectively the tangent, cotangent
and canonical bundles of $X$ and $k$ is a constant).

Riemann-Roch theorem implies that (\ref{formula1}) is a linear combination 
of Chern numbers defined as follows. Evaluation of 
(\ref{formula1}) for  a compact complex manifold provides a
homomorphism of cobordism {\it ring} $\Omega^U$ of almost complex manifolds 
(cf. \cite{stong}). The target of this homomorphism is a ring of holomorphic 
functions on $\HH \times \CC$ where $\HH$ is the upper half-plane if one 
interprets the formal variable in (\ref{formula1}) as $q=e^{2\pi
  \sqrt{-1}\tau},
y=e^{2 \pi \sqrt{-1} z}, \tau \in H, z \in \CC$.  Hirzebruch's
formalism (cf. \cite{methods}) implies that any such a homomorphism $\phi: \Omega^U \rightarrow
R$ with values in a commutative ring $R$ (i.e. a $R$-valued genus)
can be specified by a formal power series $Q(x) \in R[[x]]$ so 
that $\phi(X)=\Prod Q(x_i)[X]$ is evaluation of the product series at the 
Chern roots $x_i$ on the fundamental class $[X] \in H_{{\rm dim_{\RR}}
  X}(X)$ of $X$ (the Chern roots $x_i$ satisfy $\Prod(1+x_i)=c(X)$ where
$c(X) \in  H^*(X)$ 
is the total Chern class of $X$).
In the case of (\ref{formula1}), $Q(x)$ is the Taylor series in
variables $x$ of the function 
${{x^{1-k}\theta( {x \over {2 \pi \sqrt{-1}}}-z,\tau)} \over {\theta({x \over {2
        \pi \sqrt{-1}}},\tau)}}$ and for  $k=0$ one has
\begin{equation}\label{formula2}
   Ell(X)=\Prod_i {{x\theta({x \over {2
        \pi \sqrt{-1}}}-z,\tau)} \over {\theta({x \over {2
        \pi \sqrt{-1}}},\tau)}}[X]
\end{equation}

The holomorphic functions, which are elliptic genera of manifolds
have important modularity properties.
If $c_1(X)=0$ then $Ell(X)$ is Jacobi form for semidirect
product of $SL_2(\ZZ)$ and $\ZZ^2$ (the Jacobi group) i.e. 
obeys the following transformation laws:
\begin{equation}
\phi({{a\tau+b} \over {c\tau+d}},{z \over {c\tau+d}})=
(c\tau+d)^k
e^{{2\pi \sqrt{-1} t c z^2} \over {c\tau+d}}\phi(z,\tau) 
\end{equation} 
$$\phi(\tau,z + \lambda \tau + \mu) =
(-1)^{2t(\lambda+\mu)}e^{−2\pi \sqrt{-1}t(\lambda^2 \tau+2\lambda z)}
\phi(\tau,z)$$
$$ \ \ \ a,b,c,d, \lambda, \mu \in \ZZ, ad-bc=1 
$$
Here $k,t \in \ZZ$ are weight and index respectively of the (weak)
Jacobi form $\phi(z,\tau)$.
For Calabi Yau manifold of dimension $d$, $Ell(X)$ given by
(\ref{formula1}) or (\ref{formula2}) is Jacobi form of weight zero and
index $d \over 2$. \footnote{different normalizations in (\ref{formula2}), 
used in some papers, may
  lead to a different weight and index.}

Without Calabi Yau condition (\ref{formula2}) is a
{\it quasi-Jacobi form} in the sense of \cite{mequasijacobi}
(cf. also Appendix II below) 
\footnote{D.Zagier pointed out that, at least for some of these
  functions, the term quasi-elliptic would be more
  appropriate}.
It follows (cf. \cite{mequasijacobi} theorem 2.12) that 
elliptic genera of almost complex manifolds 
are polynomials 
in $\hat E_2(z,\tau)=(E_2(z,\tau)-e_2(\tau)) ({{\theta(z,\tau)} \over {\theta'(0,\tau)}})^2
, \hat E_n(z,\tau)=E_n(z,\tau) ({{\theta(z,\tau)} \over {\theta'(0,\tau)}})^n
, n \ge 1$ where $E_n(z,\tau)$ are 
the two variable Eisenstein series (cf. Example \ref{eisenstein})
\begin{equation}
      E_n(z,\tau)=\sum_{a,b \in \ZZ^2}({1 \over {z+a\tau+b}})^n   \ \ \ n
      \in \ZZ, n \ge 1 
\end{equation} 
(with appropriate choice of summation order for $n=1,2$ cf. \cite{mequasijacobi})
and $e_2(\tau)$ is the one variable Eisenstein series.
\footnote{i.e. $E_2(0,\tau)$ with omitted summand corresponding to
  $a=b=0$.}

For example, the elliptic genus of a complex surface of degree $d$ in
$\PP^3$ can be calculated as
\begin{equation}
(E_1^2({1 \over 2}d^2-4d+8)d+
(E_2-e_2) ({d^2 \over 2}-2)d) ({{\theta(z,\tau)} \over {\theta'(0,\tau)}})^2
 \end{equation}
In particular for K3-surface, i.e. the case $d=4$, one obtains
$$24(E_2-e_2) ({{\theta(z,\tau)} \over {\theta'(0,\tau)}})^2$$

The elliptic genus considered in \cite{BLinvent} is given by
(\ref{formula2}) or 
(\ref{formula1}) with $k=0$. 
Up to a factor depending only on dimension (cf. \cite{BLinvent},
Prop.2.3), it coincides with the 
elliptic genus considered in \cite{krichever}, \cite{totaro}, \cite{hohn}.
The latter two works use Weierstrass
$\sigma$-function (cf.(\ref{sigmafunction}), in Appendix I) 
writing the characteristic series 
as follows\footnote{these papers use instead of $\Upsilon$ the 
notation $\Phi(x,\tau)$ which 
a little different than the one used in \cite{hirzmod}; notations here
and below
are the same as in \cite{hirzmod}.}. Let 
\begin{equation}
    \Upsilon(x,\tau)=e^{-G_2(\tau)x^2-{x \over 2}}\sigma(x,\tau)=
e^{-{x \over 2}}2\sinh({x \over 2})\prod_{n=1}^{\infty}
{{(1-q^ne^x)(1-q^ne^{-x})}\over {(1-q^n)^2}} 
\end{equation}
(here $G_2(\tau)=-{1 \over {24}}+\sum_{n \ge 1}(\sum d \vert n)q^n$ 
cf. (\ref{eisenstein}), Appendix I). Then
 $$Q(x)=e^{kx}x{{\Upsilon({x}-z,\tau)} \over 
    {\Upsilon({x})\Upsilon(-z,\tau)}}$$
(which, for $k=0$, is differ from (\ref{formula2}) by a  
factor which is $(e^{-\pi \sqrt{-1} z}\Upsilon(-z,\tau))^{{\rm dim} X}$).

Hirzebruch-Witten elliptic genus of an almost complex manifold
corresponds to the characteristic series:
\begin{equation}\label{hirzebruchwitten}
     e^{{\bar k \over N}x}{{\Upsilon(x-\alpha)} \over {\Upsilon(x)\Upsilon(-\alpha)}}  \
     \ \ \ \alpha={2 \pi \sqrt{-1}}{{(a\tau+b)}\over N}, \ \ {0 \le
       a,b, \bar k  <N}
\end{equation}
Specialization $q=0$ of (\ref{hirzebruchwitten}) yields the holomorphic
euler characteristic $\chi_y(X,K^{k \over N})$ and
specialization of  (\ref{formula1}) to $z={{2 \pi
    \sqrt{-1}(\alpha+\beta \tau)} \over N}$, up to a factor 
depending on dimension gives Hirzebruch-Witten elliptic genus 
(\ref{hirzebruchwitten}) (\cite{BLinvent}, Prop.2.4). 
As (\ref{formula2}), this is invariant of almost complex 
manifolds but it has the following modular property:
if $c_1(X)=0 \ {\rm mod} \ N$ then Hirzebruch-Witten elliptic genus 
is a modular form for the subgroup $\Gamma_0(N)$ of the modular group.
If $N=2$ in (\ref{hirzebruchwitten}) then
 the Hirzebruch-Witten genus depends on Pontryagin (rather than Chern) 
classes of $X$ only and 
is an invariant of $C^{\infty}$-manifolds which is modular (for
$\Gamma_0(2)$) if manifold is Spin. This is the first 
instance of elliptic genus which appeared in mathematics literature
and is due to Ochanine-Landweber-Stong (cf. \cite{landweber})

\subsection{Orbifold elliptic genus}\label{orbellgen} The elliptic genus of orbifolds 
which are global quotients was defined in \cite{Bduke} as follows
(this definition was extended to arbitrary orbifolds in \cite{liuma}). 

Let $X$ be a smooth projective variety and  let $\Gamma$ be a finite group
of its automorphisms. For an element $g \in \Gamma$ let $X^g$ denote its 
fixed point set. For a connected component $\bar X^g$ of  $X^g$ we consider the
decomposition 
into the eigenspaces of $g$ for the  restriction $TX\vert_{\bar X^g}$ of the
tangent bundle of $X$ on  $\bar X^g$. We represent each eigenvalue of $g$ acting
on this restriction,  in the form $exp (2\pi \sqrt {-1}\lambda(g))$
 where $0 \le \lambda(g)<1$ and denote the eigenbundle corresponding 
to this eigenvalue as $V_{\lambda(g)}$. In particular $V_0$ is the
tangent bundle to $\bar X^g$ and $TX\vert_{\bar X^g}=V_0 \oplus
(\oplus_{\lambda(g) \ne 0} V_{\lambda(g)})$.
 We also denote by $F(g,\bar X^g)=\sum \lambda(g)$,  ``the
fermionic shift'' corresponding to the component $\bar X^g$.  
Then we let
\begin{equation}
 V_{h,\bar X^h \subset X}:=\otimes_{k≥1} (\Lambda_{yq^{k-1}}^{\bullet}V_0^*
\otimes \Lambda^{\bullet}_{y^{-1}q^k}V_0\otimes Sym^{\bullet}_{q^k}V_0^*
\otimes Sym_{q^k}^{\bullet}V_0 \otimes
\end{equation}
$$\otimes[\otimes_{\lambda(h) \ne
  0}\Lambda_{yq^{k-1+\lambda(h)}}^{\bullet}V_{\lambda(h)}^* \otimes
\Lambda^{\bullet}_{y^{-1}q^{k-\lambda(h)}}V_{\lambda(h)}
\otimes Sym^{\bullet}_{q^{k-1+\lambda(h)}}V_{\lambda(h)}^*
\otimes Sym^{\bullet}_{q^{k-\lambda(h)}}V_{\lambda(h)}]
$$
With these notations one defines the orbifold elliptic genus as:
\begin{equation}\label{orbgenusdef}
   Ell_{orb}(X,\Gamma; y, q) := y^{−{\rm dim} X/2}\sum_{\{h\} \in
     Conj(\Gamma),X^h} 
  y^{F (h,X^h \subseteq X)} {1 \over {C(h)}} \sum_{g \in C(h)} L(g, V_{h,X^h
    \subseteq X} )
\end{equation}
where $Conj(\Gamma)$ is the set of conjugacy classes in $\Gamma$, 
$C(h) \subseteq \Gamma$ is the centralizer of $h \in \Gamma$ and 
$L(g,V)$ is the holomorphic Lefschetz number of $g$ with coefficients 
in a holomorphic $g$-bundle $V$ i.e. $L(g,V)=\sum (-1)^itr(g,H^i(X^g,V))$.
Equivalent form of (\ref{orbgenusdef}) is 
\begin{equation}\label{orbcommuting}
Ell(X,\Gamma; y, q) := y^{−{\rm dim} X/2}
\sum_{\{h \} \in Conj(\Gamma),X^h \subseteq X}y^{F (h,X^h\subseteq
  X)}) \chi(H^{\bullet}(V_{h,X^h\subseteq X})^{C(h)} )
\end{equation}
where $\chi(H^{\bullet}(V_{h,X^h\subseteq X})^{C(h)} )$ is the
alternating sum of the dimensions of $C(h)$-invariant 
subspaces of the cohomology of bundles $V_{h,X^h\subseteq X}$.

Atiyah-Bott holomorphic Lefschetz formula
(cf. \cite{atiyahbott}),
allows to rewrite (\ref{orbgenusdef}) as follows. For a pair of commuting 
elements $g,h \in \Gamma$, let $X^{g,h}=X^g\cap X^h$ denotes the 
set of points in $X$ fixed by both $g$ and $h$. 
Then the expression of (\ref{orbgenusdef}) in terms of
characteristic classes is:  
\begin{equation}\label{commutingpairselliptic}
 {1 \over {\vert \Gamma \vert}} \sum_{g,h,gh=hg} (\Prod_{\lambda(g)=\lambda(h)=0}x_{\lambda})
\Prod_{\lambda}
{{\theta({x_{\lambda}\over {2 \pi
             i}}+\lambda(g)-\tau \lambda(h)-z)} \over 
  {\theta({x_{\lambda}\over {2 \pi
             i}}+\lambda(g)-\tau \lambda(h))}} e^{2 \pi i z
       \lambda(h)}[X^{g,h}] 
\end{equation}
where the products are taken over all Chern roots $x_{\lambda}$
(counted with their multiplicities) 
of the eigenbundles $V_{\lambda}$  corresponding to the 
logarithms $\lambda$ of the characters of abelian subgroup of $\Gamma$
generated by $g,h$. The term in this sum corresponding to the pair in
which both $g,h$ are the identities coincides with the elliptic genus
of $X$. We shall call this term the {\it ``trivial''} sector of the orbifold
elliptic genus. It is a summand in the {\it ``untwisted''} sector
representing sum of terms corresponding to pairs $(1,h)$.  

A notable application of mathematical definition (\ref{orbgenusdef})
is the following formula for the generating function for the 
orbifold elliptic genera of symmetric products:

\begin{theorem}\label{dmvv} (cf. \cite{Bduke})  Let $X$ be a smooth projective variety
and let $Ell(X)=\sum_{m,l} c(m, l)y^lq^m$.
Then 
\begin{equation} \label{dmvvidentity}
\sum  p^nEll_{orb}(X^n,\Sigma_n;y,q) =
\Prod_{i=1}^{\infty}\Prod_{m,l} {1 \over {(1-p^iq^my^l)^{c(mi,l)}} }
 \end{equation}
\end{theorem}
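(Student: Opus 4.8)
The plan is to start from the commuting\nobreakdash-pairs expression (\ref{commutingpairselliptic}) for $\Gamma=\Sigma_n$ acting on $X^n$ and to reorganize the generating series $\sum_n p^nEll_{orb}(X^n,\Sigma_n)$ by exploiting the multiplicativity of its summand over the orbits of the abelian group $\langle g,h\rangle$. A commuting pair $(g,h)\in\Sigma_n$ is the same datum as an action of $\ZZ^2$ on $\{1,\dots,n\}$, and the prefactor $\frac1{n!}\sum_{gh=hg}$ is precisely the groupoid cardinality of the category of $\ZZ^2$\nobreakdash-sets of size $n$, since $\frac1{n!}\#\{(g,h):gh=hg\}=\sum_{[S]:|S|=n}1/|\mathrm{Aut}(S)|$. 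First I would check that the summand in (\ref{commutingpairselliptic}) is multiplicative over the $\langle g,h\rangle$\nobreakdash-orbits: the simultaneous fixed locus $X^{g,h}$ is the small diagonal $\Prod_O X\hookrightarrow X^n$, one factor $X$ per orbit $O$, and the Chern\nobreakdash-root/theta product factors accordingly. The exponential formula for species then yields
\[
\sum_{n\ge0}p^nEll_{orb}(X^n,\Sigma_n)=\exp\Bigl(\sum_{N\ge1}\frac{p^N}{N}\sum_{[L]\,:\,[\ZZ^2:L]=N}w(L)\Bigr),
\]
the inner sum running over finite\nobreakdash-index sublattices $L\subset\ZZ^2$ (equivalently, transitive $\ZZ^2$\nobreakdash-sets $\ZZ^2/L$, whose automorphism group as a $\ZZ^2$\nobreakdash-set is $\ZZ^2/L$, of order $N$), and $w(L)$ the contribution of a single orbit of type $L$.

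Next I would compute $w(L)$ explicitly. Restricted to one orbit $O\cong\ZZ^2/L$ of size $N$, the tangent bundle of $X^N$ along the diagonal is $T_X\otimes\CC[\ZZ^2/L]$, so it splits into one copy of $T_X$ for each character $\chi$ of the finite abelian group $\ZZ^2/L$; on that copy $g$ and $h$ act by $\chi(g)=e^{2\pi\sqrt{-1}\,\lambda_\chi(g)}$ and $\chi(h)=e^{2\pi\sqrt{-1}\,\lambda_\chi(h)}$. The trivial character is the unique one with $\lambda_\chi(g)=\lambda_\chi(h)=0$, so it reproduces the ordinary elliptic\nobreakdash-genus integrand $\Prod_{x}x\,\theta(\tfrac{x}{2\pi\sqrt{-1}}-z,\tau)/\theta(\tfrac{x}{2\pi\sqrt{-1}},\tau)$ over the Chern roots $x$ of $T_X$, while the remaining $N-1$ characters contribute the shifted theta\nobreakdash-ratios and the factors $e^{2\pi\sqrt{-1}z\,\lambda_\chi(h)}$ of (\ref{commutingpairselliptic}). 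Thus $w(L)$ is the integral over $X$ of a product of theta\nobreakdash-ratios indexed by the characters of $\ZZ^2/L$.

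The crux is then to sum $w(L)$ over sublattices of fixed index and recognize the result as a rescaled copy of $Ell(X)$. I would put $L$ in Hermite normal form $L=\langle(i,0),(b,j)\rangle$ with $ij=N$ and $0\le b<i$, so that $g$ acts on $O$ as $j$ cycles of common length $i$ while $b$ encodes the gluing by $h$; the $g$\nobreakdash-direction characters are then the $i$\nobreakdash-th roots of unity and the $h$\nobreakdash-direction twist is governed by $(b,j)$. Applying the multiplication (distribution) formula for $\theta$ to collapse the product over the $i$\nobreakdash-th roots of unity rescales the $q$\nobreakdash-variable, and applying the quasi\nobreakdash-periodicity $\theta(w+a\tau+c,\tau)=(-1)^{a+c}e^{-\pi\sqrt{-1}\,a^2\tau-2\pi\sqrt{-1}\,aw}\theta(w,\tau)$ to the $h$\nobreakdash-twists turns the $e^{2\pi\sqrt{-1}z\,\lambda_\chi(h)}$ into the $y$\nobreakdash-rescaling. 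Carrying the bookkeeping through (the contribution proving independent of $b$, which furnishes the overall multiplicity), one is to establish
\[
\sum_{[L]\,:\,[\ZZ^2:L]=N}w(L)=\sum_{ij=N}\,i\sum_{m,l}c(mi,l)\,q^{\,mj}y^{\,lj},\qquad Ell(X)=\sum_{m,l}c(m,l)q^my^l .
\]
Substituting into the exponential and regrouping by $N=ij$ gives $\sum_{i,j\ge1}\sum_{m,l}\frac1j\,c(mi,l)\,(p^iq^my^l)^j$, which is exactly $-\log\Prod_{i}\Prod_{m,l}(1-p^iq^my^l)^{-c(mi,l)}$; exponentiating proves (\ref{dmvvidentity}).

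I expect the decisive obstacle to be this last identity: pinning down the precise theta\nobreakdash-function computation that collapses the product over the characters of $\ZZ^2/L$ into a single $q^{\,j}$\nobreakdash-, $y^{\,j}$\nobreakdash-rescaled elliptic genus, and matching the lattice parameters $(i,j,b)$ with the exponents $(i,m,l)$ in the target product. One must in particular verify that the \emph{full} $q$\nobreakdash-expansion of $Ell(X)$, not merely its Chern\nobreakdash-number leading term, is reproduced, so that a single orbit already contributes a complete block $\Prod_{m,l}(1-p^iq^my^l)^{-c(mi,l)}$ after the $b$\nobreakdash-summation; this is where the multiplication formula for $\theta$ and the correct normalization of $\Upsilon$ from Appendix I are indispensable. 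The remaining steps — multiplicativity over orbits and the formal exponential\nobreakdash-formula reorganization — are routine once that core identity is in place.
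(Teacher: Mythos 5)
The paper itself contains no proof of Theorem~\ref{dmvv}: it is quoted from \cite{Bduke} (with the original physics derivation in \cite{DMVV}). Your outline does reproduce the architecture of the proof in \cite{Bduke}: commuting pairs in $\Sigma_n$ regarded as $\ZZ^2$-actions on $\{1,\dots,n\}$, the groupoid/exponential formula reducing the generating series to a sum over transitive $\ZZ^2$-sets, i.e.\ over finite-index sublattices $L\subset\ZZ^2$ taken in Hermite normal form $L=\langle(i,0),(b,j)\rangle$, and theta-function identities evaluating the contribution $w(L)$ of a single orbit. The combinatorial reductions (multiplicativity of the summand of (\ref{commutingpairselliptic}) over orbits, $|\mathrm{Aut}(\ZZ^2/L)|=N$, the regrouping $N=ij$ matching the logarithm of the right-hand side of (\ref{dmvvidentity})) are all sound, up to a sign slip in your final $\log$ of the infinite product.

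The genuine gap is the identity you yourself flag as the ``decisive obstacle,'' namely $\sum_{[L]\,:\,[\ZZ^2:L]=N}w(L)=\sum_{ij=N}i\sum_{m,l}c(mi,l)q^{mj}y^{lj}$; this is the entire analytic content of the theorem and it is not carried out. Moreover, the one concrete thing you assert about how it goes is incorrect: $w(L)$ for $L=\langle(i,0),(b,j)\rangle$ is \emph{not} independent of $b$. Collapsing the product over the characters of $\ZZ^2/L$ via the distribution relation and quasi-periodicity of $\theta$ yields (a normalization of) $Ell(X)$ evaluated at $\tau\mapsto(j\tau+b)/i$ and at a rescaled elliptic variable, i.e.\ a series of the form $\sum_{m,l}c(m,l)\,e^{2\pi\sqrt{-1}\,mb/i}\,q^{mj/i}\,y^{lj}$, which carries fractional powers of $q$ and an explicit dependence on $b$ through the roots of unity $e^{2\pi\sqrt{-1}\,mb/i}$. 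It is precisely the summation over $b=0,\dots,i-1$ that annihilates the fractional powers, projects onto the coefficients $c(mi,l)$ with $i\mid m$, and simultaneously produces the multiplicity $i$; reading that multiplicity instead as ``$i$ copies of a $b$-independent contribution'' would give $i\sum_{m,l}c(m,l)q^{mj/i}(\cdots)$, which is not the right-hand side of (\ref{dmvvidentity}). So the proposal is a correct road map whose central computation is both missing and mis-described; completing it requires exactly the theta manipulations of \cite{Bduke}, together with care about the $y^{-n\,{\rm dim}\,X/2}$ and fermionic-shift normalizations that you implicitly absorb into (\ref{commutingpairselliptic}).
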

The formula (\ref{dmvvidentity}) and physics proof of this identity
(\ref{dmvvidentity})  was discovered in \cite{DMVV}.

\subsection{Elliptic genus of pairs}\label{pairs} The same work
\cite{Bduke}, besides the definition of elliptic genus of global
quotients, 
contains approach to elliptic genus of singular varieties, 
based on resolution of singularities, and in which 
the assumption that singularities are the quotients is replaced 
by an assumption coming from birational geometry:

\begin{dfn}\label{defsingularities} 
 {\it $\QQ$-Gorenstein varieties with klt singularities:} 
A normal variety $X$ is called Gorenstein if a Weil $\QQ$-divisor, which 
is a multiple of the divisor of the top degree differential form, is
Cartier.
A $\QQ$-Gorenstein variety is call klt (i.e. having Kawamata
log-terminal  
singularities) if there exist a resolution of singularities $\hat X
\rightarrow X$ such that coefficients of decomposition 
$K_{\hat X}=f^*(K_X)+\sum \alpha_kE_k$ satisfy $\alpha_k>-1$.
\end{dfn}

To define the elliptic genus of singular varieties with singularities
as in \ref{defsingularities} one first defines the elliptic genus of
pairs $X,E$ where $X$ is smooth and projective and $E$ is 
a $\QQ$-divisor on $X$ i.e. $E=\sum \alpha_kE_k$ is a formal sum 
such that components $E_k$ are smooth 
divisors on $X$ intersecting transversally. 
Moreover, one assumes that $\alpha_k >-1$ for all $k$.
In this situation one defines the cohomology class, 
called the {\it elliptic class} of pair $(X,E)$: 
\footnote{in $H^*(X,\QQ)$ tensored with a ring of functions in
$z,\tau$ appearing in expanding (\ref{ellgenpairsformula}) in $x$.
The ring of quasi-Jacobi forms described in Appendix II can be used.
Often below we shall by abuse of terminology tell that we consider 
elliptic class in cohomology (or Chow groups) meaning in fact that 
this class is in the extended in such a way cohomology (or Chow theory)}

\begin{equation}\label{ellgenpairsformula}
\Ell(X,E)
=\Prod_{i} {{x_i\theta({x_i \over {2
        \pi \sqrt{-1}}}-z,\tau) \theta'(0,\tau)} \over {\theta({x_i \over {2
        \pi \sqrt{-1}}},\tau)\theta (-z,\tau)}} \Prod_k {{\theta({e_k \over {2
        \pi \sqrt{-1}}}-(\alpha_k+1)z,\tau) \theta(-z,\tau)} \over {\theta({e_k\over {2
        \pi \sqrt{-1}}} -z,\tau) \theta(-(\alpha_k+1)z,\tau)}}
\end{equation}
The elliptic genus of pair then is evaluation of the elliptic 
class on the fundamental class of $X$:
$Ell(X,E)=\Ell(X,E)[X]$. 

The fundamental property of the elliptic class of pairs, allowing to
define the elliptic genus of a singular variety as the elliptic genus of
pair consisting of a resolution and certain divisor on the latter, is
the compatibility in 
the
blowups:

\begin{theorem} 
\label{blowup}(cf. \cite{BLannals} where a more general
  statement concerning orbifold elliptic class of pairs endowed with 
a $\Gamma$-action.) Let $(X,E)$ be a pair as described above after 
Def. \ref{defsingularities}, 
$Z$ a submanifold
of $X$ transversal to irreducible (all smooth) components $E_k$
of $E=-\sum \delta_kE_k$, $f: \hat X \rightarrow X$ be the blow up of
$X$ with center at $Z$, $Exc(f)$ its exceptional divisor, $\hat E_k$
are the proper preimages of components $E_k$, $\delta$ and $\hat
E$ are such that $\hat E=-\sum \delta_k\hat E_k-\delta Exc(f)$ and 
$K_{\hat X}+\hat E=f^*(K_X+E)$. Then $\delta>-1$ and  
\begin{equation}
     f_*(\Ell(\hat X,\hat E))=\Ell(X,E)
\end{equation}
In particalar $Ell(\hat X,\hat E)=Ell(X,E)$.
\end{theorem}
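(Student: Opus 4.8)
The plan is to pin down the coefficient $\delta$ by intersection theory on the blow-up, and then to prove the push-forward identity by localizing the difference of the two elliptic classes on the exceptional divisor and evaluating a fibre integral governed by an identity for theta functions. For the coefficient: since $Z$ is smooth of codimension $c=\codim_X Z$, the standard blow-up formula gives $K_{\hat X}=f^*K_X+(c-1)Exc(f)$, while for each component $f^*E_k=\hat E_k+m_k\,Exc(f)$ with $m_k=1$ when $Z\subseteq E_k$ and $m_k=0$ otherwise (transversality forces $m_k\in\{0,1\}$). Substituting these into $K_{\hat X}+\hat E=f^*(K_X+E)$ and comparing the coefficients of $Exc(f)$ yields
\begin{equation}
\delta=(c-1)+\sum_{Z\subseteq E_k}\delta_k .
\end{equation}
Because the $E_k$ cross normally and $Z\subseteq\bigcap_{Z\subseteq E_k}E_k$, the number $s$ of components containing $Z$ satisfies $s\le c$; combined with $\delta_k>-1$ this gives $\delta>(c-1)-s\ge -1$, the asserted bound. (Equivalently $\delta$ is the discrepancy $a(Exc(f);X,E)$, so $\delta>-1$ says the klt condition persists.)

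Next I would localize the push-forward. Write $D=Exc(f)$ with inclusion $i\colon D\hookrightarrow\hat X$, and recall $D=\PP(N_{Z/X})$ is a $\PP^{c-1}$-bundle $p\colon D\to Z$. Away from $D$ the map $f$ is an isomorphism, $\hat E$ agrees with $E$, and $T_{\hat X}=f^*T_X$, so the difference $\Ell(\hat X,\hat E)-f^*\Ell(X,E)$ vanishes on $\hat X\setminus D$; hence it is divisible by $[D]$ and equals $i_*(\gamma)$ for some class $\gamma$ on $D$. By the projection formula it then suffices to prove $f_*\,i_*(\gamma)=p_*(\gamma)=0$. To make $\gamma$ explicit I would expand both elliptic classes as products over Chern roots: the blow-up exact sequence relating $T_{\hat X}$ and $f^*T_X$ shows that exactly the $c$ tangent roots normal to $Z$ are modified by the tautological class of $D$ and the Chern roots $n_1,\dots,n_c$ of $N_{Z/X}$, while the only new boundary factor is the one attached to $D$ with coefficient fixed by $\delta$.

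The crux is then to show that the fibre integral over $\PP^{c-1}$ of the resulting theta-quotient vanishes. After restriction to $D$ the integrand is built from the two blocks
\[
\frac{x\,\theta\!\left(\frac{x}{2\pi\sqrt{-1}}-z,\tau\right)\theta'(0,\tau)}{\theta\!\left(\frac{x}{2\pi\sqrt{-1}},\tau\right)\theta(-z,\tau)},
\qquad
\frac{\theta\!\left(\frac{e}{2\pi\sqrt{-1}}-(\alpha+1)z,\tau\right)\theta(-z,\tau)}{\theta\!\left(\frac{e}{2\pi\sqrt{-1}}-z,\tau\right)\theta(-(\alpha+1)z,\tau)},
\]
evaluated on the tautological class and on the $n_j$, and the claim reduces to the statement that a certain elliptic function of the $\PP^{c-1}$-variable has vanishing total residue. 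The tools are the quasi-periodicity of $\theta$ in both pseudo-periods $1$ and $\tau$ (which converts the monodromy of the integrand around the elliptic curve into the telescoping that cancels the boundary factor attached to $D$) and the residue theorem on the elliptic curve $\CC/(\ZZ+\ZZ\tau)$, which forces the sum of fibre contributions to be zero. This is precisely where the exact shape of formula (\ref{ellgenpairsformula}) — the shift by $(\alpha_k+1)z$ and the normalizing factor $\theta(-z,\tau)/\theta(-(\alpha_k+1)z,\tau)$ — is used, and it is the step I expect to be the most delicate; organizing it as a residue computation rather than a brute-force $q$-expansion is what keeps it tractable, and one may either reduce the general center $Z$ to the case of a point or argue directly on the projective bundle.

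Finally, the two previous steps give $f_*\Ell(\hat X,\hat E)=\Ell(X,E)$. Evaluating on fundamental classes and using $\alpha[\hat X]=(f_*\alpha)[X]$ together with $f_*[\hat X]=[X]$ yields
\[
Ell(\hat X,\hat E)=\Ell(\hat X,\hat E)[\hat X]=\bigl(f_*\Ell(\hat X,\hat E)\bigr)[X]=\Ell(X,E)[X]=Ell(X,E),
\]
which is the final assertion.
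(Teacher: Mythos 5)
Your architecture is the right one, and it is essentially the architecture of the proof this paper delegates to (\cite{BLannals}, Theorem 3.5): determine the discrepancy $\delta$ of $Exc(f)$, reduce the push-forward identity to a fibre integral over $Exc(f)=\PP(N_{Z/X})\to Z$, and establish the resulting identity of theta-quotients using double quasi-periodicity and residues on $\CC/(\ZZ+\ZZ\tau)$. Your computation $\delta=(c-1)+\sum_{Z\subseteq E_k}\delta_k$, with $m_k\in\{0,1\}$ forced by transversality and the bound $\delta>(c-1)-s\ge -1$ coming from $s\le\codim Z$ (the $s$ components through $Z$ cross normally), is correct and is exactly the first step of the cited argument.

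The genuine gap is that the entire analytic content of the theorem is deferred rather than proved. The sentence in which you say the claim ``reduces to the statement that a certain elliptic function of the $\PP^{c-1}$-variable has vanishing total residue'' and that this is ``the step I expect to be the most delicate'' is the theorem; everything before it is formal. What is needed, and what \cite{BLannals} actually supplies in the lemma preceding their Theorem 3.5, is an \emph{explicit} identity equating a sum of products of factors of the shape appearing in (\ref{ellgenpairsformula}) --- evaluated at the Chern roots of $N_{Z/X}$ and the tautological class of $\PP(N_{Z/X})$, with the shifted argument $-(\delta+1)z$ attached to $Exc(f)$ --- to the corresponding product for $(X,E)$ restricted to $Z$; it is proved by checking that the difference of the two sides, as a function of one elliptic variable, is doubly periodic (this is precisely where the value $\delta=(c-1)+\sum_{Z\subseteq E_k}\delta_k$ is forced), that the residues at its simple poles cancel so it has no poles, and that it vanishes at a special point, hence identically. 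Until that identity is written down and verified, you have a plan, not a proof. A secondary point: writing $\Ell(\hat X,\hat E)-f^*\Ell(X,E)=i_*(\gamma)$ via the localization sequence does not reduce the work, because $i_*$ is not injective and $p_*\gamma$ is not determined by $i_*\gamma$; you must in any case compute the difference explicitly in the presentation of $A^*(\hat X)$ as $f^*A^*(X)$ plus classes supported on $Exc(f)$, which is the same computation as the direct fibre integral. (Also, the general center is reduced in \cite{BLannals} to the case where $Z$ is an intersection of components of $E$ by adjoining auxiliary components with coefficient $0$, not to the case of a point.)
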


\begin{corollary} Let $X$ be a $\QQ$-Gorenstein projective variety 
with at most klt singularities. Let $\hat X \rightarrow X$ be a
resolution of its singularities and $\hat E=\sum \alpha_k \hat E_k$ be 
a normal crossing divisor on $\hat X$ such that $f^*(K_X)=K_{\hat X}+\hat E$.
Then $Ell(\hat X,\hat E)$ depends only on $X$ i.e. is independent of a choice of $(\hat X,\hat
E)$ (and called ({\it singular}) elliptic genus of $X$). It  will be
denoted $Ell_{sing}(X)$.
\end{corollary}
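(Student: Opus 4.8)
The plan is to deduce the corollary from the blowup invariance of Theorem \ref{blowup} together with the weak factorization theorem, which lets one connect any two resolutions by a chain of blowups and blowdowns along smooth centers. First I would make the discrepancy divisor intrinsic. Since $X$ is $\QQ$-Gorenstein, $K_X$ is $\QQ$-Cartier, so for any smooth model $W$ equipped with a birational map $\pi_W\colon W\dashrightarrow X$ that is an isomorphism over the smooth locus $X\setminus \mathrm{Sing}(X)$ there is a well-defined discrepancy divisor $E_W$, supported over $\mathrm{Sing}(X)$ and characterized (in the sense of the minimal model program, where discrepancies are defined for any divisor over $X$) by $K_W+E_W=\pi_W^*K_X$; the klt hypothesis says precisely that all its coefficients exceed $-1$. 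For the two chosen resolutions $(\hat X,\hat E)$ and $(\hat X',\hat E')$ the divisors $\hat E$ and $\hat E'$ are exactly the discrepancy divisors in this sense, so it suffices to prove that $Ell(W,E_W)$ is the same for all such $(W,E_W)$.

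Next I would apply weak factorization. Given two such smooth models, the birational map between them is an isomorphism over $X\setminus\mathrm{Sing}(X)$, and by the divisorial form of the weak factorization theorem (Abramovich--Karu--Matsuki--W{\l}odarczyk) it factors into a chain $W=W_0\dashrightarrow W_1\dashrightarrow\cdots\dashrightarrow W_N=W'$ in which each step is a blowup or a blowdown along a smooth center $Z_i$ transversal to the (simple normal crossing) discrepancy divisor $E_{W_i}$ and lying over $\mathrm{Sing}(X)$. At each step Theorem \ref{blowup} applies: if $g\colon W_{i+1}\to W_i$ is the blowup with center $Z_i$, then $E_{W_{i+1}}$ is again the discrepancy divisor of $W_{i+1}$ (since $K_{W_{i+1}}+E_{W_{i+1}}=g^*(K_{W_i}+E_{W_i})=\pi_{W_{i+1}}^*K_X$), its new exceptional coefficient satisfies $\delta>-1$ so the klt range is preserved, and
\begin{equation*}
Ell(W_{i+1},E_{W_{i+1}})=Ell(W_i,E_{W_i}).
\end{equation*}
Because Theorem \ref{blowup} asserts equality of these numbers, the same identity holds at blowdown steps as well. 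Composing along the chain yields $Ell(W,E_W)=Ell(W',E_{W'})$, and in particular $Ell(\hat X,\hat E)=Ell(\hat X',\hat E')$.

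The main obstacle is the compatibility between the combinatorics of weak factorization and the divisorial hypotheses of Theorem \ref{blowup}. One must invoke the version of weak factorization that respects a prescribed simple normal crossing divisor, so that every intermediate center $Z_i$ is transversal to the components of $E_{W_i}$ and every $E_{W_i}$ remains simple normal crossing; one must also verify that the divisor produced by each blowup coincides with the intrinsic discrepancy divisor of the new model, so that the coefficient bound $>-1$ persists throughout the chain and Theorem \ref{blowup} is applicable at every step. This last point is what allows the intermediate $W_i$ to enter the argument even though the maps $\pi_{W_i}\colon W_i\dashrightarrow X$ need only be rational rather than morphisms. Once this bookkeeping is in place the invariance is immediate.
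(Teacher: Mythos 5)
Your argument is correct and is exactly the intended one: the paper derives this corollary from Theorem \ref{blowup} via the weak factorization theorem (as in \cite{Bduke} and \cite{BLannals}), connecting any two resolutions by a chain of blowups and blowdowns with smooth centers transversal to the accumulated normal crossing divisors and applying the blowup invariance at each step. Your added bookkeeping — that each intermediate divisor is the intrinsic discrepancy divisor, so the klt bound $>-1$ persists along the chain — is precisely the point that makes the induction go through.
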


The fundamental relation between singular and orbifold elliptic genera
is given by the so called {\it MacKay correspondence for elliptic genus}:

\begin{theorem}  Let $X$ be a smooth projective variety on 
which a group $\Gamma$ acts effectively via biholomorphic
transformations. 
Let $\mu: X \rightarrow X/\Gamma$ be the quotient map. Assume that $\mu$ does 
not have ramification divisors i.e.fixed points of elements of
$\Gamma$ have codimension greater than one.
 Then
\begin{equation}
 \Ell_{orb}(X,\Gamma; z, \tau) = ({{2  \pi \sqrt{-1} \theta(-z,\tau)}
   \over {\theta′(0, τ)}})^{\rm dim X} \Ell_{sing}(X/\Gamma, z, \tau)
 \end{equation}
In particular, orbifold elliptic genus coincides with the elliptic
genus 
of any crepant resolution (if such exist).
\end{theorem}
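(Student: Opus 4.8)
The plan is to prove the identity by reducing the global statement to a local computation near the non-free locus of the $\Gamma$-action and then establishing a theta-function identity there, using the blowup invariance of Theorem~\ref{blowup} (in the $\Gamma$-equivariant, orbifold form alluded to in its statement) as the engine that makes the reduction legitimate. First I would observe that both sides are computed by localization: the orbifold side (\ref{commutingpairselliptic}) is supported on the fixed loci $X^{g,h}$, while the singular side is supported near the image of the non-free locus; away from that locus $\mu$ is \'etale and both sides collapse to the ordinary elliptic genus (\ref{formula2}) of the free quotient, matched exactly by the stated prefactor $(2\pi\sqrt{-1}\,\theta(-z,\tau)/\theta'(0,\tau))^{\dim X}$, which is nothing but the normalization discrepancy between the characteristic series in (\ref{ellgenpairsformula}) and (\ref{commutingpairselliptic}). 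Thus it suffices to work in a neighborhood of the non-free locus, i.e.\ on the local model $\CC^n/G$ where $G$ is a finite subgroup with $\codim X^g \ge 2$ for all $g$.

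The second step is the reduction machinery. I would resolve $X/\Gamma$ by a sequence of blowups with smooth centers, lifting each to a $\Gamma$-equivariant blowup of $X$. By the corollary to Theorem~\ref{blowup}, $Ell_{sing}(X/\Gamma)$ is independent of the chosen resolution, and by the orbifold strengthening of Theorem~\ref{blowup} the orbifold class $Ell_{orb}(X,\Gamma)$ is unchanged under equivariant blowup. Consequently it is enough to verify the asserted equality on a single convenient model, or equivalently to run an induction whose inductive step strictly simplifies the stabilizer stratification (lowering dimensions of fixed loci or passing to smaller stabilizers), terminating at the free locus where the equality is the definitional identity discussed above. The no-ramification hypothesis is what keeps us inside the klt/pairs formalism throughout: it guarantees $X/\Gamma$ is $\QQ$-Gorenstein klt and that every discrepancy satisfies $\alpha_k>-1$, so that (\ref{ellgenpairsformula}) is defined at each stage.

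The main obstacle is the base case, namely the local identity for a diagonal abelian action on $\CC^n$: one must show that the sum over commuting pairs $(g,h)\in G\times G$ in (\ref{commutingpairselliptic}) equals, up to the stated prefactor, the elliptic class of pairs (\ref{ellgenpairsformula}) of a toric resolution $W\to\CC^n/G$ whose discrepancies are read off from the fan. This is genuinely a theta-function identity, and the crux of it is the bookkeeping that matches the fermionic shifts $F(g,X^g)=\sum\lambda(g)$ together with the exponentials $e^{2\pi\sqrt{-1}z\,\lambda(h)}$ appearing on the orbifold side with the discrepancy shifts $(\alpha_k+1)z$ appearing in the divisor factors of (\ref{ellgenpairsformula}) on the resolution side. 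I would prove it by re-summing the group sum over the character lattice of $G$, identifying the maximal cones of the toric resolution with the chambers of the corresponding lattice decomposition, and then invoking the quasi-periodicity and addition (three-term) relations for $\theta$; this reduces the $n$-dimensional statement to a product of one-dimensional cyclic-quotient identities, each of which is the Jacobi-form identity underlying McKay correspondence in the surface case. Verifying that the $y^{F(h,X^h)}$ factors assemble \emph{precisely} into the $(\alpha_k+1)z$ shifts is the delicate point, and it is exactly here that $\codim X^g\ge2$ is indispensable.

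Finally, having established the local identity, the global equality follows by assembling the contributions over the stratification, the prefactor entering as a single global $(\dim X)$-th power independent of the strata. For the concluding assertion, if a crepant resolution $\pi\colon \tilde Y\to X/\Gamma$ exists then all discrepancies vanish, so the divisor $\hat E=0$ and the product over $k$ in (\ref{ellgenpairsformula}) disappears, reducing the elliptic class of the pair to (\ref{formula2}) up to the same normalization factor $(2\pi\sqrt{-1}\,\theta(-z,\tau)/\theta'(0,\tau))^{\dim X}$. Combined with the main identity this yields $Ell_{orb}(X,\Gamma;z,\tau)=Ell(\tilde Y)$, proving that the orbifold elliptic genus coincides with the elliptic genus of any crepant resolution.
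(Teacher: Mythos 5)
You should first note that the paper does not actually prove this theorem: it states it and refers to \cite{BLannals}, Theorem 5.3, for the proof (and for the more general version with ramification divisors). So your sketch has to be measured against the argument of that reference. At the level of ingredients you have correctly identified the two pillars of that proof --- functoriality of the elliptic class of pairs under blowups (Theorem \ref{blowup} and its orbifold refinement) and a local toric/theta-function identity for abelian quotient singularities --- and you correctly recognize that the base case is where the real work lies and that the prefactor is a normalization constant relating (\ref{ellgenpairsformula}) to (\ref{formula2}).

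However, three steps of your reduction are genuinely flawed. First, the opening ``localization'' argument does not make sense: both sides of the identity are global characteristic numbers (evaluations of cohomology classes on fundamental classes), and neither is ``supported near the non-free locus''; there is no meaning to ``both sides collapse on the \'etale locus,'' and consequently no induction can ``terminate at the free locus.'' The prefactor $\bigl(2\pi\sqrt{-1}\,\theta(-z,\tau)/\theta'(0,\tau)\bigr)^{\dim X}$ is a pure bookkeeping discrepancy between the characteristic series in (\ref{ellgenpairsformula}) and (\ref{commutingpairselliptic}); it is not derived from any matching over the free part. Second, your reduction runs the blowups in the wrong direction: a blowup of $X/\Gamma$ along a smooth center does not lift to a $\Gamma$-equivariant blowup of $X$ along a smooth center (the preimage of the center need not be smooth, and the fiber product is not a blowup), so the two towers cannot be matched term by term. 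The actual argument equivariantly blows up $X$ upstairs until the $\Gamma$-action is toroidal ($G$-normal in the terminology of \cite{BLannals}), observes that the resulting $X'/\Gamma$ then computes $Ell_{sing}(X/\Gamma)$ by the corollary to Theorem \ref{blowup}, and proves the identity directly across the quotient map $X'\rightarrow X'/\Gamma$ by the local computation. Third, the claim that the local abelian identity factors into one-dimensional cyclic-quotient identities is false: $\CC^n/G$ for a finite abelian $G$ acting diagonally is not a product of curve singularities and its toric resolution is not a product of resolutions; the identity one must prove is a sum over the maximal cones of the resolution fan matched against the lattice (box) elements indexing the pairs $(g,h)$, and it is irreducibly $n$-dimensional. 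Without a correct formulation of this base case and of the roof connecting the two sides, the sketch does not assemble into a proof.
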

We refer to \cite{BLannals} theorem 5.3 for a more general statement in the
category of Kawamata log-terminal pairs and for the case of quotients
maps admitting ramification divisors.


An immediate corollary is reinterpretation of the series in theorem
\ref{dmvv} in case when $dim X=2$, 
as the generating series of the elliptic genera of Hilbert schemes:
\begin{corollary} Let $X$ be a smooth projective surface and
  $Ell(X)=\sum_{m,l}q^my^l$.
Then 
\begin{equation}
\sum  p^nEll(Hilb_n,q,y) =
\Prod_{i=1}^{\infty}\Prod_{m,l} {1 \over {(1-p^iq^my^l)^{c(mi,l)}} }
 \end{equation}     
\end{corollary}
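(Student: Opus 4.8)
The plan is to identify $Hilb_n(X)$ as a crepant resolution of the symmetric product $X^n/\Sigma_n$ and then reduce the assertion to the symmetric-product formula of Theorem \ref{dmvv} by means of the McKay correspondence for the elliptic genus. Throughout I read the statement with its coefficients restored, $Ell(X)=\sum_{m,l} c(m,l)\,q^m y^l$, consistently with Theorem \ref{dmvv}.

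First I would recall the two geometric facts about Hilbert schemes of points on a smooth surface that make the argument work and that are special to $\dim X=2$. By Fogarty's theorem the Hilbert scheme $Hilb_n(X)$ is smooth of dimension $2n$, and the Hilbert--Chow morphism $\pi_n\no Hilb_n(X)\rightarrow X^n/\Sigma_n$ is a resolution of the (klt, indeed Gorenstein canonical) quotient singularities of the symmetric product, so that $Ell_{sing}(X^n/\Sigma_n)$ is defined. Moreover this resolution is crepant, $\pi_n^*K_{X^n/\Sigma_n}=K_{Hilb_n(X)}$, so that in the notation of the corollary after Theorem \ref{blowup} the discrepancy divisor $\hat E$ vanishes. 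I would then verify the hypotheses of the McKay correspondence theorem for the action of $\Gamma=\Sigma_n$ on $X^n$: the action is effective, and because $\dim X=2$ the fixed-point locus of any non-identity permutation (a partial diagonal) has codimension $\geq 2$, so the quotient map $\mu\no X^n\rightarrow X^n/\Sigma_n$ has no ramification divisor. This is exactly the input required by the McKay correspondence as stated.

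Next I would invoke the McKay correspondence for elliptic genus. Since $\hat E=0$, the singular elliptic class of $X^n/\Sigma_n$ coincides with the ordinary elliptic class of the crepant resolution $Hilb_n(X)$, whence $Ell_{sing}(X^n/\Sigma_n)=Ell(Hilb_n(X))$; combined with the McKay identity this yields, for every $n$,
$$
Ell(Hilb_n(X); y,q)=Ell_{orb}(X^n,\Sigma_n; y,q).
$$
Equivalently, one may quote directly the last sentence of the McKay theorem: the orbifold elliptic genus equals the elliptic genus of any crepant resolution, here $Hilb_n(X)$. Substituting this identity term by term into the generating series of Theorem \ref{dmvv} gives
$$
\sum_n p^n Ell(Hilb_n(X); y,q)=\sum_n p^n Ell_{orb}(X^n,\Sigma_n; y,q)=\Prod_{i=1}^{\infty}\Prod_{m,l}{1 \over {(1-p^iq^my^l)^{c(mi,l)}}},
$$
which is the claimed formula.

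The formal part of the argument---passing from Theorem \ref{dmvv} to the statement---is immediate once the identification $Ell(Hilb_n(X))=Ell_{orb}(X^n,\Sigma_n)$ is in hand, so the only real content lies in the geometric input of the second paragraph. The point I expect to need the most care with is the crepancy of the Hilbert--Chow morphism, together with the verification that the symmetric-product singularities are klt so that $Ell_{sing}$ is defined at all; both are classical for surfaces but both genuinely use $\dim X=2$, which is precisely why the corollary is stated only for surfaces. Everything else is a direct application of the two theorems already established.
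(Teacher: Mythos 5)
Your proof is correct and follows essentially the same route as the paper: the paper's own justification is the one-line observation that for surfaces the Hilbert--Chow morphism $Hilb_n(X)\rightarrow X^n/\Sigma_n$ is a crepant resolution, so the McKay correspondence identifies $Ell(Hilb_n(X))$ with $Ell_{orb}(X^n,\Sigma_n)$ and Theorem \ref{dmvv} gives the product formula. You have merely spelled out the smoothness, klt, crepancy, and no-ramification-divisor checks that the paper leaves implicit.
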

Indeed, in the case of the surfaces the morphism $Hilb_n(X)
\rightarrow X^n/\Sigma_n$ is a smooth crepant resolution 
(i.e. $\alpha_k=0$ in definition \ref{defsingularities}).

\section{Equivariant elliptic genus}

In this section we discuss an equivariant version of the 
elliptic genus. In particular we shall 
describe equivariant analog of push forward formula (i.e. theorem 
\ref{blowup}) for 
 elliptic class, equivariant McKay correspondence,
equivariant localization and push forward properties of 
the contributions of compact components of fixed point sets into elliptic class. 
Our approach is based on equivariant intersection theory 
as developed in \cite{edidin} (cf. also \cite{totaro}). 
It allows to derive equivariant results
from their non-equivariant counterparts, already discussed in section \ref{pairs}, 
applied in appropriately formulated
context.
As in \cite{edidin} and \cite{BLannals}, instead of ordinary cohomology, we work in  
 Chow theory, but a reader of course can interpret all statements 
as those in ordinary cohomology. 

\subsection{Equivariant intersection theory}\label{general}

We start with working in the category of quasi-projective  
normal varieties (over $\CC)$ with various assumptions on
singularities such as  $\QQ$-Gorenstein
and klt conditions (cf. section \ref{pairs}). 
We also assume that a reductive algebraic group $G,\ 
{\rm dim} G=g$ 
acts on such $X$ via a linearized action. The latter means that an ample line bundle $L$ 
is presented on $X$ together with a $G$-action on the total space of $L$ such that 
bundle projection on $X$ is equivariant (cf. \cite{GIT}).
We shall refer to \cite{edidin} Section 6 for precise conditions 
on the action which assure that constructions, 
needed for equivariant intersection theory to run, will work.

Let $V, {\rm dim} V=l$ be a representation of $G$ and $U \subset V$ 
is an open set such that $G$ acts on $U$ freely and 
${\rm codim} V\setminus
U$ is sufficiently large.
Then $U/G$ is smooth and for a given $n$, the Chow groups
$A^{n'}(U/G)=A_{l-g-n'}(U/G)$ are well defined for $l>>n$,  
and so are  the products among them for all $n'<n$. 
 The Chow ring $A^*(BG)$ is defined 
as the graded ring having $A^n(U/G)$ for $n<<l$ as its graded components:
again, those are independent of $l$ as long as $l$ is large enough.

Since $G$ acts freely on $U$, the diagonal $G$-action on $X \times U$ 
is free as well, the quotient space $X_G=(X \times U)/G$ 
does exist and equivariant Chow group $A_i^G(X)$ can be defined 
as the usual Chow group $A_{i+l-g}(X_G)$.  Again, it is independent of $V,U$
as long as ${\rm codim} V\setminus U$ is sufficiently large
  (cf. \cite{edidin} Prop.-Def). The intuition behind such choice of indices is that 
in the case when $X$ is smooth, projective and the quotient 
$U/G$ is compact, one has $dim X_G={\rm dim} X+l-g$ and 
 $A_{i+l-g}(X_G)=A^{dim X-i}(X_G)$ by Poincare duality.

Let $E$ be an equivariant $G$-bundle on a quasi-projective variety with
action of $G$ i.e. the total space of $E$ is
endowed with $G$-action such that projection $E \rightarrow X$ is
$G$-equivariant. Then $E_G=(E \times U)/G \rightarrow X_G$ is a vector 
bundle on $X_G$ and equivariant Chern class $c^G_j \in A^G_*(X)$ is the Chern class
of the vector bundle $E_G$ on $X_G$. As in non-equivariant case, one
associates with an equivariant bundle the (equivariant) Chern roots $x_i^G \in A_*(X_G)$.

To define equivariant elliptic class, we note that the map $\pi$ induced by projection on the second factor: 
\begin{equation} \ \  X_G=(X \times U)/G \buildrel \pi \over
  \longrightarrow U/G=BG
\end{equation}  is 
a locally trivial fibration with the fiber $X$.
\begin{dfn}\label{equivargenusdef} Let $X$ be a {\it smooth projective}
  variety with an action of algebraic group $G$.
 The equivariant elliptic genus of $(X,G)$ is the push
  forward of the equivariant elliptic class i.e. the  class
  (\ref{formula2}) where $x_i$ are the equivariant Chern roots of the
  tangent bundle of $X$ with its natural $G$-structure:
\begin{equation}\label{equivarellgen}
  Ell^G(X)= \pi_*(\Ell(X_G)) \in A_*(BG) \otimes QJac
\end{equation} 
where $\pi: X_G  \rightarrow BG$ is induced by projection of $X \times
U$ on the second factor and $QJac$ is the ring of quasi-Jacobi forms
i.e. the ring of functions on $\CC \times H$ 
generated by coefficients of Taylor expansion in $x$ of a factor in
the product (\ref{formula2}) 
(cf. Appendix II)
\footnote{as in (\ref{ellgenpairsformula}) one can use any ring 
of functions containing the coefficients of expansion of elliptic
genera of manifolds in Chern classes.}.
\end{dfn}

By equivariant Riemann Roch theorem, one can interpret
(\ref{equivarellgen}) as the character 
decomposition of holomorphic euler characteristic of the
$G$-equivariant bundle (\ref{formula2}) where $T_X,\Omega^1$ endowed 
with natural $G$-structure (cf. \cite{edidinrr}).

In the case when $G$ is a torus $T$ (affine
connected commutative algebraic group) of dimension $r$, the equivariant 
elliptic class in $A^*(BT,QJac)$
can be viewed as an element of the ring of polynomials in $r$ variables 
with coefficients in the ring of quasi-Jacobi forms (cf. Appendix II).

\subsection{Equivariant localization.} Let $T$ be a torus 
acting algebraically on a smooth 
quasiprojective
scheme $X$. Let $\hat T$ be the group of characters of $T$. An
identification $T={\CC^*}^r$ induces the identification 
of $\hat T$ with a free abelian group generated by character $t_1,...,t_r
\in \hat T$
(such that $t_i(z_1,....,z_i,...z_r)=z_i \in \CC^*$).
$T$-equivariant Chow ring of a point, i.e. $A^*(BT)$, as was already
mentioned, is isomorphic to 
the symmetric algebra of free abelian group $\hat T$. More generally, 
if $T$ acts trivially on $X$ then $A^T_*(X)=A_*(X) \otimes Sym(\hat
T)$ (here $Sym(\hat T)$ is the symmetric algebra with generators $t_1,...,t_r$;
cf. \cite{edidinamerjourn}). For details of the following we
refer to \cite{edidinamerjourn}.
\begin{theorem}\label{localizationtheorem}
  Let $R_T=Sym(\hat T)$, $\Q_T=(R_T^+)^{-1}R_T$ where 
$R_T^+$ is the semigroup of elements of positive degree and $i: X_T
\rightarrow X$ be the embedding of the fixed point set.
Then 
\begin{equation}\label{localizationformula} 
   i_*: A_*(X_T)\otimes \Q_T \rightarrow A^T_*(X)\otimes \Q_T
\end{equation}
is an isomorphism.
\end{theorem}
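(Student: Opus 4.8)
The plan is to follow the strategy of Edidin--Graham \cite{edidinamerjourn}: deduce the isomorphism from a single vanishing statement about fixed-point-free actions, combined with the excision sequence and the self-intersection formula. I work with rational coefficients, which is all that the elliptic genus requires (the finite stabilizers that occur are invisible rationally). The fixed locus $X_T$ is closed in $X$, and since $T$ is linearly reductive and $X$ is smooth, $X_T$ is itself smooth; thus $i:X_T \to X$ is a regular closed embedding with $T$-equivariant normal bundle $N=N_{X_T/X}$. Because $T$ acts trivially on $X_T$ one has $A^T_*(X_T)=A_*(X_T)\otimes R_T$, so after localizing $A^T_*(X_T)\otimes\Q_T=A_*(X_T)\otimes\Q_T$, matching the left-hand side in the statement.

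The heart of the matter is the following vanishing lemma: if $T$ acts on a quasiprojective scheme $Y$ with $Y_T=\varnothing$, then $A^T_*(Y)\otimes \Q_T=0$. I would prove it by Noetherian induction on $Y$ via the excision sequence $A^T_*(Y\setminus V)\to A^T_*(Y)\to A^T_*(V)\to 0$: it suffices to produce one nonempty $T$-invariant open $V\subseteq Y$ whose localized equivariant Chow groups vanish, since then the closed complement has smaller dimension and still no fixed points. To produce $V$, pass to a component and shrink so that the identity component $S$ of the generic stabilizer is constant; as $Y_T=\varnothing$ we have $S\neq T$. If $\dim S>0$, then $S$ acts trivially on $V$, so rationally $A^T_*(V)\cong A^{T/S}_*(V)\otimes \Sym(\hat S)$, while $V^{T/S}=\varnothing$ (a $T/S$-fixed point would be $T$-fixed); induction on $\dim T$ then annihilates $A^{T/S}_*(V)$ already after inverting $R_{T/S}^+\subseteq R_T^+$. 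If $S$ is finite, shrink further so that $T$ acts with finite stabilizers and the geometric quotient $V/T$ exists; rationally $A^T_*(V)\cong A_*(V/T)$, and the $R_T=A^*(BT)$-module structure factors through $A^*(BT)\to A^*(V/T)$. Since $V/T$ is finite dimensional, every positive-degree class in $A^*(V/T)$ is nilpotent, so every element of $R_T^+$ acts nilpotently and $A^T_*(V)\otimes\Q_T=0$. This closes the induction.

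Granting the lemma, I apply the equivariant excision sequence to $X_T\hookrightarrow X$ with open complement $U=X\setminus X_T$:
\[
A^T_*(X_T)\xrightarrow{\,i_*\,}A^T_*(X)\xrightarrow{\,j^*\,}A^T_*(U)\longrightarrow 0 .
\]
After $\otimes\,\Q_T$ the right-hand term vanishes by the lemma (as $U_T=\varnothing$), so $i_*$ is surjective. For injectivity I use the equivariant self-intersection formula $i^*i_*(\alpha)=e^T(N)\cap\alpha$ on $A^T_*(X_T)\otimes\Q_T$. Here $N$ carries only nonzero $T$-weights, its zero-weight part being tangent to $X_T$; writing the equivariant Chern roots of $N$ as (ordinary root)$+\chi$ with $\chi\neq0$ gives $e^T(N)=\bigl(\prod_\chi \chi^{\,n_\chi}\bigr)(1+\nu)$ with $\nu$ nilpotent, since ordinary Chern classes are nilpotent on the smooth finite-dimensional $X_T$. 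The leading factor lies in $R_T^+$ and is inverted in $\Q_T$, so $e^T(N)$ is a unit in $A^*(X_T)\otimes\Q_T$; hence $\cap\,e^T(N)$ is an isomorphism and $i_*$ is split injective. Together with surjectivity this yields the isomorphism.

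I expect the formal part---excision, the self-intersection formula, and the invertibility of the equivariant Euler class---to be routine once $X_T$ is known to be smooth. The genuine obstacle is the vanishing lemma: arranging the Noetherian induction so that a single generic stratum can always be split off, and, within it, cleanly separating the positive-dimensional-stabilizer case (which lowers $\dim T$) from the finite-stabilizer base case (where $R_T^+$ acts nilpotently through the finite-dimensional quotient $V/T$). Establishing the requisite genericity of the stabilizer group scheme and the existence and properties of $V/T$ is where the argument must be carried out with care.
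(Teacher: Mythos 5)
Your argument is correct and is essentially the proof of Edidin--Graham \cite{edidinamerjourn}, which is exactly the source the paper defers to for this statement (the paper gives no independent proof): the vanishing of localized equivariant Chow groups for fixed-point-free actions, excision for surjectivity, and invertibility of the equivariant Euler class of the normal bundle via the self-intersection formula for injectivity. Note only that your use of the self-intersection formula relies on $X$ being smooth, which is indeed assumed in the paper's setting and is also how the explicit inverse (\ref{localizationformula2}) is obtained there.
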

 If $Y$ and $X$ are smooth, $j: Y \rightarrow
 X$ is a regular embedding of codimension $d$,  
 $N$ is the normal bundle of $Y$ in $X$ and $\alpha \in A_*(Y)$, one has 
{\it the self intersection formula} $j^*j_*(\alpha)=c_d(N) \cap \alpha$ 
(cf. Sect. 6.3, Cor. 6.3 \cite{fulton}).
If $F$ is fixed point set of a torus $T$ acting on a smooth 
scheme $X$, then $F$ is smooth and self intersection formula 
applied to $i_F: F \times U/G \rightarrow X \times U/G$ implies  
$i_F^*{i_F}_*(\alpha)=c^T_d(N) \cap \alpha$. This results in an explicite
localization isomorphism:
\begin{equation}\label{localizationformula2}
     A^T_*(X) \otimes \Q_T \rightarrow  A_*(X_T)\otimes \Q_T :\ \ \ \
     \beta \rightarrow {{i_F^*(\beta)} \over {c^T_d(N)}}\end{equation}
(here $c^T_d(N)$ denotes the equivariant Chern class of the normal
bundle to the fixed point set).

\subsection{Push forward of equivariant elliptic class and 
equivariant McKay correspondence}

Above approach to equivariant intersection theory allows to deduce
directly the equivariant counterparts of the 
key results about elliptic genus: the push forward formula of elliptic
class and the McKay correspondence. A different derivation of these
properties was given in \cite{waelder}.

Let $\bar X$ be a smooth projective variety with a biholomorphic  action of 
a torus $T$. Let $E=\sum \alpha_i E_i$ be a normal crossings divisor on $\bar X$ such that 
all irreducible components $E_i$ are $T$-invariant.  Then (in notations of section
\ref{general}) $(E_i \times U)/T$ is a divisor
on $(\bar X \times U)/T$ and hence the classes $e_i^T \in A^T_*(\bar X)$
are well defined. Using (\ref{ellgenpairsformula}) we
obtain {\it the equivariant elliptic class} $\Ell^T(\bar X,E) \in A^T_*(\bar X,QJac)$. 

\begin{theorem}\label{pushforward} ({\rm Push forward formula}.)
Let $\bar X$  be a smooth projective variety 
with a torus 
$T$ acting on $\bar X$ via biregular automorphisms. Let $E$ be a
$T$-invariant normal crossings divisor and $Z$ a smooth $T$-invariant submanifold of $\bar X$ 
transversal to all irreducible components of $E$. Let $\phi: \bar X'
\rightarrow \bar X$ be the blow up of $\bar X$ with center at $Z$ and
let $E'$ be 
the divisor on $\bar X'$ such that $\phi^*(K_{\bar X}+E)=K_{\bar
  X'+E'}$. 
Then the action of $T$ on $\bar X\setminus Z$ extends to the action 
on $\bar X'$ leaving $E'$ invariant and 
\begin{equation}\label{pushforwardfla}
   \phi_*(\Ell^T(\bar X',E'))=\Ell^T(\bar X,E)
\end{equation}
where on the left one has the equivariant elliptic class for the
action on $\bar X'$ induced by the action of $T$ on $\bar X \setminus Z$.
\end{theorem}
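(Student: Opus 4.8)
The plan is to deduce the equivariant push-forward formula from its non-equivariant counterpart, Theorem \ref{blowup}, by running the entire argument on the Borel-type approximation spaces $X_G = (X \times U)/G$ rather than on $X$ itself. First I would fix a representation $V$ with an open set $U \subset V$ on which $T$ acts freely and with $\codim (V \setminus U)$ large (larger than $\dim \bar X + 1$, so that all the Chow groups in play are in the stable range). The crucial observation is that every ingredient of the hypothesis is $T$-invariant, so the blow-up construction descends to the quotient: since $Z$ is a $T$-invariant smooth submanifold, $Z \times U \subset \bar X \times U$ is $T$-invariant, the quotient $(Z \times U)/T \subset (\bar X \times U)/T = \bar X_T$ is a smooth submanifold, and the blow-up of $\bar X_T$ along this center is canonically identified with $(\bar X' \times U)/T = \bar X'_T$. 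This is where one checks that blowing up commutes with the free quotient and with the fibration over $BT$; the identification of exceptional divisors $Exc(\phi)_T = Exc(\phi_T)$ and of proper transforms $(\hat E_k)_T$ follows from the same principle.

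Next I would verify that the divisor-theoretic and canonical-class data transport correctly to the approximation space. The relation $\phi^*(K_{\bar X} + E) = K_{\bar X'} + E'$ must be promoted to the relative statement $\phi_T^*(K_{\bar X_T/BT} + E_T) = K_{\bar X'_T/BT} + E'_T$ on the fibers of $\pi$; since $\pi$ is a locally trivial fibration with fiber $\bar X$, the relative canonical bundle restricts on each fiber to the ordinary $K_{\bar X}$, so the discrepancy coefficients $\alpha_i$ and $\delta$ are unchanged, and in particular the inequality $\delta > -1$ that makes the elliptic class of the pair defined is inherited verbatim from Theorem \ref{blowup}. At this point the spaces $\bar X_T$ and $\bar X'_T$ are smooth projective varieties (after restricting to a projective approximation, or working fiberwise), the divisors $E_T, E'_T$ are normal crossings, and $(\bar X_T, E_T)$, $(\bar X'_T, E'_T)$ are pairs to which the non-equivariant Theorem \ref{blowup} literally applies.

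The heart of the argument is then a single invocation: by definition of the equivariant elliptic class, $\Ell^T(\bar X, E)$ is represented by $\Ell(\bar X_T, E_T)$ under the identification $A^T_*(\bar X) = A_{*+l-g}(\bar X_T)$, and likewise for the primed data, because the equivariant Chern roots $x_i^T$ and the equivariant divisor classes $e_i^T$ are by construction the ordinary Chern roots and divisor classes on the approximation space. The blow-down $\phi_T : \bar X'_T \to \bar X_T$ is the morphism inducing the equivariant $\phi_*$. Applying Theorem \ref{blowup} to the pair $(\bar X_T, E_T)$ with center $(Z \times U)/T$ gives ${\phi_T}_*(\Ell(\bar X'_T, E'_T)) = \Ell(\bar X_T, E_T)$, which is exactly (\ref{pushforwardfla}) after translating back through the Edidin--Graham identification. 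The main obstacle, and the step deserving the most care, is the first one: confirming that the blow-up, the exceptional and proper-transform divisors, the transversality of the center to the components of $E$, and the discrepancy relation all pass cleanly through the free quotient by $T$ and remain compatible as the representation $V$ grows. Once the diagram
\begin{equation}
\bar X'_T \buildrel \phi_T \over \longrightarrow \bar X_T
\end{equation}
is established as an honest blow-up of pairs in the stable range, the elliptic-genus identity is a formal consequence and requires no new computation beyond Theorem \ref{blowup}.
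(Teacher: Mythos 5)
Your proposal is correct and follows essentially the same route as the paper: both pass to the Edidin--Graham approximation spaces, identify $\bar X'_T$ with the blow-up of $\bar X_T$ along $Z_T$, check that the discrepancy data is preserved (the paper does this by comparing multiplicities along $Z_T$ versus $Z$ and codimensions, where you invoke the relative canonical class of the fibration over $BT$ --- a cosmetic difference), and then invoke the non-equivariant Theorem \ref{blowup} on the pair $(\bar X_T, E_T)$. No substantive divergence.
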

\begin{proof}
Let $\pi: \bar X_{T} \rightarrow BT$ be a locally trivial fibration
defined by the action of $T$ and a representation of $T$ as in section
\ref{general} (recall that $BT=U/T$ is the quotient space of a Zariski
open set $U$ in the representation space with sufficiently large codimension of
the complement to $U$). 
Since $Z$ and $E_i$ are $T$-invariant, one has embedding
of fibrations $Z_{T} \rightarrow \bar X_{T}, (E_i)_T \rightarrow
\bar X_T$ of subvarieties of $\bar X_T$ corresponding to $Z$ and $E_i$
compatible with projections 
on $T$.  
Let $\bar X'_{T}=(\bar X' \times U)/T$ and $\phi_{T}: \bar X'_{T}
\rightarrow \bar X_{T}$ be induced morphism. 
$\bar X'_T=(\bar X'\times U)/T$ can be identified with  the blow up 
of $\bar X_T$ along $Z_T$. This can be seen for example from a local
description of blow up as in \cite{voisin} Def. 3.23. 
Moreover,  $E_T=\sum
\alpha_i{(E_i)}_T$, 
the multiplicity of ${(E_i)}_T$ along $Z_T$ is the same as multiplicity
$\beta_i$ of $E_i$ along $Z$ and codimension of $Z_T$ in $\bar X_T$
coincides with the codimension of $Z$ in $\bar X$. It follows that 
${(E'_i)}_T$, which irreducible components are the proper preimages of
${(E_i)}_T$, and the exceptional locus of $\phi_T$ all have the same 
multiplicities as do the corresponding components in $E'$ (cf. \cite{Bduke}
 p.327 and also theorem \ref{blowup}). 
Therefore $\phi_T^*(K_{\bar X_T}+E_T)=K_{\bar X_T'}+E'_T$. 
Now Theorem 3.5 in \cite{BLannals} immediately
implies Theorem \ref{pushforward}. 
 \end{proof}

As in non-equivariant case, push forward formula (\ref{pushforwardfla})
shows that the following definition is independent of resolution it
uses.

\begin{dfn}{\it Equivariant singular elliptic class.}  Let $X$ be a $\QQ$-Gorenstein projective variety 
with at most klt singularities on which a torus $T$ acts by
regular automorphisms. Let $f: \hat X \rightarrow X$ be an 
equivariant resolution of its singularities and $\hat E=\sum \alpha_k \hat E_k$ be 
a normal crossing divisor on $\hat X$ such that $f^*(K_X)=K_{\hat X}+\hat E$.
Equivariant singular elliptic class is defined as 
\begin{equation} \Ell^T_{sing}(X)=f_*(\Ell^T(\hat X,\hat E))
\end{equation}
(it is independent of a choice of equivariant resolution).
Equivariant singular elliptic genus is the push forward of
$\Ell^T_{sing}(X)$  to the Chow ring of the point (cf. Def. \ref{equivarellgen}).
\end{dfn}

In the case when the singular variety is an orbifold with an action of a
torus one has equivariant version of orbifold elliptic class related
to just described equivariant singular elliptic class.

\begin{theorem}\label{mckay}
 ({\rm Equivariant version of McKay correspondence}) 
Let $X$ be a smooth projective variety
with a torus 
$T$ acting on $X$ via biregular automorphisms. Let 
$\Gamma$ be a finite group which action 
commute with the action of $T$. 
Then, for any pair $(g,h) \in \Gamma$ of commuting elements,  
the fixed point locus $X^{g,h}$ is $T$-invariant,   
the class obtained by replacing in elliptic class appearing in
(\ref{orbcommuting} )
the ordinary Chern roots of the bundles $V_{\lambda}$ by the 
equivariant Chern roots of these bundle with natural $T$-structure, and
called the {\bf equivariant orbifold class} of $(X,T,\Gamma)$, satisfies the
following push forward formula. \footnote{Here we consider the full 
elliptic class i.e. for each commuting pair $g,h$ one takes 
the cap product of class obtained by expansion of
$\theta$-functions with the fundamental class  $[X^{g,h}]$. 
This cup product is an element of equivarinat Chow ring of $X^{g,h}$.
The push forward of this cap product to the Chow ring of a point gives the equivariant
orbifold elliptic genus and is an element in the ring of formal power series in characters
of $T$.}
If $\psi: X \rightarrow
X/\Gamma$ is the quotient morphism, then 
\begin{equation}
       \psi_*(\Ell^T_{orb}(X,\Gamma))=\Ell^T_{sing}(X/\Gamma)
\end{equation}
\end{theorem}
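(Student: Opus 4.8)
The plan is to mirror the proof of Theorem \ref{pushforward} and deduce the equivariant identity from the non-equivariant McKay correspondence for elliptic genus (\cite{BLannals}, Theorem 5.3) applied on the Borel mixing space of section \ref{general}, rather than proving it afresh. First I would fix a representation $V$ of $T$ and an open $U\subseteq V$ with free $T$-action and complement of large codimension, and form $X_T=(X\times U)/T$. Since the $\Gamma$-action and the $T$-action on $X$ commute and $\Gamma$ acts trivially on $U$, the $\Gamma$-action on $X\times U$ commutes with the diagonal $T$-action and descends to $X_T$, and one has $X_T/\Gamma=((X/\Gamma)\times U)/T=(X/\Gamma)_T$, so that the mixed quotient map $\psi_T\colon X_T\to (X/\Gamma)_T$ is the fibrewise version of $\psi$. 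Because $T$ acts freely on $U$, the $\Gamma$-fixed loci mix correctly: for commuting $g,h\in\Gamma$ one checks $(X_T)^{g,h}=(X^{g,h})_T$, the point being that $[gx,u]=[x,u]$ forces $gx=tx$ and $u=tu$ for some $t\in T$, whence $t=1$ and $x\in X^{g}$.

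The second step is to identify the two equivariant classes in the theorem with the corresponding relative (fibrewise over $BT$) non-equivariant classes on $X_T$. For each commuting pair the eigenbundle decomposition of $TX|_{X^{g,h}}$ under $\langle g,h\rangle$ mixes to the decomposition of the relative tangent bundle $T_{X_T/BT}|_{(X_T)^{g,h}}$, the equivariant Chern roots of each $V_\lambda$ being exactly the ordinary Chern roots of the mixed bundle $(V_\lambda)_T$; the eigenvalue data $\lambda(g),\lambda(h)$ and the fermionic shifts are unchanged by mixing, so the characteristic-class expression (\ref{commutingpairselliptic}) for $\Ell^T_{orb}(X,\Gamma)$ coincides with the ordinary orbifold elliptic class of the $\Gamma$-variety $X_T$ computed with its relative tangent bundle. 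Likewise an equivariant resolution $f\colon\hat X\to X/\Gamma$ with discrepancy divisor $\hat E$ mixes to a resolution $f_T\colon\hat X_T\to (X/\Gamma)_T$ whose exceptional and proper-transform components carry the same multiplicities, so that the relative relation $f_T^{*}K_{(X/\Gamma)_T/BT}=K_{\hat X_T/BT}+\hat E_T$ holds (quotient singularities are klt, and this, like the codimensions and multiplicities, is preserved fibrewise, exactly as in the multiplicity-matching step of Theorem \ref{pushforward}); hence $\Ell^T_{sing}(X/\Gamma)$ equals the relative $\Ell_{sing}(X_T/\Gamma)$.

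With these identifications in place I would apply the non-equivariant McKay correspondence to the smooth $\Gamma$-variety $X_T$ and its finite quotient $(X/\Gamma)_T$, in its relative form over $BT$, to obtain $\psi_{T*}(\Ell_{orb}(X_T,\Gamma))=\Ell_{sing}(X_T/\Gamma)$; since $\pi\circ\psi_T$ agrees with the projection from $(X/\Gamma)_T$, the projection formula turns this into the asserted equality in $A^T_*$ after pushing to $BT$, and independence of $U$ (for codimension large enough) yields the statement in the equivariant Chow ring. The main obstacle I anticipate is not the formal transport of structures but the fact that the cited non-equivariant correspondence is stated for smooth \emph{projective} varieties, whereas $X_T$ is only quasi-projective: I would handle this by noting that both $\Ell_{orb}$ and $\Ell_{sing}$ are defined through resolutions and the blow-up push-forward formula (Theorem \ref{blowup}), whose proof uses only properness of the blow-downs and of the finite map $\psi_T$, not projectivity of the ambient space; the extra horizontal factor coming from $\pi^{*}T_{BT}$ is a common invertible multiplier on both sides and cancels by the projection formula. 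The remaining care is bookkeeping, namely that all operations are $QJac$-linear and that the fibrewise klt and normal-crossing hypotheses genuinely survive mixing, both of which reduce to the statements already used for Theorem \ref{pushforward}.
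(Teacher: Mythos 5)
Your proposal is correct and follows essentially the same route as the paper: the paper's proof also passes to the Borel mixing space $X_T=(X\times U)/T$, observes that the commuting $\Gamma$-action descends with $(X^{\gamma})_T$ as fixed loci so that $\Ell^T_{orb}(X,\Gamma)=\Ell_{orb}(X_T,\Gamma)$, and then invokes Theorem 5.3 of \cite{BLannals} for the $\Gamma$-action on $X_T$. Your write-up merely supplies details (the identification $X_T/\Gamma=(X/\Gamma)_T$, the mixing of eigenbundles and discrepancies, and the quasi-projectivity caveat) that the paper leaves implicit.
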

\begin{proof}  This  follows from corresponding results in
  \cite{BLannals} as in the proof of theorem \ref{pushforward}.
Since the actions of $T$ and $\Gamma$ commute, the torus $T$ acts 
on $X^{\gamma},\gamma \in \Gamma$, the action of $\Gamma$ on $X$
 induces the action of $X_T=(X \times U)/T$  via action on the first
 factor and the fixed point set of $\gamma \in \Gamma$ is
 $X^{\gamma}_T$. 
Hence $\Ell^T_{orb}(X,\Gamma)=\Ell_{orb}(X_T,\Gamma)$. 
Now the theorem follows from the theorem 5.3 in \cite{BLannals} 
applied to the action of $\Gamma$ on $X_T$.

\end{proof}

\subsection{Push forward of contributions of components of fixed point
  set.}

The localization map (\ref{localizationformula}) allows to associate with 
a fixed component $F$ of an action of a torus an invariant 
constructed using contribution of $F$ into 
equivariant  elliptic class of $X$. In the case when $X$ is a smooth
projective variety the sum over all fixed components 
of these contributions evaluated on corresponding fundamental classes of the
components 
coincides with the equivariant elliptic genus of $X$ (cf. \cite{atiyahbott}).
In the case when $X$ is only {\it quasi}-projective but  a component $F$ 
is compact, the corresponding contribution is well defined and though
 by itself it does not have a geometric interpretation,
this contribution  does play the key
role in definitions of next section. Here we shall describe the push
forward property  of  contributions of compact components 
and its generalization to 
the orbifold case.

\begin{dfn} ({\it Local contribution of a component of fixed
    point set: smooth case}.) Let $X$ be a smooth quasi-projective variety, $T$ as
  above and let $E$ denotes a normal crossing divisor with $T$-invariant 
irreducible components.
Let $F$ be a component 
of the fixed point set. Assume that $F$ is compact 
and let $i_F: F \rightarrow X$ denotes its embedding. 
Let $c^T_{{\rm codim} F}(N_{F})$ be the equivariant Chern class of
the normal bundle of $F$ in $X$. 
Then the local contribution of $F$ into equivariant elliptic genus of
the pair $(X,E)$ is the class 
\footnote{the ring in this formula can be taken to be $A_*(F,QJac) \otimes \Q$}:
\begin{equation} \Ell^T_F(X,E)={i_F^*{\Ell(X,E)}\over {c^T_{{\rm codim} F}(N_F)}}\in
  A_*(F)\{\{q,y\}\}\otimes \Q
\end{equation}
\end{dfn}

\begin{theorem}\label{pushforwardlocas}
({\rm Push forward for local contribution of equivariant elliptic
  genus}) 
Let $X$ be a smooth quasi-projective
variety with action of a torus $T$ and let 
$F \subset X$ be a component of the fixed
point set which is compact. Denote by  $\phi: X'\rightarrow X$  
$T$-equivariant blow up with $T$-invariant center $Z \subset F$  
and let  $\F'_F=\bigcup_{F' \in Irr\F'}F'$  be the union of submanifolds $F'$ from the set $Irr(\F')$ of 
irreducible components of the fixed point set $\F'$ of the action
of $T$ on $X'$ mapped by $\phi$ onto $F$.
Let $E$ be $T$-invariant normal crossing divisor 
all component of which are 
transversal to $Z$ and $E'$ be the divisor on $X'$ 
 such that $\phi^*(K_X+E)=K_{X'}+E'$.
Then 
\begin{equation}\label{pushforwardlocal}
\phi_*\sum_{F'\in Irr (\F')}{i^*_{F'}{\Ell^T(X',E')}\over {c^T_{Codim
      F' \subset X'}(N_{F/X'})}}={{i^*_{F}{\Ell^T(X,E)}}\over {c^T_{Codim
      F\subset X}(N_{F/X})}}
\end{equation}

\end{theorem}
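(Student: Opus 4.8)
The plan is to derive this local identity from the global push forward formula of Theorem~\ref{pushforward} by means of the localization isomorphism of Theorem~\ref{localizationtheorem}. The starting observation is that, via the explicit inverse localization map (\ref{localizationformula2}), the local contribution of a compact fixed component is exactly the image of the equivariant elliptic class under restriction to that component, divided by the equivariant Chern class of its normal bundle. Thus in $A^T_*(X)\otimes\Q_T$ one has the localization decomposition $\Ell^T(X,E)=\sum_{G}(i_{G})_*\bigl(i_{G}^*\Ell^T(X,E)/c^T(N_{G/X})\bigr)$, the sum being over all components $G$ of the fixed locus of $T$ on $X$, and likewise for $\Ell^T(X',E')$ on $X'$, summed over $G'\in\Char$-free fixed components of $X'$.

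First I would apply $\phi_*$ to the localization decomposition of $\Ell^T(X',E')$. By Theorem~\ref{pushforward} the left-hand side equals $\Ell^T(X,E)$; here one must first upgrade that theorem from the projective to the present quasi-projective setting, which is legitimate because the underlying blow-up identity for elliptic classes (Theorem~\ref{blowup}, i.e. Theorem~3.5 of \cite{BLannals}) is local over the center $Z$, and $Z\subset F$ is compact. Applied fiberwise to $\phi_T\colon X'_T\to X_T$, exactly as in the proof of Theorem~\ref{pushforward}, the computation of \cite{Bduke} still gives $\phi_T^*(K_{X_T}+E_T)=K_{X'_T}+E'_T$ and hence $\phi_*\Ell^T(X',E')=\Ell^T(X,E)$.

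Next I would isolate the component $F$. Since distinct fixed components are disjoint and $Z\subset F$, the map $\phi$ is an isomorphism in a neighborhood of every fixed component $G\ne F$, and $E'=\phi^*E$ there because $E'$ differs from $\phi^*E$ only along the exceptional divisor over $Z$. Consequently the fixed components of $X'$ lying over such $G$ map isomorphically to $G$, and their local contributions push forward term-by-term to the local contribution of $G$ on $X$. Subtracting these matching terms from both sides of $\phi_*\Ell^T(X',E')=\Ell^T(X,E)$ leaves, on the right, $(i_F)_*\bigl(i_F^*\Ell^T(X,E)/c^T(N_{F/X})\bigr)$ and, on the left, $\phi_*$ of the sum of local contributions over exactly the components $F'\in Irr(\F')$ lying over $F$.

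Finally, using $\phi\circ i_{F'}=i_F\circ(\phi|_{F'})$ and the projection formula, I would rewrite the left-hand side as $(i_F)_*$ applied to $\sum_{F'\in Irr(\F')}(\phi|_{F'})_*\bigl(i_{F'}^*\Ell^T(X',E')/c^T(N_{F'/X'})\bigr)$, an expression living in $A_*(F)\otimes\Q_T$. Both sides are now $(i_F)_*$ of classes on $F$, and $(i_F)_*$ is injective after inverting $R_T^+$: by the self-intersection formula $i_F^*(i_F)_*=c^T(N_{F/X})\cap(-)$, whose top equivariant Chern class is invertible in $\Q_T$ since $N_{F/X}$ carries no trivial $T$-summand. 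Cancelling $(i_F)_*$ yields exactly (\ref{pushforwardlocal}). The one genuinely delicate point is the quasi-projective extension of the push forward formula in the second step; the remaining steps are bookkeeping with the localization decomposition and the projection formula.
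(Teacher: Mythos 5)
Your proof is correct and follows essentially the same route as the paper's: deduce the identity from the global push forward formula of Theorem \ref{pushforward} via the localization isomorphism (\ref{localizationformula2}), and then split according to the connected components of the fixed locus (your ``subtract the matching contributions of the components $G\ne F$'' is the paper's direct sum decomposition of $A^*(\F')$). The only real divergence is the treatment of quasi-projectivity: the paper passes to a compactification $\bar X$ and applies Theorem \ref{pushforward} there, whereas you invoke locality of the blow-up identity over the center $Z$ --- a technical variation, not a different argument.
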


\begin{proof} Let $\bar X$ be a compactification of $X$ 
and $\bar X'$ be the blow up of $\bar X$ at $Z \subset X \subset \bar
X$. Let $\F'=\bigcup_{F' \in Irr(\F')} F'$ (resp. $\F=\bigcup_{F \in
  Irr \F} F$) be the submanifold of $\bar X'$ 
of fixed points of action of $T$ on $\bar X'$ (resp. $\bar X$) and
$i_{\F'}: {\F'}\rightarrow \bar X'$ 
(resp. $i_{{\F}}:{\F} \rightarrow \bar X$)
be their embeddings. 
The push forward formula of theorem \ref{pushforward} can be rewritten as:
\begin{equation}
 {i_{\F}}_*({i_{\F}}_*^{-1}\phi_*
 {i_{\F'}}_*){{i^{-1}_{\F'}}_*{\Ell^T(\bar X',E')}}=
{{{\Ell^T(X,E)}}}
\end{equation}
Now using description of the inverse of $i_*$ given in (\ref{localizationformula2})
and $(\phi\vert_{\F'})_*={i_{\F}}_*^{-1}\phi_* {i_{\F'}}_*$ we obtain
\begin{equation}
 \phi_*{i^*_{\F'}{\Ell^T(\bar X',E')}\over {c^T_{top}(N_{\F'/\bar
       X'})}}={{i^*_{\F}{\Ell^T(X,E)}}\over 
{c^T_{top}(N_{\F/\bar X})}}
\end{equation}

Fixed point set $\F'$ (resp. $\F$)  is a disjoint union of smooth irreducible components
and hence $A^*(\F')=\oplus_{F'\in Irr(\F')} A^*(F')$ (similar direct
sum decomposition for
$\F$) where summation is over  
the set $Irr(\F')$ of irreducible components of $\F'$ (resp. $\F$).
The split is given by projections $i_{F'}^*: A^*(\F') \rightarrow
A^*(F')$ 
(resp. $i_{F}^*: A^*(\F) \rightarrow
A^*(F)$) where $i_{F'}: F'\rightarrow \F'$ is embedding of an
irreducible component into the disjoint union  (and the same for $F$).
The map ${\phi\vert_{\F'}}_*$ respects the above direct sum
decomposition 
with ${\phi\vert_{\F'}}_*^{-1}(A^*(F))=\oplus_{F' \in Irr(\F')}
A^*(F')$.
This implies (\ref{pushforwardlocal}).
\end{proof}

\subsection{Contributions of components of fixed point set into 
orbifold elliptic genus} 
 Let $X$ be a smooth quasi-projective variety, let $T$ be  a torus
  acting on $X$ effectively and let $\Gamma$ is a finite group acting upon $X$, 
(all actions are via biholomorphic automorphisms).
We shall assume that 
the action of $\Gamma$ commutes with the action of $T$ 
i.e. for all $t \in T, \gamma \in \Gamma$ and any $x \in X$ 
one has $\gamma t \cdot x=t \cdot \gamma x, \gamma, t \in Aut(X)$.
This implies that $\Gamma$ leaves invariant the fixed point set $X^T$ of 
the torus $T$, each fixed point set $X^g, g \in \Gamma$ is $T$-invariant
 and that $T$ acts on the quotient $X/\Gamma$. We denote 
by $T^{eff}$ the quotient of $T$ which acts {\it effectively} on $X/\Gamma$.

If $F$ is a connected component of $X^T$ and 
$F^{\gamma}$ is a component of the fixed point set of an element 
 $\gamma \in \Gamma$ acting upon $F$ 
then restriction of cotangent (or tangent) bundle $\Omega^1_X$ of $X$ 
on $F^{\gamma}$ has a canonical structure of an equivariant
$T$-bundle. 
If $V$ is an eigenbundle of this $T$-action on
$\Omega^1_X\vert_{F^{\gamma}}$ then, since we assume that actions 
of $\Gamma$ and $T$ commute,  
$V$ is invariant under the action of $\gamma$ as well.

If $rk V=1$ then, as in section \ref{orbellgen}, for
$\gamma \in \Gamma$ we let $\lambda(\gamma)$ 
denote the logarithm ${1 \over {2 \pi \sqrt{-1}}}log \in [0,1)$ of the
value on $\gamma$ of the character of action on $V$ of the 
subgroup $<\gamma>$ of $\Gamma$ generated by
$\gamma$.  We assign the subscript $\lambda$ to such a line
bundle $V$, put $x_{\lambda}=c^T_1(V_{\lambda}) \in A^2_T(F^{\gamma})$
and count the class $x_{\lambda}$ with multiplicity equal to the multiplicity of the
character $\gamma \rightarrow exp(2 \pi \sqrt{-1}\lambda(\gamma))$
in the bundle $\Omega^1_X\vert_{F^{\gamma}}$. Similar collection 
of equivariant Chern classes arises from the normal 
bundles to the fixed point sets $F^g\cap F^h$ of pairs $g,h$
commuting elements in $\Gamma$.
\begin{dfn}\label{defPhi}
Let $F \subset X$ be a connected {\it compact} component of the fixed
point set of an action of 
$T$ and let for a commuting pair $g,h \in \Gamma$,  $F^{g,h}$ denotes submanifold of $F$
consisting of the points fixed by both $g$ and $h$. 
We associate with a connected component of $F^{g,h}$ and a rank one 
$T$-eigenbundle $V$ of $\Omega^1_X\vert_{F^{g,h}}$, 
 the characteristic class in the ring $A^*_T(X,\CC)[[q,y]]$ given by: 
\begin{equation}\label{equivariantorbifoldsummand}
     \Phi^T_{F^{g,h}}(x^T,g,h,z,\tau,\Gamma)={{\theta({x^T\over {2 \pi
             i}}+\lambda(g)-\tau \lambda(h)-z)} \over 
  {\theta({x^T\over {2 \pi
             i}}+\lambda(g)-\tau \lambda(h))}} e^{2 \pi i z
       \lambda(h)} 
\end{equation}
where $x^T$ is equivariant Chern class of $V$. 
\end{dfn} 
Below we also denote 
by $Cong(\Gamma)$ the set of conjugacy classes of $\Gamma$, 
  $C(g), g \in \Gamma$ will denote the centralizer of $g$, 
$\Lambda$ be the set of $(g,h)$-eigenbundles of tangent bundle to $X$ 
restricted to $F^{g,h}$ 
and $\Lambda_{F^{g,h}}$ will be the  
collection of  $(g,h)$-eigenbundles of $N_{F^{g,h}\subset F}$ 
such that $\lambda(g)=\lambda(h)=0$.

\begin{dfn}\label{equivariantorbifoldcontribution}
The contribution of $F \in X^T$ into $T$-equivariant orbifold elliptic genus
of $(X,\Gamma)$ is the sum:
\begin{equation}\label{equationequivariantorbifoldcontribution}
\Ell^{T^{eff}}_F(X,\Gamma,u,z,\tau)=
\end{equation}
$$\sum_{\{g\} \in Cong(\Gamma)} {1 \over {\vert C(g) \vert}} \sum_{h
    \in C(g)}  
(\prod_{\lambda \in \Lambda_{F^{g,h}}}
x_{\lambda})
\prod_{\lambda \in \Lambda} 
\Phi^{T^{eff}}_{F^{g,h}}(x_{\lambda},g,h,z,\tau,\Gamma)[F^{g,h}]
$$
where $N_{F^{g,h} \subset F}$ is the normal bundle to $F^{g,h}$ in
$F$ and all equivariant
Chern classes expressed in terms of the characters of $T^{eff}$.
\end{dfn}

The motivation of this definition is the following. 
Orbifold elliptic genus (\ref{commutingpairselliptic})
is a sum over pairs of commuting elements in 
$\Gamma$
of  classes in the Chow ring (which are combinations of Chern
classes $x_{\lambda}$ of bundles $V_{\lambda}$ in (\ref{orbgenusdef})) evaluated on the
fundamental class of $X^{g,h}$ (cf. proof of theorem 4.3 in
\cite{Bduke}). In the case when $X$ 
is projective, the localization formula (cf.(\ref{localizationformula2}))
applied to the equivariant version of the orbifold elliptic genus replaces
each summand in (\ref{commutingpairselliptic})
by the sum over components $F$ of $X^T$
of pullbacks to $F^{g,h}=F \cap X^{g,h}$ classes  (\ref{commutingpairselliptic}) 
 divided by the equivariant top Chern class 
of the normal bundle to $F^{g,h}$ in $X^{g,h}$.
Definition \ref{equivariantorbifoldcontribution} is the sum over
$(g,h)$ of contribution from one individual component $F$.

\begin{example}{\it Trivial sector of contribution described in
    Definition \ref{equivariantorbifoldcontribution} for $\chi_y$-genus.} 
 Specialization to the case $q=0$ of the term corresponding to pair $g=h=1$
(i.e. the trivial,sector) gives the following local contribution of component $F$ of fixed point set of
action of $T$ on $X$ into $\Gamma$-orbifold
$\chi_y$-genus:
\begin{equation}\label{chiycontribution}
              {\chi_y(X,\Gamma,g=h=1)}^{T^{eff}}_F=
\Prod_{\lambda} (y^{-{1 \over 2}}x_{\lambda(T)}{{1-ye^{-x_{\lambda(T)}}}\over
{1-e^{-x_{\lambda(T)}}}}) \cdot  
\Prod_{\lambda} (y^{-{1 \over 2}}{{1-ye^{-x^n_{\lambda(N)}}}\over
{1-e^{-x^n_{\lambda(N)}}}})
\end{equation}
where $x_{\lambda(T)}$ are the Chern roots of the tangent bundle to $F$ 
(appearing in the first product) 
and $x^n_{\lambda(N)}$ are the equivariant Chern roots of the normal
bundle to $F$ (contributing to the second factor in
(\ref{chiycontribution})).
Indeed, in the sector $g=h=1$ in
(\ref{equationequivariantorbifoldcontribution}), 
 we have only one term which is
specialzation of class $\Phi$ given in Definition \ref{defPhi}.
\end{example} 

The contributions into orbifold elliptic genus corresponding to
compact components of fixed point set described in 
Def. \ref{equivariantorbifoldcontribution}
satisfy the following, localized at $F$,  McKay correspondence proof of which can 
be obtained in the same way as the proof of theorem \ref{pushforwardlocas}.
More general case, providing local equivariant version for pairs 
as in \cite{BLannals} can be obtained similarly.

\begin{theorem}\label{localequivarmckay} Let $X$ be a smooth quasi-projective variety, $T$ 
a torus and $\Gamma$ a finite group both acting on $X$ via biholomorphic 
automorphisms so that their actions commute i.e. 
$\gamma \cdot t v=t 
\cdot \gamma v, \gamma \in \Gamma, t \in T, v \in X$. 
Let $\phi: \tilde X \rightarrow X/\Gamma$ be a crepant
resolution of singularities of the quotient $X/\Gamma$ (if it exist.) As above,
denote by  $T^{eff}$ the quotient of $T$ by the finite group which acts {\it effectively}
on $X/\Gamma$. 
Let $F \subset X/\Gamma$ be a component of the fixed point set of
$T^{eff}$ and $\F'$ be the collection of components of the fixed point 
set of $T$ such that $\phi(F')\subset  F, F' \in \F'$. Then
\begin{equation}
     \sum_{F'\in \F'} \Ell^{T^{eff}}_{F'}(\tilde X) =\Ell^{T^{eff}}_F(X,\Gamma)
\end{equation}
\end{theorem}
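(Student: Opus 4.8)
The plan is to reduce Theorem \ref{localequivarmckay} to the already-established global equivariant McKay correspondence (Theorem \ref{mckay}) by a localization-and-compactification argument that exactly parallels the proof of Theorem \ref{pushforwardlocas}. The key observation is that the local contribution $\Ell^{T^{eff}}_F$ of a compact fixed component is, by Definition \ref{equivariantorbifoldcontribution}, nothing but the localized form of the global equivariant orbifold elliptic class: it is obtained by pulling back $\Ell^T_{orb}$ to the component $F^{g,h}$ and dividing by the equivariant top Chern class of the appropriate normal bundle, summed over commuting pairs. So the strategy is to recover the global push-forward identity of Theorem \ref{mckay}, namely $\psi_*(\Ell^T_{orb}(X,\Gamma))=\Ell^T_{sing}(X/\Gamma)$, and then apply the localization isomorphism (\ref{localizationformula2}) to both sides.

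First I would pass to a $T$-equivariant compactification $\bar X$ of $X$, chosen so that the $\Gamma$-action and the commuting $T$-action both extend; since $F$ is compact it is unaffected by the compactification, and the collection $\F'$ of components of $\tilde X^T$ lying over $F$ is likewise contained in the compact locus. On the compactified model, Theorem \ref{mckay} applies and yields the global identity relating $\Ell^T_{orb}(\bar X,\Gamma)$ to $\Ell^T_{sing}(\bar X/\Gamma)$, which by the crepancy hypothesis on $\phi:\tilde X\to X/\Gamma$ coincides with $\phi_*\Ell^T(\widetilde{\bar X})$ (here the divisor correction vanishes because the resolution is crepant, i.e. all $\alpha_k=0$). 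Next I would rewrite this global equality using the explicit localization formula (\ref{localizationformula2}): decompose the fixed locus into its irreducible components as in the proof of Theorem \ref{pushforwardlocas}, observe that $A^*(\widetilde{\bar X}^T)=\oplus_{F'}A^*(F')$ splits, and that $\psi$ (resp. $\phi$) restricted to the fixed loci respects this decomposition with $(\phi|_{\F'})_*^{-1}(A^*(F))=\oplus_{F'\in\F'}A^*(F')$. Matching the summand indexed by the single component $F$ on the quotient side with the sum of summands indexed by $\{F'\in\F'\}$ on the resolution side extracts precisely the localized identity $\sum_{F'\in\F'}\Ell^{T^{eff}}_{F'}(\tilde X)=\Ell^{T^{eff}}_F(X,\Gamma)$.

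The step I expect to be the main obstacle is the bookkeeping that identifies the localized global class with the \emph{local} contribution of Definition \ref{equivariantorbifoldcontribution}, specifically the compatibility of the eigenbundle data across commuting pairs $(g,h)$ with the passage to $T^{eff}$. One must check that the quotient torus $T^{eff}$ (dividing out the finite subgroup of $T$ acting trivially on $X/\Gamma$) interacts correctly with the $\Gamma$-eigenbundle decomposition of $\Omega^1_X|_{F^{g,h}}$, so that the equivariant Chern roots $x_\lambda=c_1^{T^{eff}}(V_\lambda)$ and the fermionic-shift characters $\lambda(g),\lambda(h)$ appearing in $\Phi^{T^{eff}}_{F^{g,h}}$ really are the localized avatars of the global orbifold summands in (\ref{commutingpairselliptic}). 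The remaining subtlety is confirming that the normal-bundle denominators organize correctly: on the quotient side one divides by $c^T_{\mathrm{codim}\,F\subset X/\Gamma}(N_{F/(X/\Gamma)})$, while on the resolution side each $F'$ contributes $c^T_{\mathrm{codim}\,F'\subset\tilde X}(N_{F'/\tilde X})$, and the push-forward $\phi_*$ must reconcile these via the self-intersection formula. Once this identification is in place, the conclusion follows formally as indicated, and indeed the general pair case alluded to in the remark preceding the theorem follows by substituting Theorem 5.3 of \cite{BLannals} for Theorem \ref{mckay} throughout.
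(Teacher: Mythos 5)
Your proposal matches the paper's intended argument: the paper gives no detailed proof, stating only that the result "can be obtained in the same way as the proof of theorem \ref{pushforwardlocas}", i.e.\ by combining the global equivariant McKay correspondence with the localization isomorphism (\ref{localizationformula2}) and the component-wise splitting of the Chow groups of the fixed locus --- which is precisely what you do. Your fleshed-out version, including the compactification step and the bookkeeping of eigenbundle data over $T^{eff}$, is a faithful expansion of that sketch.
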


In the next section, we consider explicite examples of  calculations
of contributions of fixed components 
of $\CC^*$-actions on the GIT quotients by the actions of tori on
 bundles over quasi-projective varieties. They will provide 
ample illustration to the theorem \ref{localequivarmckay}.

\section{Elliptic genus of phases.}

This section discusses applications of the local contributions of compact 
components of the fixed point sets introduced in previous section
in the special case when action of $T=\CC^*$ takes place 
on a GIT quotient of the total 
space of a vector bundle by an action of reductive group. 
This action of $T$ is canonical in the sense that it is induced from the action 
of $T$ on the total space of vector bundle
by dilations $v \rightarrow t \cdot v,
t \in \CC^*$. This is an extension of the framework of examples considered
by Witten in \cite{n2}. Following this work,  in \cite{phasesme} we
called our GIT
quotient {\it phases} we well. We also attached to such framework an 
elliptic genus 
and describe its orbifoldization when additional symmetries are present. 
We show that this extends well known elliptic genera of Landau Ginzburg
and $\sigma$-models.

\subsection{Phases.}
  
We will start with a very special
example of a phase considered by Witten (\cite{n2}) 
in which we calculate contribution of  component of
the fixed point set not into elliptic genus
but rather into $\chi_y$-genus (which is 
the limit $q \rightarrow 0$ of the  elliptic genus). ``Advantage'' of $\chi_y$-genus
of course is that this is a Laurent polynomial, rather than a more
general holomorphic
function. In this example we work with $\chi_y$-genus directly,
i.e. perform localization of $\chi_y$-genus rather than elliptic
genus. Already this calculation in the case of Landau-Ginzburg phase 
results in Arnold-Steenbrink spectrum of weighted homogeneous singularity, 
providing interpretation of the latter using equivariant cohomology.

\begin{example}\label{lgexample} Let $w_1,...,w_n,D$ be collection of positive integers. 
Consider $G=\CC^*$-action  on $\CC \times \CC^n$ given by 
\begin{equation}\label{weightedaction}
      \lambda(s,z_1,....,z_n)=(\lambda^{-D}s,\lambda^{w_1}z_1,....,\lambda^{w_n}z_n)
\end{equation}
The quotient of the subset in $\CC\times \CC^n$ given by $s\ne 0$ is 
the orbifold $W/\mu_D$ where $W$ is a vector space, ${\rm dim}_{\CC}W=n$
and $\mu_D$ is the group of roots of
unity of degree $D$ acting via multiplication on $W$.
The group $T^{eff}=\CC^*$ acts on $\CC \times \CC^n$ via $t(s,z_1,...,z_n) 
\rightarrow (ts,z_1,...,z_n)$ and this action induces effective acton 
of $T^{eff}$ on $W/\mu_D$. The effective $\CC^*$-action on $W$, which induces this
action of $T^{eff}$ on $W/\mu_D$, is multiplication by $r \in \CC^*$ where $r^{-D}=t$ 
($r$ and $rexp({{2 \pi \sqrt{-1} l} \over D}), l \in \ZZ$ induce the same
automorphisms 
of $W/\mu_D$) i.e. $W$ is acted upon by $T=\CC^*$ which the
$D$-fold cover of the group $T^{eff}$ acting on $W/\mu_D$. In particular 
infinitesimal characters of the normal bundle at the fixed point $\O$  
of the action of  $T$ on $W$ (i.e. the origin) in terms of the characters
of $T^{eff}$ are $x_i=-{w_iu \over D}$, where $u$ is the infinitesimal character
of $T$.  

It follows from (\ref{chiycontribution}) that the trivial sector of the 
local contribution of $\O$ into orbifold $\chi_y$-genus is 
given by:
\begin{equation}
\Prod_i y^{-{1 \over 2}}({{w_iu} \over D } {{1-ye^{-{{w_iu} \over D}}} \over
{1-e^{{-w_iu}\over D}}}) \cdot {1 \over {{{w_iu} \over D }}}
\end{equation}
For special value of $u$ given by $u=2 \pi \sqrt{-1} z$ one obtains:
\begin{equation} 
\Prod_iy^{-1 \over 2}{{1-y^{(1-{{w_i} \over D})}} \over
{1-y^{{-w_i}\over D}}}=\Prod_iy^{-1 \over 2} {{y^{w_i \over D}-y} \over
 {{y^{w_i \over D}-1}}}
\end{equation} 
\end{example}
\noindent which coincides with  generating function of spectrum 
as calculated in \cite{st} (its definition reminded in Prop. \ref{reductiontoy}).
\smallskip

Now we consider general case for which Example \ref{lgexample} is an illustration.

\begin{dfn}\label{defphases}(cf. \cite{phasesme})  Let $\E$ be the
  total space of a vector bundle $E$ on a
  smooth quasi-projective manifold $X$. Let $G$  be a reductive 
algebraic group acting by biholomorphic authomorphism on $\E$. Let $\kappa$ be a
linearization 
of this $G$-action 
satisfying the conditions of Prop.3.1 in \cite{phasesme} 
\footnote{which implies that the   
$\CC^*$-action  by dilations is well defined on the GIT quotient}.
Phase of $G$-action on $\E$ corresponding to linearization
$\kappa$ 
is the GIT quotient $\E//_{\kappa}G=\E^{ss}/G$ endowed with the
$\CC^*$-action induced by 
$\CC^*$-action given by dilations $t(v)=t \cdot v,  t \in
\CC^*, v \in \E$.

A phase is called Landau-Ginzburg if this GIT quotient is
an orbifold biholomorphic to a quotient of $\CC^n$ by a finite
subgroup of $GL_n(\CC)$.

A phase is called $\sigma$-model (resp. Calabi Yau) if this GIT quotient is biholomorphic to
the total space of a vector bundle (resp. the canonical bundle) over a compact orbifold.
\end{dfn}

Change of linearization $\kappa$ of $G$-action on $\E$ may result 
in a birationally equivalent GIT-quotient. More specifically, 
if $NS^G(\E)$ denotes the equivariant Neron-Severi group 
(in the case when $\E$ is an affine space  this is just the group 
${\rm Char} G$ of characters of $G$), then 
there is a
partition of $NS^G(\E) \otimes \QQ$ into a union of cones such that GIT-quotients are
biregular for linearizations within a cone and $\E//_{\kappa}G$ acquires change when 
$\kappa$ belongs to the boundary of a cone or is moving into 
adjacent one. For general discussion of changes of GIT-quotients
we refer to \cite{thaddeus} or \cite{dolgachev} and to \cite{phasesme}
for particular 
case of the total spaces of bundles  as in Definition \ref{defphases}.

\smallskip

GIT-quotients are often singular but we will be interested in the cases when they are
biholomorphic to global quotients of a smooth manifold which 
we call {\it uniformization} of a global quotient. 

\begin{dfn}\label{uniformization} A smooth quasi-projective variety $\bar X$ together 
with an action of a finite group $\Gamma$ is called an 
uniformization of a phase 
  $\E//_{\kappa}G$ if 

1. there exist 
a biholomorphic isomorphism $\E//_{\kappa}G \rightarrow \bar X/\Gamma$

2. there is an action of 1-dimensional complex torus $T$ 
on $X$, a finite degree covering map $\pi: T  \rightarrow T^{eff}$ of
1-dimensional torus $T^{eff}$ acting on $\E//_{\kappa}G$ via dilations
(cf. Def. \ref{defphases}) 
such that the quotient map $\phi: \bar X \rightarrow
\bar  X/\Gamma=\E//_{\kappa}G$ is
equivaraint i.e. $\phi(t \cdot x)= \pi(t) \cdot \phi(x), t \in T$.
\end{dfn}

The following is an illustration to Definitions \ref{defphases} and
\ref{uniformization} 
with example borrowed from \cite{n2}.

\begin{example}\label{lgexample2}
 Quotient in Example \ref{lgexample} is a special case of
  the quotients considered in Definition 
\ref{defphases} with 
$X=\CC^n$, $\E=\CC^{n+1}$ being the total space of
  the trivial line bundle and  
$G=\CC^*$ acting on $\E$ via (\ref{weightedaction}).
In this case $Char(\CC^*) \otimes \QQ=\QQ$, there are 
two cones and for a pair of linearizations $\kappa_1,\kappa_2$ 
from distinct cones, the corresponding semi-stable loci are:
\begin{equation}
      {(\CC^{n+1})}^{ss}_{\kappa_1}=\CC \times (\CC^n-0)  \subset \CC^{n+1}\ \ \
      {(\CC^{n+1})}^{ss}_{\kappa_2}=(\CC-0) \times \CC^n \subset \CC^{n+1}
\end{equation}
In the simplest case, when $w_i=1,D=n$,
the corresponding GIT quotients are respectively the total space $[\O_{\PP^{n-1}}(-n)]$ of
the canonical bundle over $\PP^{n-1}$ and the quotient $W/\mu_n, W=\CC^n$ 
by the group of roots of unity of degree $n$ acting diagonally.  
As was mentioned in discussion of Example \ref{lgexample}, 
the dilations $t\cdot (s,z_1,...,z_n)=(ts,z_1,...,z_n)$ 
 induce on $\CC^n/\mu_n$ the action 
$t \cdot [(z_1,...,z_n) \ {\rm mod } \mu_n] =t(1,z_1,..,z_n) \ {\rm
  mod} \ \CC^*=
(t,z_1,...,z_n) \ {\rm mod} \ \CC^* =(t^{-{1 \over n}}z_1,...,t^{-{1 \over n}}z_n) {\rm
  mod} \mu_n$. 
This action is effective on the quotient $W/\mu_n$.
Denote by $\pi: \lambda \rightarrow t=\lambda^{-n}$ the (cyclic) covering map 
of one-dimensional tori $\CC_{\lambda}^*\rightarrow \CC_t^*$ and 
let $\phi: \CC^n \rightarrow \CC^n/\mu_n$ be
the quotient map.    Assume that  $\CC^*_{\lambda}$ is acting on $\CC^n$ via
multiplication of coordinates by $\lambda$ and $\CC^*_t$ acts on
$\CC^n/\mu_n$ as above.
Then $\phi(\lambda v)= t \cdot \phi(v)$ and therefore we have an 
uniformization 
 in the sense of Definition \ref{uniformization}. Hene we have a LG
 phase.
The quotient which is the total space $\O_{\PP^{n-1}}(-n)$ is the
$\sigma$-model (in fact CY) 
phase. Here GIT quotient is smooth, dilations on $\CC \times \CC^n$
induce on $\O_{\PP^{n-1}}(-n)$ the multiplication 
by elements of $\CC^*$ which is an effective
action and does not require 
uniformization. 
\end{example}


\subsection{Elliptic genus of a phase}

Next we shall define elliptic genus of a phase for which 
the fixed point set of  $\CC^*$-action induced by dilations
has a compact component.

\begin{dfn}\label{defellgenphase} ({\it Elliptic genus of a phase}) Let 
$X,G,\E,\kappa$ 
be as in Def. \ref{defphases}. 
Assume that $\E//_{\kappa}G$ admits uniformization
$\widetilde{\E//_{\kappa}G}$ i.e. $\widetilde{\E//_{\kappa}G}/\Gamma=
{\E//_{\kappa}G}$ for an action of a finite group $\Gamma$ and that 
one has the action of $T=\CC^*$ on $\widetilde{\E//_{\kappa}G}$ 
such that the quotient map $\widetilde{\E//_{\kappa}G} \rightarrow {\E//_{\kappa}G}$ 
 is equivariant for
the $\CC^*$-action on $\E//_{\kappa}G$
induced by dilations on $\E$.
Let $F \subset \widetilde{\E//_{\kappa}G}$ be a 
compact component of  fixed point set of $T$-action   
on $\widetilde {\E//_{\kappa}G}$. 
Consider the local contribution of the component $F$ 
into $T$-equivariant orbifold elliptic genus  
\begin{equation}\label{lasteq}
Ell^{T^{eff}}_{orb}(\widetilde {\E//_{\kappa}G},\Gamma,u,z,\tau)
\end{equation} 
given by (\ref{equationequivariantorbifoldcontribution}) in Definition
\ref{equivariantorbifoldcontribution}
where $u$ is an infinitesimal character of the action of  maximal,
effectively acting quotient $T^{eff}$.
Then the elliptic genus of the phase $(X,G,\E,\kappa)$ 
relative to the component $F$, denoted as
$Ell(\E//_{\kappa}G,F,z,\tau)$, is defined {\it as the 
restriction of the local contribution (\ref{lasteq})
on the diagonal $u=z$} of $\CC\times \CC\times \H$:
\begin{equation}Ell(\E_{\kappa}//G,F,z,\tau)=
Ell^{T^{eff}}_{orb}(\widetilde {\E//_{\kappa}G},\Gamma,F,z,z,\tau)
\end{equation}

More generally, the same definition can be used in the 
cases when $\widetilde {\E//_{\kappa}G}$ has Kawamata 
log-terminal singularities and when $Ell(\widetilde {\E//_{\kappa}G},\Gamma)$
is well defined as the orbifold elliptic 
genus of a pair obtained via a resolution of singularities and taking 
into account the divisor determined by the discrepancies of the resolution
(cf. \cite{BLannals}).
\end{dfn}

In the next theorem we shall describe a class of phase transitions 
in which one can apply equivariant McKay correspondence to obtain 
invariance of elliptic genus in such transitions.  

\begin{theorem}\label{invariancephasetransition}
 {(\rm Invariance of elliptic genus in phase transitions}.)
 Let $\E//_{\kappa_1}G=X_1$ $=\bar X_1/\Gamma$, $\E//_{\kappa_2}G=                    
X_2$ $=\bar X_2/\Gamma,                                                            
\tilde X_1,\tilde X_2,\Gamma$ are as
in \ref{uniformization}. Assume that
$\psi: X_1 \rightarrow X_2$ is
a K-equivalence i.e. $\psi^*(K_{X_2})=K_{X_1}$.
Then
\begin{equation}\label{maintheoremformula}
  \sum_{P_i}Ell(\L//_{\kappa_1},F_i)=
 Ell(\L//_{\kappa_2},F)
\end{equation}
where $F_i$ is collection of fixed point sets which $\psi$ takes
into $F$.
\end{theorem}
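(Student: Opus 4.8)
\emph{Sketch.} The plan is to prove the equality of the full $T^{eff}$-equivariant local contributions as functions of $(u,z,\tau)$, and only at the very end specialize to the diagonal $u=z$ to recover the elliptic genera of the phases. First I would lift the $K$-equivalence $\psi\colon X_1\to X_2$ to the uniformizations. Since $\psi$ is a birational identification of the two GIT quotients away from the loci that change under the wall-crossing, and since it is equivariant for the dilation action of $T^{eff}$, it lifts to a $\Gamma$- and $T$-equivariant birational map $\tilde\psi\colon\tilde X_1\dashrightarrow\tilde X_2$ between the uniformizations (both are quotients by the \emph{same} finite group $\Gamma$, and the two $T$-actions cover the common $T^{eff}$-action by Definition~\ref{uniformization}). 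The goal then becomes the identity $\sum_i\Ell^{T^{eff}}_{F_i}(\tilde X_1,\Gamma)=\Ell^{T^{eff}}_{F}(\tilde X_2,\Gamma)$ of the equivariant orbifold contributions of Definition~\ref{equivariantorbifoldcontribution}, after which restriction to $u=z$ gives (\ref{maintheoremformula}).

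Next I would construct a common roof. Using equivariant resolution of singularities together with equivariant weak factorization, I would produce a smooth $\hat X$ carrying commuting $T$- and $\Gamma$-actions and proper birational $T\times\Gamma$-equivariant morphisms $p_1\colon\hat X\to\tilde X_1$, $p_2\colon\hat X\to\tilde X_2$ resolving $\tilde\psi$, each factoring as a sequence of blow-ups along smooth $T\times\Gamma$-invariant centres. Writing $p_i^*K_{\tilde X_i}=K_{\hat X}-\hat E^{(i)}$ for the discrepancy divisors, the decisive point is that the $K$-equivalence hypothesis $\psi^*K_{X_2}=K_{X_1}$, transported through the two quotient maps $\tilde X_i\to X_i$ (whose ramification data agree because both are quotients by the same $\Gamma$ acting compatibly with $T$), forces $p_1^*K_{\tilde X_1}=p_2^*K_{\tilde X_2}$ and hence $\hat E^{(1)}=\hat E^{(2)}=:\hat E$ as $\Gamma$-invariant normal crossing divisors on $\hat X$. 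Thus $(\hat X,\hat E,\Gamma)$ is a single orbifold pair that is crepant over both uniformizations.

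With the matched pair in hand I would invoke the push-forward for local contributions. For each blow-up in the factorization of $p_1$ (respectively $p_2$) the orbifold/pair generalization of Theorem~\ref{pushforwardlocas}, established by the same argument as Theorem~\ref{localequivarmckay}, shows that the sum of the local orbifold contributions of the fixed components of $\hat X$ lying over a given fixed component downstairs equals the contribution of that component. Composing along the chain gives
\[
\sum_{F'\subset\hat X^T,\ p_1(F')\subset\bigcup_i F_i}\Ell^{T^{eff}}_{F'}(\hat X,\hat E,\Gamma)=\sum_i\Ell^{T^{eff}}_{F_i}(\tilde X_1,\Gamma),
\]
together with the corresponding identity with $p_2$, $F$ in place of $p_1$, $\bigcup_i F_i$. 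Since $\tilde\psi$ carries $\bigcup_i F_i$ onto $F$, the two collections of fixed components of $\hat X$ appearing on the left coincide, so the two left-hand sides are literally the same sum; hence the two right-hand sides agree. Restricting this equality of functions of $(u,z,\tau)$ to the diagonal $u=z$ yields (\ref{maintheoremformula}).

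The hard part will be the discrepancy-matching step: one must verify that the $K$-equivalence of the possibly singular quotients $X_1,X_2$ transfers to $p_1^*K_{\tilde X_1}=p_2^*K_{\tilde X_2}$ on the common resolution of the uniformizations, keeping precise track of the $\Gamma$-ramification of $\tilde X_i\to X_i$ and of the compatibility of each blow-up centre with both the $T$- and the $\Gamma$-action. A secondary technical point is arranging the equivariant weak factorization so that it simultaneously respects the two commuting actions and preserves the normal-crossing and transversality hypotheses required by Theorem~\ref{pushforwardlocas} at every stage; granting these, the invariance of the summed local contributions, and therefore of the phase elliptic genus, follows formally from the push-forward formulas already in hand.
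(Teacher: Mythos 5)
The paper states Theorem \ref{invariancephasetransition} without giving a proof (it is imported from \cite{phasesme}); the route it signals, in the sentence immediately preceding the statement and in the proof of part 4 of Proposition \ref{lgexample3a}, is to descend to the singular GIT quotients and apply the local equivariant McKay correspondence (Theorem \ref{localequivarmckay}) together with the push-forward formula for local contributions (Theorem \ref{pushforwardlocas}): each side of (\ref{maintheoremformula}) is identified with the local contribution at $F$ of the equivariant \emph{singular} elliptic class of the klt quotient, which is insensitive to which phase one resolves because $\psi$ is crepant. Your proposal goes the other way, up to a common $T\times\Gamma$-equivariant resolution of the two uniformizations, and transports the orbifold local contributions through that roof. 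This uses exactly the ingredients the paper has already established and is workable in outline, but it is a genuinely different route, and it trades the McKay step for a lifting problem that the paper's route avoids.

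Two steps need repair. First, you posit a roof $\hat X$ on which \emph{both} projections $p_1,p_2$ factor as sequences of smooth equivariant blow-ups; that is strong factorization, which is not available. What exists is (equivariant) \emph{weak} factorization, a zigzag of blow-ups and blow-downs, and the argument must be run along the zigzag, using at each elementary step Theorem \ref{pushforwardlocas} (in its orbifold/pair form) and the resulting independence of the local contribution of a klt pair from the choice of log resolution --- this is precisely how Theorem \ref{blowup} is upgraded to resolution-independence in \cite{BLannals} and how the paper's proofs of Theorems \ref{pushforward} and \ref{localequivarmckay} reduce to single blow-ups. Second, the existence of a $\Gamma$- and $T$-equivariant lift $\tilde\psi$ of $\psi$ to the uniformizations, and the transfer of the discrepancy identity $\psi^*K_{X_2}=K_{X_1}$ through the two quotient maps $\bar X_i\rightarrow X_i$, is asserted rather than proved; since the uniformizations are given only abstractly by Definition \ref{uniformization}, this is where the real content of your discrepancy-matching step sits, and it is cleaner to sidestep it by comparing the two phases downstairs via their equivariant singular elliptic classes, as the McKay-based route does.
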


\subsection {Quotients of phases by the action of a finite group.}\label{quotientsphasessection}

 Constructions of mirror symmetry in toric or weighted homogeneous 
 case  (cf.  \cite{batyrev} and \cite{berglund})
suggest to consider orbifoldization of phases with respect to finite
groups. Even the very first construction of mirror symmetric of Calabi Yau
quintic in $\PP^4$ (cf. \cite{candelas}) was obtained via
orbifoldization. 
The orbifoldization of elliptic genus of Calabi Yau and Landau-Ginzburg models was
proposed in \cite{berglund}, \cite{kawaiyang}. Here we discuss
orbifoldization of arbitrary phases including hybrid ones.

Let $X$ be quasi-projective manifold with an action 
for a reductive group $G$ and let $\Gamma$ be a finite 
subgroup of the group of biregular automorphisms
of $X$ which normalizes $G$ i.e. for any $\gamma \in \Gamma$, 
$g\in G$ one has $\gamma g \gamma^{-1} \in G$. 
We say that $\Gamma$ {\it normalizes a linearization $\kappa$} of
$G$-action on $X$ 
if  action of $\Gamma$ of $X$ lifts to the action on the total space
of the ample line bundle $L_{\kappa}$ underlying $\kappa$ so that this lift
normalizes the action of $G$ on the total space of $L_{\kappa}$.
This assumption implies that $\Gamma$ acts on the semi-stable locus
\begin{equation}
     X^{ss}=\{ x\in X \vert \exists s \in \Gamma(X,L_{\kappa}^{\otimes m})^G,
     s(x) \ne 0\}
\end{equation}
Here the action of either $G$ or $\Gamma$ on $\Gamma(X,L_{\kappa}^{\otimes m})$ is given by 
$(gs)(x)=gs(g^{-1}x)$ ($g$ is an element of either $G$ or $\Gamma$). 
Indeed, $(\gamma \cdot s)(\gamma(x))=
\gamma s(x) \ne 0$ if $s(x)\ne 0$. The action of $\Gamma$ on 
$X^{ss}_{\kappa}$ in turn defines its action on $X//_{\kappa}G$.  

First we shall consider orbifoldization of elliptic genus
(i.e. defining the elliptic genus of the corresponding orbifold)
in the case when GIT quotient $\E//_{\kappa} G$ is smooth.

\begin{dfn}\label{phasequotients} ({\rm Orbifoldization of smooth phases}).
Let  $X,\E,G, \kappa$ be as in Definition \ref{defphases}, 
$\Gamma$ be a finite group of automorphisms of bundle $\E \rightarrow
X$ normalizing linearization $\kappa$
and $\E//_{\kappa}G$ be the phase corresponding to 
$X,\E,\kappa$ endowed with the action of $\Gamma$ 
induced from the action on $G$-semistable locus in $\E$ corresponding 
to $\kappa$. 
If $\E//_{\kappa}G$ is smooth and $F$ is 
a compact component the fixed point set of the $\CC^*$ action on $\E//_{\kappa}G$
induced by dilations $\lambda (v)=\lambda \cdot v,  v \in \E,
\lambda \in \CC^*$ 
then the $\Gamma$-orbifoldized elliptic genus of this phase 
corresponding to $F$ is the contribution (\ref{equivariantorbifoldcontribution})
of component $F$ into $\CC^*$-equivariant $\Gamma$-orbifold elliptic genus of 
$\E//_{\kappa}G$. 
\end{dfn} 

More generally, in the case when $(\E//_{\kappa}G)$
is an orbifold, assume further that it is a global quotient 
admitting as uniformization $(Y,\Gamma,T)$ in the sense of Definition \ref{uniformization}
and that there is a finite group $\Delta$ of automorphisms of  $Y$, containing  
$\Gamma$ as a normal subgroup \footnote{in particular $\Delta$ acts on
  the quotient $Y/\Gamma$},  
with action of $\Delta$ commuting with 
the action of $T$. 
We want to describe the $\Delta/\Gamma$-orbifold 
elliptic genus attached to $\E//_{\kappa}G=Y/\Gamma$
for the action induced by the action of $\Delta$.

Let $F_Y$ be the preimage in uniformization of a component of the fixed point set
$F \subset (\E //_{\kappa}G)$. Then the $\Delta$-orbifoldized 
 contribution of $F$ is the sum over all connected components $Q$ in $F_Y$
of $\Delta$-orbifoldized contributions of components $Q$ into
equivariant elliptic genus of $Y$ 
as described in Definition \ref{equivariantorbifoldcontribution}. 
More precisely, let $Q^{g,h}$ be fixed point set of pair of commuting
elements $g,h \in \Delta$ acting on $Q$, 
$V_{\lambda} \subset T_Y\vert_{Q^{g,h}}$ 
be the eigenbundle of the subgroup $<g,h>$ of $\Delta$ generated by
$(g,h)$,
$\Lambda$ is the full set of such eigenbundles in $T_{Y}\vert_{Q^{g,h}}$,
$\Lambda_{g,h}=\{\lambda \} \subset \Lambda$ is the set of
eigenbundles 
in the normal bundle to $Q^{g,h}$ in $Q$ such that $\lambda(g)=\lambda(h)=0$.
 Since we assume that the actions of $\Delta$ and $T$ commute, bundles
$V_{\lambda}$ are the eigenbundles of $T$ as well. 
Let $x_{\lambda}^T$ be $T$-equivariant Chern classes of $V_{\lambda}$
written in terms of the characters of $T^{eff}$, which is the quotient of $T$
acting effectively on the orbifold $\E//_{\kappa}G=Y/\Gamma$. 
\begin{dfn}\label{phaseorbifodization}
  $\Delta$-orbifoldized contribution of component $Q$ into equivariant elliptic genus 
of $\E_{\kappa}//G$ is given as follows:
\begin{equation}\label{orbgenorbphase} 
{1 \over {\vert \Delta \vert}}
\sum_{gh=hg,g,h \in \Delta} (\prod_{\lambda \in \Lambda_{Q^{g,h}}}
  x_{\lambda})\prod_{\lambda \in \Lambda_{Q}}
\Phi^{T^{eff}}_{Q^{g,h}}(x_{\lambda},g,h,z,\tau,\Delta)[Q^{g,h}]
\end{equation}
where 
$$
\Phi^{T^{eff}}_{Q^{g,h}}(x^T_{\lambda},g,h,z,\tau,\Delta)
=
{{\theta({x^T_{\lambda}\over {2 \pi
             i}}+\lambda(g)-\tau \lambda(h)-z)} \over 
  {\theta({x^T_{\lambda}\over {2 \pi
             i}}+\lambda(g)-\tau \lambda(h))}} e^{2 \pi i z
       \lambda(h)}
$$
\end{dfn}


The next final section contains examples showing how these definitions 
yields the invariants of Calabi Yau and Landau-Ginzburg models 
which already appeared in the literature as well as explicite examples
of some hybrid models. 

\section{Calculations of elliptic genera of phases and their
  specializations.}


\subsection{Elliptic case: weighted projective spaces and LG models}
 
The following is continuation of examples \ref{lgexample} and 
\ref{lgexample2} giving explicite form of elliptic genera of
corresponding phases and their specializations.
We shall start with the case of GIT quotient from Example 
\ref{lgexample2} i.e. Example \ref{lgexample}
with $w_i=1,D=n$.

\begin{prop}\label{lgexample3a}
 Consider $\CC^*$-action on $\CC\times \CC^n$ 
given by:
  $$  \lambda(s,z_1,....,z_n)=(\lambda^{-n}s,\lambda z_1,....,\lambda z_n) $$
There are two GIT quotients corresponding to linearizations
$\psi(\lambda)=\lambda^r$  with $r>0$ (called $\sigma$-model phase)
biholomorphic to the total space $\O_{\PP^{n-1}}(-n)$ of canonical bundle of $\PP^{n-1}$
and $r<0$ (called Landau-Ginzburg phase) biholomorphic to
$\CC^n/\mu_n$.

\par 1. The trivial sector of elliptic genus of Landau Ginzburg phase
is given by  
\begin{equation}\label{elluntwistedw=1}
          (-1)^n({{\theta(z(1-{1 \over n}))} \over {\theta ({z \over n})}})^n 
\end{equation}

\par 2.The elliptic genus of Landau-Ginzburg phase is
given by:
\begin{equation}\label{orbgenlgw=1}
{1 \over {n}}\sum_{0 \le a,b <n} (-{{\theta((1-{1 \over
      n})z+{{(a-b\tau)} \over n})}
\over {\theta({z\over n}+{{(a-b \tau)} \over n})}})^n\ee^{2 \pi \ii {{bz} }}
\end{equation}
\par 3. The elliptic genus of $\sigma$-model phase is given by: 
\begin{equation}\label{orbgenhypersurface1}
   (x{{\theta( {x \over {2 \pi i}}-z)} \over {\theta ({x \over {2 \pi
           i}})}})^{n-1}({\theta({nx \over {2 \pi i }}) \over
     {\theta({nx \over {2 \pi i }}-z)}})[\PP^{n-1}]
\end{equation}
and coincides with the elliptic genus of smooth hypersurface of degree
$n$ in $\PP^{n-1}$.
\par 4. (LG-CY correspondence) The elliptic genera
(\ref{orbgenhypersurface1})
and (\ref{orbgenlgw=1}) of $\sigma$ and LG models repsectively coincide.
\end{prop}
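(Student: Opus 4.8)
The plan is to obtain part~4 not by a direct manipulation of theta functions but as an instance of the local equivariant McKay correspondence, Theorem~\ref{localequivarmckay}, applied to the crepant resolution that links the two phases. The point is that the $\sigma$-model phase $\O_{\PP^{n-1}}(-n)$ is a crepant resolution of the LG phase $\CC^n/\mu_n$, so the local contributions of the relevant fixed components agree, and restricting to the diagonal $u=z$ (the defining recipe for the elliptic genus of a phase, Def.~\ref{defellgenphase}) turns this agreement into the equality of (\ref{orbgenhypersurface1}) and (\ref{orbgenlgw=1}).

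First I would pin down the birational geometry. Since $\mu_n$ acts freely on $\CC^n\setminus\{0\}$, the quotient $\CC^n/\mu_n$ has a single isolated singularity, and its blow-up there is exactly $\O_{\PP^{n-1}}(-n)$, obtained as the quotient of the blow-up $\O_{\PP^{n-1}}(-1)$ of $\CC^n$ by the fibrewise $\mu_n$-action. The discrepancy of the exceptional $\PP^{n-1}$ equals $\frac{\sum w_i}{D}-1=\frac{n}{n}-1=0$, so the contraction $\phi\colon\O_{\PP^{n-1}}(-n)\to\CC^n/\mu_n$ satisfies $\phi^*K_{\CC^n/\mu_n}=K_{\O_{\PP^{n-1}}(-n)}$ and is crepant; it is precisely the Calabi--Yau relation $D=\sum w_i=n$ that forces this vanishing. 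With $X=\CC^n$, $\Gamma=\mu_n$ and $T=\CC^*$ the dilation torus, the dilations descend to the effective quotient $T^{eff}$ on $X/\Gamma$ and lift to $T$ on $\tilde X=\O_{\PP^{n-1}}(-n)$ (cf. Example~\ref{lgexample}), so $\phi$ is equivariant; the $T$-action is trivial along the zero section and scales the fibre, whence its unique compact fixed component is $\PP^{n-1}$, contracted by $\phi$ to the origin.

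Theorem~\ref{localequivarmckay} then yields the identity of equivariant classes
\begin{equation}
   \Ell^{T^{eff}}_{\PP^{n-1}}(\tilde X)=\Ell^{T^{eff}}_{\{0\}}(\CC^n,\mu_n)
\end{equation}
before any specialization. On the left, the contribution of $\PP^{n-1}$ is assembled from the Euler-sequence Chern root $x=c_1(\O(1))$ of $T_{\PP^{n-1}}$ together with the normal bundle $\O(-n)$ carrying the dilation weight; putting $u=z$ reproduces (\ref{orbgenhypersurface1}) as in part~3. On the right, the terms are indexed by commuting pairs $(g,h)=(e^{2\pi\sqrt{-1}a/n},e^{2\pi\sqrt{-1}b/n})$ in $\mu_n\times\mu_n$, with equivariant Chern root $-u/n$ at the origin; putting $u=z$ reproduces the twisted-sector sum (\ref{orbgenlgw=1}) of part~2. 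Hence (\ref{orbgenhypersurface1}) and (\ref{orbgenlgw=1}) coincide, as claimed.

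The step I expect to be the main obstacle is the bookkeeping at the interface of the two descriptions: checking that the dilation genuinely lifts to a $T$-action compatible with the cover $T\to T^{eff}$, that $\PP^{n-1}$ is the \emph{only} compact fixed component over the origin, and above all that, after the substitution $u=z$, the equivariant weights carried by $T_{\PP^{n-1}}$ and by $N=\O(-n)$ match the conventions of Definition~\ref{equivariantorbifoldcontribution} termwise, so that the normalizing factor $\frac{1}{n}$, the sign $(-1)^n$, and the phases $e^{2\pi\sqrt{-1}bz}$ emerge correctly. As a self-contained check I would, alternatively, apply the localization isomorphism (\ref{localizationformula2}) to the integral (\ref{orbgenhypersurface1}) over the $n$ torus-fixed points of $\PP^{n-1}$ and verify directly that the resulting $n$ theta-quotients reorganize into the $n^2$ twisted sectors of (\ref{orbgenlgw=1}).
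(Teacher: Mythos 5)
Your argument for part 4 is in fact the paper's own argument: the author likewise deduces the LG/CY correspondence from the McKay correspondence applied to the crepant contraction $[\O_{\PP^{n-1}}(-n)] \rightarrow \CC^n/\mu_n$, and your localization bookkeeping for (\ref{orbgenlgw=1}) and (\ref{orbgenhypersurface1}) (equivariant Chern roots $-u/n$ of the normal bundle at the origin in the uniformization $W \rightarrow W/\mu_n$, respectively the Euler-sequence roots of $T_{\PP^{n-1}}$ together with the weight-$u$ root of $\O_{\PP^{n-1}}(-n)$ along the zero section, followed by restriction to the diagonal) is exactly how the paper derives parts 1--3. So, contrary to your framing (``not by a direct manipulation of theta functions''), you are not taking a different route; the only stylistic difference is that you cite Theorem \ref{localequivarmckay} explicitly where the paper invokes the McKay correspondence more loosely.

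The one genuine omission is the second assertion of part 3: that (\ref{orbgenhypersurface1}) coincides with the elliptic genus of a smooth hypersurface $V^n_{n-2}$ of degree $n$ in $\PP^{n-1}$. This does not follow from localization or from the McKay correspondence; it is a separate adjunction computation. The paper obtains the Chern roots of the hypersurface from $c(V^n_{n-2})=(1+x)^n/(1+nx)\vert_{V^n_{n-2}}$, writes $Ell(V^n_{n-2})$ as the evaluation on $[V^n_{n-2}]$ of the same theta-quotient divided by the extra factor $nx\,\theta(\frac{nx}{2\pi i}-z)/\theta(\frac{nx}{2\pi i})$, and then uses $[V^n_{n-2}]=nx\cap[\PP^{n-1}]$ to convert this into the integral (\ref{orbgenhypersurface1}) over $\PP^{n-1}$. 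You should add that step (and, for completeness, note that the trivial sector (\ref{elluntwistedw=1}) of part 1 is just the $a=b=0$ term of (\ref{orbgenlgw=1}), using the oddness of $\theta$ to extract the sign $(-1)^n$).
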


\begin{proof} Calculation of GIT quotients was already made in Example
  \ref{lgexample2}. The uniformization is given by $W \rightarrow
  W/\mu_n$ with $\CC^*$-action given by dilations of $W$.
The normal bundle of the fixed point, i.e. the origin $\O$ is direct 
sum of lines with equivariant Chern class being ${{u} \over n}$ where 
$u$ is the infinitesimal character of $\CC^*$ acting effectively on $W/\mu_n)$.
Hence contribution of the origin $\Ell^{\CC^*}_{\O}(W,\mu_n)$ 
into equivariant elliptic genus is given by 
$${1 \over {n}}\sum_{0 \le a,b <n} (-{{\theta({1 \over {2 \pi i}}
{u \over n} -z+{{(a-b\tau)} \over n})}
\over {\theta({1 \over {2 \pi i}}
{u\over n}+{{(a-b \tau)} \over n})}})^n\ee^{2 \pi \ii {{bz} }}
$$ 
which for $u=2\pi i z$ gives (\ref{orbgenlgw=1}). 
For $a=b=1$  one obtains (\ref{elluntwistedw=1}).

In the case of  $\sigma$-model, the $\CC^*$-action is the action 
via dilations on the fibers of the total space of
$\O_{\PP^{n-1}}(-n)$. The tangent bundle, of this total space $[\O_{\PP^{n-1}}(-n)]$,
restricted 
to the fixed point set, i.e. the zero section, get contributions from 
the tangent bundle to $\PP^{n-1}$ and from line bundle $\O_{\PP^{n-1}}(-n)$.
The equivariant Chern polynomial of the tangent bundle to
$\PP^{n-1}$ is $(1+x)^n$ and the equivatiant Chern class of $\O_{\PP^{n-1}}(-n)$ 
is ${-nx+u \over {2 \pi i}}$.
Hence the contribution of the fixed point set is:
$$(x{{\theta( {x \over {2 \pi i}}-z)} \over {\theta ({x \over {2 \pi
           i}})}})^{n-1}({\theta(-{nx \over {2 \pi i }}+{u \over {2 \pi
         i}}-z) \over 
     {\theta(-{nx \over {2 \pi i }}+{u \over {2 \pi i}})}})[\PP^{n-1}]
$$
Since $\theta(z)$ is an odd function, for $u=2 \pi i z$ we obtain
(\ref{orbgenhypersurface}).  
Since the Chern roots of a hypersurface $V_{n-2}^n$ of degree $n$ in 
$\PP^{n-1}$ are found from relation $c(V_{n-2})={{(+x)^n}\over
  {(1+nx)}}\vert_{V_{n-2}^n}$ it follows that elliptic genus of hypersurface is given
by 
$$(x{{\theta( {x \over {2 \pi i}}-z)} \over {\theta ({x \over {2 \pi
           i}})}})^{n-1}({\theta({nx \over {2 \pi i }}) \over
     {nx \cdot \theta({nx \over {2 \pi i }}-z)}})[V_{n-2}^n]$$
The latter coincides with (\ref{orbgenhypersurface}) 
since $[V_{n-2}^n]=nx \cap [\PP^{n-1}]$.

The LG/CY correspondence follows from McKay correspondence since 
contraction $[\O_{\PP^{n-1}}(-n)] \rightarrow \CC^n/\mu_n$ is a
crepant morphism.
\end{proof}

In the case when the action in Example (\ref{lgexample}) 
has arbitrary weights we obtain the following: 

\begin{prop}\label{lgexample3}
 Consider $\CC^*$-action on $\CC\times \CC^n$ 
with weights $w_1,...,w_n$ ($w_i\ \in \ZZ$ pairwise relatively prime)
and degree $D \in \ZZ_{>0}$
given by (\ref{weightedaction}):
  $$  \lambda(s,z_1,....,z_n)=(\lambda^{-D}s,\lambda^{w_1}z_1,....,\lambda^{w_n}z_n) $$
There are two GIT quotients corresponding to linearizations
$\psi(\lambda)=\lambda^r$  with $r>0$ (called $\sigma$-model phase)
and $r<0$ (called Landau-Ginzburg phase) respectively.

\par 1. The trivial sector of elliptic genus of Landau Ginzburg phase
is  
\begin{equation}\label{elluntwisted}
             \Prod_j {{{\theta({{w_jz} \over D}-z)} \over {\theta ({{w_jz}\over D})}}} 
\end{equation}

\par 2.The elliptic genus of Landau-Ginzburg phase is
given by:
\begin{equation}\label{orbgenlg}
{1 \over {D}}\sum_{0 \le a,b <D}\Prod_{i=1}^{i=n}
{{\theta(({w_i \over D}-1\
)z+
{{w_i(a-b\tau)} \over D})}
\over {\theta({zw_i \over D}+{{w_i(a-b \tau)} \over D})}}
\ee^{2 \pi \sqrt{-1} {{bw_iz} \over D}}
\end{equation}
\par 3. Let $\Gamma=\mu_{w_1}\times .... \times \mu_{w_n}$ be product
of group of roots of unity acting coordinate-wise on $\PP^{n-1}$.
Then with $x \in H^2(\PP^{n-1},\ZZ)$ being the positive generator
and with notations used in (\ref{commutingpairselliptic})
the elliptic genus of $\sigma$-model phase is given by
\begin{equation}\label{orbgenhypersurface}
 {1 \over {\vert \Gamma \vert}} \sum_{gh=hg,C} (\Prod_{\lambda(g)=\lambda(h)=0}x_{\lambda})
\Prod_{\lambda}
{{\theta({x_{\lambda}\over {2 \pi
             i}}+\lambda(g)-\tau \lambda(h)-z)} \over 
  {\theta({x_{\lambda}\over {2 \pi
             i}}+\lambda(g)-\tau \lambda(h))}} e^{2 \pi i z
       \lambda(h)} {{\theta({Dx \over {2 \pi \ii}})}
\over {\theta({Dx \over {2 \pi \ii}}
-z,\tau)}}
[{(\PP^{n-1})}^{g,h}] 
\end{equation}
(sum is taken over connected components $C$  of the fixed point sets of
pairs $g,h$).
\par 4. (LG-CY correspondence) If $\sum_1^n w_i=D$ then the elliptic genus of LG
model is equal to the orbifold elliptic genus of the hypersurface 
of degree $D$ in the weighted projective space $\PP(w_1,....,w_n)$ 
i.e. the $\Gamma$-orbifoldized elliptic genus of hypersurface of degree $D$ in
$\PP^{n-1}$ invariant under the action of the group $\Gamma$. 
\end{prop}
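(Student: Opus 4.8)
The plan is to treat the four assertions in the order stated, deriving parts 1 and 2 from a single localization at the origin of the uniformization, part 3 from the analogous computation on the zero section of the $\sigma$-phase, and part 4 by combining part 3 with the equivariant McKay correspondence.

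\textbf{Parts 1 and 2.} As in Example \ref{lgexample2}, the Landau-Ginzburg phase is the global quotient $\CC^n/\mu_D$ uniformized by $W=\CC^n$, on which $\mu_D$ acts with weights $w_i$ (that is, $\zeta\cdot(z_i)=(\zeta^{w_i}z_i)$) and on which $T=\CC^*$ acts by dilations covering $T^{eff}$ $D$-to-one. The only compact component of the fixed point set of $T$ is the origin $\O$, so I would apply Definition \ref{equivariantorbifoldcontribution} with $X=W$, $\Gamma=\mu_D$, $F=\{\O\}$. Since $F$ is a point, the factor $\prod_{\lambda\in\Lambda_{F^{g,h}}}x_\lambda$ is empty and equals $1$; since $\mu_D$ is cyclic, the sum over conjugacy classes and centralizers collapses to $\frac{1}{D}\sum_{0\le a,b<D}$ with $g=\zeta^a$, $h=\zeta^b$. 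The tangent space $T_\O W$ splits into the $n$ coordinate eigenlines; on the $i$-th line the logarithm of the character of $\zeta^a$ is $\lambda_i(g)=\frac{aw_i}{D}$, that of $\zeta^b$ is $\lambda_i(h)=\frac{bw_i}{D}$, and by Example \ref{lgexample} the equivariant Chern class is a multiple of $\frac{w_i u}{D}$. Substituting these into $\Phi^{T^{eff}}_{\O}$ from Definition \ref{defPhi}, taking the product over $i$, and restricting to the diagonal $u=2\pi\sqrt{-1}z$ of Definition \ref{defellgenphase} yields (\ref{orbgenlg}), proving part 2; part 1 is the single summand $a=b=0$, giving (\ref{elluntwisted}). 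The only care needed is bookkeeping the oddness and quasi-periodicity of $\theta$ to fix signs and the fractional-part convention for $\lambda_i(g),\lambda_i(h)$.

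\textbf{Part 3.} The $\sigma$-phase is the total space of a line bundle over the weighted projective space $\PP(w_1,\dots,w_n)$, the fiber being the $s$-coordinate of weight $-D$. Because the $w_i$ are pairwise coprime I would use $\PP(w)=\PP^{n-1}/\Gamma$ with $\Gamma=\mu_{w_1}\times\cdots\times\mu_{w_n}$ acting coordinate-wise, so that the compact fixed component is the zero section $\PP^{n-1}$ with this $\Gamma$-action. Restricting the tangent bundle of the total space to it splits it as $T\PP^{n-1}\oplus L$, where $L$ has equivariant Chern class $\frac{-Dx+u}{2\pi\sqrt{-1}}$. Applying the orbifold elliptic class (\ref{commutingpairselliptic}) to the base over each fixed locus $(\PP^{n-1})^{g,h}$ of a commuting pair, and evaluating the fiber factor $\frac{\theta(\frac{-Dx+u}{2\pi\sqrt{-1}}-z)}{\theta(\frac{-Dx+u}{2\pi\sqrt{-1}})}$ at $u=2\pi\sqrt{-1}z$ (using oddness of $\theta$ to rewrite it as $\frac{\theta(Dx/2\pi\sqrt{-1})}{\theta(Dx/2\pi\sqrt{-1}-z)}$), produces exactly (\ref{orbgenhypersurface}).

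\textbf{Part 4.} Under the Calabi-Yau constraint $\sum w_i=D$ the bundle above is $K_{\PP(w)}$, so the $\sigma$-phase is the total space of the canonical bundle of a compact orbifold, and contraction of the zero section is a crepant (K-equivalent) map onto the Landau-Ginzburg orbifold $\CC^n/\mu_D$. I would deduce equality of the LG and $\sigma$ elliptic genera from the equivariant McKay correspondence (Theorem \ref{mckay}, in its localized form Theorem \ref{localequivarmckay}) applied to this contraction, as in the final step of Proposition \ref{lgexample3a}. Combining this with the identification of the $\sigma$-genus with the $\Gamma$-orbifold elliptic genus of a $\Gamma$-invariant degree-$D$ hypersurface $V\subset\PP^{n-1}$, obtained from part 3 by the adjunction relations $[V]=Dx\cap[\PP^{n-1}]$ and $c(T_V)=(1+x)^n/(1+Dx)$, gives the asserted statement. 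The delicate point is precisely this last part: the $\sigma$- and LG phases are uniformized by different finite groups ($\prod\mu_{w_i}$ versus $\mu_D$), so Theorem \ref{invariancephasetransition} does not apply verbatim, and one must either exhibit a common uniformization on which both actions live or verify directly that the contraction is K-equivalent exactly when $\sum w_i=D$ and that Theorem \ref{localequivarmckay} transports the local contribution at the zero section to the one at $\O$. Equivalently, one may try to match (\ref{orbgenlg}) with the orbifold hypersurface genus as an identity of Jacobi-type $\theta$-expressions, in which the constraint $\sum w_i=D$ is exactly what balances the weights of the two sides.
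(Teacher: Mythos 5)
Your proposal follows essentially the same route as the paper's own (very terse) proof: localization at the origin of the uniformization $W=\CC^n$ with the $\mu_D$-action for parts 1--2, the uniformization of the $\sigma$-phase by the total space of $\O_{\PP^{n-1}}(-D)$ with fiber class $-Dx+u$ for part 3, and the crepant contraction plus (equivariant) McKay correspondence for part 4, exactly as the paper does by referring back to Proposition \ref{lgexample3a}. Your flagged ``delicate point'' in part 4 --- that the two phases are uniformized by different finite groups, so Theorem \ref{invariancephasetransition} does not apply verbatim and one must invoke the McKay correspondence for pairs/orbifolds or a common uniformization --- is a real gap that the paper itself glosses over, and your treatment of it is if anything more careful than the original.
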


\begin{proof} Semistable loci corresponding to two linearizations of
  $\CC^*$ action (\ref{weightedaction}) are $s \ne 0$ and $\sum_i \vert
  z_i^2\vert \ne 0$. The quotient of the first locus is the quotient 
of $\CC^n$ by the action of $\mu_D$ and gives the Landau-Ginzburg 
phase. Parts 1 and 2 follows directly from Definition
\ref{defellgenphase} using uniformization $W$ as used in (\ref{lgexample}).

The quotient of the second locus has projection onto $\CC^n\setminus
0/\CC^*$ with action on $\CC^n\setminus 0$ being the restriction of the
action (\ref{weightedaction}). Hence this GIT quotient can be
identified with the orbifold bundle 
over weighted projective space. Using its presentation as the quotient of 
the total space of $\O_{\PP^n}(-D)$ by the action of 
$\mu_{w_1} \times.....\times \mu_{w_n}$ we obtain an uniformization of
this phase. $c_1^T$ of the normal bundle to the fixed point
set in uniformization is $-Dx+u$ where $u$ is the infinitesimal
character and the claim follows from Definition \ref{defellgenphase}. 
The rest of calculations is direct generalization of those in 
Proposition \ref{lgexample3a}.
\end{proof}

\begin{remark}  Though without Calabi Yau condition the equality of
  elliptic genus of LG model and $\sigma$-model fails, McKay
  correspondence for pairs (cf. \cite{BLannals}) still provides an
  expression 
for elliptic genus of LG model as the 
elliptic genus of a pair. 
\end{remark}

\subsection{Specialization of elliptic genus $q \rightarrow 0$}

Proposition \ref{lgexample3} has as immediate consequence 
the following relation between the spectrum of weighted homogeneous singularities 
and  $\chi_y$ genus of corresponding hypersurfaces. 

\begin{prop}\label{reductiontoy}  1.({\rm Trivial sector of LG models})  Specialization 
$q \rightarrow 0$ of elliptic genus of LG phase corresponding  
to the action (\ref{weightedaction}) is given by 
\begin{equation}\label{lgspecialization}
lim_{q \rightarrow 0} \Ell(LG)=y^{-n \over2}\Prod_{j=1}^n {{y^{{w_j} \over D}-y}\over {y^{w_j \over D}-1}}
\end{equation}
(where $y=exp(2 \pi i z)$).

2. ({\rm Relation between trivial sector of LG model and the spectrum}) 
Let $\{q_l \}, q_l \in \QQ$ be the Steenbrink spectrum of isolated 
singularity of a weighted homogeneous polynomial $s=f(z)$ 
with weights $w_i$ and degree $D$ 
\footnote{i.e. a polynomial $f(z_1,....,z_n)$ such that
$sf(z)$ is invariant for the action (\ref{weightedaction})} 
i.e. $q_l$ is the collection of $\mu$, where $\mu$ is the Milnor number
of $f$, rational numbers $q_l$ such that 
$exp(2 \pi i q_l)$ is an eigenvalue of the monodromy acting 
on the graded component $Gr^p_F(H^{n-1}(X_{\infty,f}))$ of the Hodge filtration 
of the limit mixed Hodge structure on the cohomology of the 
Milnor fiber of $f=0$ (with multiplicity of $q_l$ being equal to the dimension 
of the eigenspace). Here $p$ is such that the integer part $[q_i]$ is equal to $n-p-1$ 
(resp. $n-p$) if $exp(2\pi i q_l) \ne 1$ (resp. $exp(2 \pi i q_l)=1$).  
Let 
\begin{equation}\Xi(y)=y^{-n \over 2}\sum_{l=1}^{\mu} y^{q_l}
\end{equation}
Then 
\begin{equation}
      lim_{q \rightarrow 0}\Ell(LG)(y)=(-1)^n\Xi(y)
\end{equation}

3. ({\rm Orbifoldized-$\chi_y$ genus of LG model}) 
Specialization of 
elliptic genus of Landau-Ginzburg model is given by:
\begin{equation}\label{weightsquotient2} 
 {1 \over D}y^{-{n \over 2}}\sum_{0 \le a <D}(\Prod {{y^{w_i \over D}-y\omega_D^{-aw_i}} \over {y^{w_i
      \over D}-\omega_D^{-aw_i}}}+\sum_{1 \le b <D} y^{b \over D})   
\end{equation} 
where $\omega_D=e^{{2 \pi \sqrt{-1}} \over D}$.

4. In the case $w_i=1, D=n$ (i.e. Calabi Yau condition is satisfied) the specialization $q=0$ has the form:
 \begin{equation}\label{LGq=0w_i=1}
 {1 \over n} y^{-{n \over 2}} \sum_{k=0}^{n-1}[({{1 -y^{1-{1 \over n}}e^{{2 \pi \sqrt{-1} k}\over n}}
    \over {1-y^{-{1 \over n}}e^{{2 \pi \sqrt{-1}k}\over n}}})^n
  +\sum_{b=1}^{n-1} y^{b \over n}]
\end{equation}
\end{prop}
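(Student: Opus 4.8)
The plan is to derive all four statements by taking the limit $q\to0$ in the closed formulas for the Landau--Ginzburg phase proved in Proposition \ref{lgexample3}---formula (\ref{elluntwisted}) for the trivial sector and (\ref{orbgenlg}) for the full orbifold genus---and then to identify the trivial-sector limit with the generating function of the Steenbrink spectrum. For part 1 I would substitute into (\ref{elluntwisted}) the product expansion $\theta(\zeta,\tau)=q^{1/8}(e^{\pi\ii\zeta}-e^{-\pi\ii\zeta})\prod_{n\ge1}(1-q^n)(1-q^ne^{2\pi\ii\zeta})(1-q^ne^{-2\pi\ii\zeta})$; in every theta-quotient the prefactor $q^{1/8}$ and the infinite products tend to $1$, so $\theta(\zeta,\tau)/\theta(\zeta',\tau)\to \sin(\pi\zeta)/\sin(\pi\zeta')$. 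Performing this termwise and rewriting $\sin$ through $y=e^{2\pi\ii z}$ collapses the $j$-th factor to $y^{-1/2}(y^{w_j/D}-y)/(y^{w_j/D}-1)$, which is (\ref{lgspecialization})---the computation already recorded in Example \ref{lgexample}.

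Part 2 is a comparison with a classical result. The Hodge-theoretic data defining the $q_l$ is exactly the input of Steenbrink's description of the spectrum of a weighted homogeneous isolated singularity, whose generating function is $\sum_l y^{q_l}=\prod_{j=1}^n (y^{w_j/D}-y)/(1-y^{w_j/D})$ (cf. \cite{st}). Since $(y^{w_j/D}-y)/(y^{w_j/D}-1)=-(y^{w_j/D}-y)/(1-y^{w_j/D})$, the product of the $n$ factors in part 1 differs from $\sum_l y^{q_l}$ by $(-1)^n$, and multiplication by $y^{-n/2}$ gives $\lim_{q\to0}\Ell(LG)=(-1)^n\Xi(y)$. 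The one point to check is that the normalization of the spectrum fixed in the statement (the prescription for $[q_l]$ in terms of $n-p$) is the one yielding this product; I would confirm this on $f=z^d$, where both sides equal $y^{-1/2}\sum_{j=1}^{d-1}y^{j/d}$.

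Part 3 applies the same limit to the full sum (\ref{orbgenlg}), organized by the twisting index $b$. When $b=0$ the theta-arguments carry no $\tau$, the limit is once more a product of sine-quotients, and keeping track of the phases $e^{2\pi\ii w_i a/D}$ produces the factors $(y^{w_i/D}-y\omega_D^{-aw_i})/(y^{w_i/D}-\omega_D^{-aw_i})$, i.e. the first summand of (\ref{weightsquotient2}). When $b\ne0$ both theta-arguments contain the same term $-\frac{w_ib}{D}\tau$, and this is the delicate step: the dominant term of $\theta(\alpha-\frac{w_ib}{D}\tau,\tau)$ as $q\to0$ is the one minimizing the $q$-power $\frac{k^2}{8}-\frac{k}{2}\frac{w_ib}{D}$ over odd $k$, and since numerator and denominator share the coefficient of $\tau$ these $q$-powers cancel in the quotient, leaving an explicit exponential which, combined with $e^{2\pi\ii w_ibz/D}$, turns the product over $i$ into a single monomial; summing over the sectors then reproduces the terms $\sum_{1\le b<D}y^{b/D}$ of (\ref{weightsquotient2}). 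The main obstacle is exactly this twisted-sector bookkeeping: when $w_ib/D\ge1$ one must first bring the argument into a fundamental domain using the quasi-periodicity of $\theta$, folding the resulting automorphy factors into the phase, and one must check that no subleading terms survive, so that the twisted sectors genuinely degenerate to pure monomials independent of $a$.

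Part 4 is then the specialization $w_i=1$, $D=n$ of part 3: every $\omega_D^{-aw_i}$ reduces to $e^{-2\pi\ii a/n}$, the $n$ identical factors assemble into the $n$-th power in (\ref{LGq=0w_i=1}), and the twisted monomials become $\sum_{b=1}^{n-1}y^{b/n}$. I expect parts 1, 2 and 4 to be routine once the theta asymptotics are set up, with essentially the whole weight of the argument resting on the $q\to0$ behaviour of the twisted summands of (\ref{orbgenlg}) in part 3.
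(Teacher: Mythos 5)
Your proposal follows the same route as the paper's own proof: parts 1, 3 and 4 are obtained by applying the $q\to 0$ degeneration of the theta-quotients (the paper's limit (\ref{keylimit})) termwise to the closed formulas (\ref{elluntwisted}) and (\ref{orbgenlg}) of Proposition \ref{lgexample3}, and part 2 is the identification with Steenbrink's generating function of the spectrum via \cite{st}. You are in fact more explicit than the paper about the $b\neq 0$ summands, where the paper simply asserts the specialization to $y^{b/D}$, but this is added detail rather than a different method.
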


\begin{proof} Trivial sector of $\chi_y$-genus of LG model was already
  derived directly in
   Example \ref{lgexample}.
Now we shall obtain it as $q \rightarrow 0$ limit 
of the trivial sector of elliptic genus given in Part 1 of Prop. \ref{lgexample3}.
Indeed, Part 1 of proposition \ref{reductiontoy} follows from: 
\begin{equation}\label{keylimit}
    lim_{\tau \rightarrow i\infty} {{\theta({u(z,\tau)-z,\tau)} \over
        {\theta(u(z,\tau),\tau)}}}e^{2\pi \sqrt{-1}c z}={{sin\pi(u(z,\tau)-z)}
      \over {sin(\pi u(z,\tau))}}e^{2 \pi i \sqrt{-1}cz}
\end{equation}
$$    =y^{-{ 1\over 2}} {{1-ye^{-2\pi \sqrt{-1}u(z,0)}}
    \over {1-e^{-2 \pi \sqrt{-1}u(z,0)}}}y^c  
$$
where $u(z,\tau)$ is a linear in $z$ function and, as above, $y=e^{2 \pi
  \sqrt{-1}z} $.
(\ref{keylimit})  implies that the factor corresponding to ${{w_i}\over D}$ in (\ref{elluntwisted})
has $y^{-{ 1\over 2}}{{y^{{w_i} \over D}-y} \over {y^{w_i \over
      D}-1}}$ as the limit and 
(\ref{lgspecialization}) follows. Part 2, as was mentioned in
\ref{lgexample}, is a consequence of \cite{st}. 

Specialization of a summand in (\ref{orbgenlg}) with $b \ne 0$ 
gives $y^{b \over D}$ while each factor in summand with $b=0$ becomes
$y^{-{1 \over 2}}{{y^{w_i \over D}-y\omega_D^{-aw_i}} \over {y^{w_i
      \over D}-\omega_D^{-aw_i}}}$.
Applying (\ref{keylimit}) to (\ref{orbgenlg}) one obtains:
\begin{equation}
 {1 \over D}y^{-{n \over 2}}\sum_{0 \le a <D}(\Prod {{y^{w_i \over D}-y\omega_D^{-aw_i}} \over {y^{w_i
      \over D}-\omega_D^{-aw_i}}}+\sum_{1 \le b <D} y^{b \over D}).
\end{equation}
This implies 3 while 4 follows from it immediately. 
\end{proof}

\subsection{Specialization $q\rightarrow 0, y=1$}

Such specialization leads to numerical invarinats of phases.

\begin{corollary} 
1.Specialization $q=0,y=1$ of untwisted section of LG model 
is given by
\begin{equation}
      \Ell(LG)(q=0,y=1)=\Prod_j (1-{D \over w_j})
\end{equation}
i.e. up to sign coincides with the Milnor number of the weighted homogeneous singularity 
with weights $w_1,...,w_n$ and degree $D$.

2.Specialization $q=0,y=1$ of elliptic genus 
of LG model 
in the case 4 of Prop. \ref{reductiontoy} gives the
orbifoldized euler characteristic of LG model
\footnote{or ``orbifoldized Milnor number''}:
 \begin{equation}\label{eulerLG}
      {1\over D}[(1-D)^D+D^2-1]
\end{equation}
and coincides with the euler characteristic of smooth 
hypersurface of degree $n$ in $\PP^{n-1}$ 
(LG/CY correspondence for euler characteristic, recall that for
$w_i=1$ CY
condition is $n=D$).
\end{corollary}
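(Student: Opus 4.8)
The plan is to obtain both statements as specializations of the formulas already established in Proposition \ref{reductiontoy}. For part 1, I would begin with the trivial-sector formula (\ref{lgspecialization}), namely $\lim_{q\to 0}\Ell(LG)=y^{-n/2}\Prod_{j=1}^n (y^{w_j/D}-y)/(y^{w_j/D}-1)$, and compute its value at $y=1$. Since each factor $(y^{w_j/D}-y)/(y^{w_j/D}-1)$ becomes $0/0$ at $y=1$, the key step is a L'H\^opital (or Taylor-expansion) computation: writing $y=e^{2\pi i z}$ and expanding near $z=0$, the numerator $y^{w_j/D}-y$ has leading term proportional to $(w_j/D-1)$ and the denominator $y^{w_j/D}-1$ has leading term proportional to $w_j/D$, so the ratio tends to $(w_j/D-1)/(w_j/D)=1-D/w_j$. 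The prefactor $y^{-n/2}$ tends to $1$. Multiplying over $j$ gives $\Prod_j(1-D/w_j)$, and recognizing this as $(-1)^n\Prod_j(D-w_j)/w_j=(-1)^n\mu$ via the standard formula $\mu=\Prod_j(D/w_j-1)$ for the Milnor number of a weighted homogeneous isolated singularity establishes the ``up to sign'' claim.

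For part 2, the starting point is formula (\ref{LGq=0w_i=1}) for the full orbifoldized $\chi_y$-genus in the Calabi-Yau case $w_i=1,D=n$. I would set $y=1$ directly in
\[
\frac{1}{n}\,y^{-n/2}\sum_{k=0}^{n-1}\Bigl[\Bigl(\frac{1-y^{1-1/n}e^{2\pi i k/n}}{1-y^{-1/n}e^{2\pi i k/n}}\Bigr)^{\!n}+\sum_{b=1}^{n-1}y^{b/n}\Bigr].
\]
At $y=1$ the inner sum $\sum_{b=1}^{n-1}y^{b/n}$ contributes $n-1$ for each $k$, giving $(n-1)$ summed $n$ times, i.e. $n(n-1)=n^2-n$; but I must track this against the claimed $D^2-1$ with $D=n$, so the accounting of which terms survive needs care. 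The main subtlety is the $k$-dependent bracketed power: at $y=1$ the ratio becomes $(1-e^{2\pi i k/n})/(1-e^{2\pi i k/n})=1$ for $k\neq 0$, contributing $1$ for each such $k$, while for $k=0$ the ratio is again $0/0$ and by the same L'H\^opital computation as in part 1 yields $(1-1/n)/(1/n)=n-1$ to the first power, but here raised to the $n$-th power after the limit, so I expect $(1-n)^n=(-1)^n(n-1)^n$. Collecting: the $k=0$ term gives $(1-n)^n$, the $n-1$ terms with $k\neq 0$ each give $1$, and the inner geometric sums across all $n$ values of $k$ give $n(n-1)$; dividing by $D=n$ should reproduce $\tfrac{1}{D}[(1-D)^D+D^2-1]$ after combining $(n-1)+n(n-1)=n^2-1=D^2-1$.

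The hard part, and the step I would verify most carefully, is the order of operations in the $k=0$ term: whether one takes the limit $y\to 1$ inside the $n$-th power or of the whole bracket, and correctly separating the $k=0$ contribution (which is genuinely indeterminate) from the $k\neq 0$ contributions (which are regular). Once the bracket is resolved as $(1-D)^D+(D-1)+D(D-1)$ and simplified, the equality with (\ref{eulerLG}) is immediate. Finally, the LG/CY correspondence for the Euler characteristic follows formally from part 4 of Proposition \ref{lgexample3} (the equality of LG and $\sigma$-model elliptic genera under $\sum w_i=D$) by further specializing to $q=0,y=1$, since the Euler characteristic is the value of the $\chi_y$-genus at $y=1$; so the agreement with $\chi(V^n_{n-2})$ is inherited from the already-established elliptic-genus equality rather than requiring a separate computation.
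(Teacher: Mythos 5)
Your computation of parts 1 and 2 matches the paper's proof essentially step for step: the same factor-wise limit at $y=1$ of (\ref{lgspecialization}) giving $\prod_j(1-D/w_j)=(-1)^n\mu$, and the same split of (\ref{LGq=0w_i=1}) into the $k=0$ term, the $n-1$ regular terms with $k\neq 0$, and the inner sums, regrouped as $(1-D)^D+(D-1)+D(D-1)$. One cosmetic slip: for $k=0$ the single-factor limit is $-(1-\tfrac1n)/(\tfrac1n)=1-n$, not $n-1$ (the derivative of the numerator $1-y^{1-1/n}$ carries a minus sign); your final value $(1-n)^n$ is nevertheless the one you use and agrees with the paper's displayed limit, and your worry about interchanging the limit with the $n$-th power is moot since the base has a finite limit. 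The only genuine divergence is the last claim: you obtain the agreement with $e(V^n_{n-2})$ by specializing the LG/CY equality of elliptic genera (part 4 of Prop.\ \ref{lgexample3a}/\ref{lgexample3}) at $q=0$, $y=1$, which is legitimate given those propositions; the paper instead deliberately gives an independent verification, comparing (\ref{eulerLG}) directly with the classical formula $e(V^D_{N-2})=\bigl((1-D)^N+ND-1\bigr)/D$, explicitly noting this can be ``seen directly, i.e.\ without use of McKay correspondence.'' Your route is shorter but inherits the McKay input; the paper's buys an independent consistency check on the whole chain of specializations.
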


\begin{proof}  
Contributions of either trivial or remaining sectors  follow from (\ref{LGq=0w_i=1}) 
and 
\begin{equation}
lim_{y \rightarrow 1}{{1 -y^{1-{1 \over D}}e^{{2 \pi \sqrt{-1} k}\over D}}
    \over {1-y^{-{1 \over D}}e^{{2 \pi \sqrt{-1}k}\over D}}}=
\begin{cases} 
  (1-D) \ \ k=0 \\
  1 \ \ k \ne 0 
\end{cases} 
\end{equation} 
In fact specialization of Prop.\ref{reductiontoy} part 4 gives
$ {1 \over D}((1-D)^n+(D-1)+D(D-1))$,
with the first and second summands corresponding to the 
first summand in the bracket with $k=0$ and $k\ge 1$ respectively
(since for $k>0$ each factor in the product is
equal to 1). The claim about matching the euler characteristic of LG model 
and smooth hypersurface can be seen directly, i.e. 
without use of McKay correspondence as in 4 in Prop.
\ref{lgexample3}, using the following 
formula (cf. \cite{methods}) 
for the euler characteristic of 
a smooth $(N-2)$-dimensional hypersurface of degree $D$:
\begin{equation}\label{eulerhypersurface}
 e(V^D_{N-2})={{(1-D)^N+ND-1} \over D}
\end{equation}
\end{proof}

\subsection{Orbifoldization of phases by the action of finite groups.}

In this section we illustrate the orbifoldization of elliptic genus 
of phases as defined in section \ref{quotientsphasessection}.

\begin{example}
Consider the $\sigma$-model phase corresponding to the action
(\ref{weightedaction}) 
with $w_i=1, i=1, ....,n$ and linearization with semistable locus
$\CC \times (\CC^n \setminus 0)$. The GIT quotient 
$\CC\times \CC^n\setminus 0//_{\kappa}\CC^*$  is the total space of the line
bundle denoted as  $[\O_{\PP^{n-1}}(-D)]$.  Let $\Gamma\subset SL_n(\CC)$ 
be a finite subgroup which we consider as acting on $\E=\CC\times \CC^n$ via 
$\gamma(s,v)=(s,\gamma \cdot v), \gamma \in \Gamma$.  
The orbifoldization of contribution of the only fixed component of 
$\CC^*$ action by dilations, which is the zero section of $\O_{\PP^{n-1}}(-D)$,
is given by the same formula as (\ref{orbgenhypersurface})
 but in which $\Gamma$  is an arbitrary subgroup of $SL_n(\CC)$ viewed 
 as acting on the total space of bundle $\O_{\PP^{n-1}}(-D)$.
As in the proof of part 3 Prop. \ref{lgexample3} 
we see that orbifoldization of the $\sigma$-model phase is the
$\Gamma$-orbifoldized 
elliptic genus of the hypersurface of degree $D$ in $\PP^{n-1}$.
\end{example}

\begin{example} Next we shall consider the $\Gamma$-quotients of LG models in the sense
of section (\ref{quotientsphasessection}). 
 First let us look at LG model corresponding to the case
 $w_1=....=w_n=D=1$  and its orbifoldization by the cyclic group $\Gamma=\mu_D$
 generated by the exponentail grading operator $J_W=(......,exp(2 \pi
 \sqrt{-1} {w_i \over D}),....)$. The GIT quotient corresponding to this LG phase
 is $\CC^n$ i.e. we have orbifoldization of smooth phase and elliptic
 genus of such orbifoldization coincides with the elliptic genus of LG
 models with $\CC^*$-action  (\ref{weightedaction}) as is specified  
in Definition  \ref{phasequotients}.
\end{example}

\begin{example}\label{orblgexample}
 Now we shall look at orbifoldization of arbitrary LG phase. Let
$\Delta\subset SL_n(\CC)$ be a finite subgroup containing exponential
grading operator $J_W=(......,exp(2 \pi
 \sqrt{-1} {w_i \over D},....)$\footnote{for discussion of the origins of this
   condition see \cite{borisovberglund}, Corollary 2.3.5}
 and such that $J_W$ belongs to its center. These conditions imply 
  that one can use as uniformization of LG phase with $\Delta$-action, 
the space $W$ such that for cyclic group 
$\Gamma=\mu_D$ generated by $J_W$ one has $W/\Gamma=\CC\times \CC^n//_{\kappa}\CC^*$.
Now Def. \ref{phaseorbifodization} yields the following
expression for orbifoldized LG phase:
\begin{equation}
{1 \over {\vert \Delta \vert}}\sum_{g,h \in \Delta, gh=hg}\Prod_{\lambda}
{{\theta(({w_i \over D}-1)z+
{\lambda(g)-\lambda(h)\tau})}\over {\theta({zw_i \over D}+{{\lambda(g)-\lambda(h) \tau}})}}
\ee^{2 \pi \ii {{\lambda(h)z}}}
\end{equation}
 \end{example}

The specialization $q \rightarrow 0$ of
orbifoldized phases goes as follows:

\begin{prop}\label{q=0} 
With notations as above, the elliptic genus of LG phase orbifoldized by a group $\Delta$ 
for $q \rightarrow 0$ specializes to 
\begin{equation} \label{orblgphasespec}
 \sum_{\{ h \} \in Conj(\Delta), X^h}
 y^{\sum_{\lambda, \lambda(h)\ne 0}(-{1 \over 2}+\lambda(h))} 
{1\over {\vert C(g) \vert}}\sum_{g \in Cent_{\Delta}(h)}
 \prod_{\lambda, \lambda(h)=1}y^{-{1 \over 2}}{{1-y^{(1-{w_j \over D})}exp(2 \pi i \lambda(g))}
   \over {1-y^{(-{w_j \over D})}exp(2 \pi i \lambda(g))}}  
\end{equation}
(here $X^h$ is the maximal subspace of $\CC^n$ fixed 
by a representative of a conjugacy class). In the case when $\Delta$
is abelian one has:
\begin{equation}\label{orblgphaseabelian}
{1\over {\vert G \vert}}\sum_{\{ h \} \in \Gamma, X^h}
 y^{\sum_{\lambda, \lambda(h)\ne 0}(-{1 \over 2}+\lambda(h))}
 \prod_{\lambda, \lambda(h)=1}{{y^{1 \over 2}-y^{({w_j \over D})-{1
         \over 2}} exp(2 \pi i \lambda(g))}
   \over {1-y^{({w_j \over D})}exp(2 \pi i \lambda(g))}}  
\end{equation}
\end{prop}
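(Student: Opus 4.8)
The plan is to take the $q\to 0$ limit of the orbifoldized LG elliptic genus from Example \ref{orblgexample} termwise over commuting pairs $(g,h)\in\Delta$, using the fundamental theta-function limit (\ref{keylimit}) that was already established in the proof of Prop. \ref{reductiontoy}. The starting point is the formula
\begin{equation*}
{1 \over {\vert \Delta \vert}}\sum_{g,h \in \Delta, gh=hg}\Prod_{\lambda}
{{\theta(({w_i \over D}-1)z+\lambda(g)-\lambda(h)\tau)}\over {\theta({zw_i \over D}+\lambda(g)-\lambda(h) \tau)}}
\ee^{2 \pi \ii \lambda(h)z},
\end{equation*}
and the goal is to see which terms survive the limit and how the double sum over $(g,h)$ reorganizes into the sum over conjugacy classes $\{h\}$ weighted by the centralizer, as in the orbifold definition (\ref{orbgenusdef}).

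First I would apply (\ref{keylimit}) factor by factor, treating separately the three cases $\lambda(h)=0$, $0<\lambda(h)<1$, and the boundary behaviour. The key observation is that (\ref{keylimit}) requires the argument of $\theta$ in the denominator to stay away from the lattice as $\tau\to i\infty$; a factor with $\lambda(h)\neq 0$ contributes a shift by $-\lambda(h)\tau$, which under $q=e^{2\pi\ii\tau}\to 0$ forces that factor to degenerate to a pure power of $y$ rather than a rational expression. This is the mechanism producing the prefactor $y^{\sum_{\lambda,\lambda(h)\neq 0}(-\frac12+\lambda(h))}$: each eigen-line with nonzero $\lambda(h)$ shift yields a monomial $y^{-\frac12+\lambda(h)}$ in the limit, while the surviving rational factors are exactly those with $\lambda(h)=0$ (equivalently $\lambda(h)=1$ in the normalized $[0,1)$ convention, accounting for the index set $\lambda(h)=1$ in the statement). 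For those surviving factors, (\ref{keylimit}) gives $y^{-1/2}\frac{1-y^{(1-w_j/D)}\exp(2\pi i\lambda(g))}{1-y^{(-w_j/D)}\exp(2\pi i\lambda(g))}$, matching (\ref{orblgphasespec}).

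Next I would handle the passage from the sum $\frac{1}{|\Delta|}\sum_{gh=hg}$ to $\sum_{\{h\}}\frac{1}{|C(g)|}\sum_{g\in C(h)}$. This is the standard reorganization: the contribution of a fixed $h$ depends only on its conjugacy class, and summing $g$ over the centralizer $C(h)$ while dividing by $|C(h)|$ absorbs the orbit-counting factor $|\Delta|/|C(h)|$; this is precisely the manipulation underlying the equivalence of (\ref{orbgenusdef}) and (\ref{orbcommuting}). The restriction of the surviving sum to $X^h$ (the maximal fixed subspace of a representative) reflects that eigen-lines with $\lambda(h)\neq 0$ have been absorbed into the $y$-monomial prefactor, so the remaining rational product runs only over directions fixed by $h$. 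For the abelian case (\ref{orblgphaseabelian}), every conjugacy class is a single element, $C(g)=\Delta$, and one simply rewrites $y^{-1/2}(1-y^{1-w_j/D}\cdots)=y^{1/2}-y^{(w_j/D)-1/2}\cdots$ by factoring, which gives the displayed numerator.

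\emph{The main obstacle} I anticipate is bookkeeping the fermionic-shift conventions so that the exponents land correctly. Specifically, one must reconcile the normalization $\lambda(h)\in[0,1)$ used in Definition \ref{defPhi} with the appearance of $\lambda(h)=1$ as an index in the statement, and verify that the half-integer shifts $-\frac12$ per eigen-line combine with the $\lambda(h)$ shifts and the explicit factor $\ee^{2\pi\ii\lambda(h)z}$ to produce exactly $y^{\sum(-\frac12+\lambda(h))}$ with no stray powers of $y$. A careful check that the boundary case (eigenvalue exactly $1$, i.e. $\lambda(h)=0$) is consistently assigned to the surviving rational factors — rather than double-counted in the monomial prefactor — is where sign and exponent errors are most likely to hide, and this is the step I would verify most carefully before declaring the two displayed formulas proven.
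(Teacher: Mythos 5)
Your proposal is correct and follows essentially the same route as the paper's (much terser) proof: take the $q\to 0$ limit of each factor $\Phi^{T}(x,g,h,z,\tau,\Delta)$, obtaining the monomial $y^{-1/2+\lambda(h)}$ when $\lambda(h)\neq 0$ and the rational factor $y^{-1/2}\frac{1-ye^{x+2\pi i\lambda(g)}}{1-e^{x+2\pi i\lambda(g)}}$ when the $h$-eigenvalue is trivial, then substitute the equivariant Chern classes $w_j u/D$ (restricted to $u=z$) and reorganize the sum over commuting pairs into conjugacy classes and centralizers. Your extra care about the boundary convention (the index ``$\lambda(h)=1$'' in the statement meaning the trivial eigenvalue) and the factoring step in the abelian case is a welcome elaboration of details the paper leaves implicit.
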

%
\begin{remark} The expression (\ref{orblgphasespec}) coincides with 
the one given in \cite{berglund}
 and expression (\ref{orblgphaseabelian})  coincides with the one  
given in Theorem 6 in \cite{ebeling}.
\end{remark}

\begin{proof} The term $\Phi^T(x,g,h,z,\tau,\Gamma)$ for $q
  \rightarrow 0$ has as limit:
\begin{equation}
    y^{{-1 \over 2}+\lambda(h)} \ \ if \ \ \lambda(h) \ne 0 \ \ \ {\rm resp.} \ \
    y^{-1\over 2}{{1-ye^{x+2 \pi i \lambda(g)}} \over {1-e^{x+2 \pi i \lambda(g)}}}
\end{equation}
For action corresponding to the weighted homogeneous polynomials with
weights $w_i$ and degree $D$ the equivariant Chern class of action of $\CC^*$ 
is $tw_i \over D$. This implies the proposition. 
\end{proof}

\subsection{Hybrid models}

Here we shall consider types of phases which are neither $\sigma$-models
or LG, called {\it hybrid} models (cf. \cite{n2}, \cite{chiodo}). 

\subsubsection{Complete intersection}

Sigma models corresponding to Calabi Yau complete intersections 
have hybrid counterparts rather than LG phases appearing in the case of 
 hypersurfaces. See \cite{chiodo} for alternative treatment of
 complete intersections via hybrid models.

\begin{dfn}\label{ci}{\it Phases of complete intersection}. 
Consider the $\CC^*$-action on $\CC^r \times \CC^n$ given by:
\begin{equation}
    \lambda(p_1,...,p_r,z_1,...,z_n)=(\lambda^{-q_1}p_1,...,\lambda^{-q_k}p_r,\lambda
    z_1,....,\lambda z_n)
\end{equation}
\end{dfn}
One of the GIT quotients, $Q_1$, is the total space of the bundle $\oplus
\O_{\PP^{n-1}}(-q_i)$ (corresponding to a linearization in one of
the cones in $Char(\CC^*) \otimes \QQ$) having 
as semistable locus $\CC^r(p_1,...,p_r) \times (\CC^n(z_1,....,z_n)
\setminus 0)$). 
For linearizations in the second cone the semistable locus 
is $(\CC^r\setminus 0) \times \CC^n$. The corresponding GIT quotient
$Q_2$ is the $\mu_D$-quotient of  the total space of the direct 
sum of $n$ copies of line bundles over weighted projective space 
$\O_{\PP(q_1,....,q_r)}(-1)^{\oplus n}/\mu_D$ where
$D=gcd(q_1,..,q_r)$ and $\mu_D$ is the group of roots of unity of
degree $D$ acting diagonally on the fibers of this direct sum. 

In the first case, the contribution into equivariant elliptic genus of the component 
of the fixed point set is given by 
\begin{equation}
   [{{x \theta({x \over {2 \pi \sqrt{-1}}}-z,\tau)}\over
 {\theta({x \over {2 \pi \sqrt{-1}}},\tau)}}]^n\cdot \Prod_{i=1}^r 
   {({x \theta({-q_ix \over {2 \pi \sqrt{-1}}}+u-z,\tau)}\over
 {\theta({-q_ix \over {2 \pi \sqrt{-1}}+u},\tau)})^n}[\PP^{n-1}] 
\end{equation}
where $u$ is the infintesimal generator of the equivariant cohomolgy
$H^*_{\CC^*}(p)$ of a point. For $u=z$ one obtains the elliptic genus
of smooth complete intersection of hypersurfaces of degree
$q_1,...,q_r$ in $\PP^{n-1}$.

Now let us calculate he elliptic genus 
in the second case (when one has a hybrid model cf. \cite{n2}). 
The GIT quotient $((\CC^r\setminus 0) \times \CC^n)/\CC^*
=\O_{\PP(q_1,....,q_r)}(-1)^{\oplus n}/\mu_D$ 
is a fiber space with the orbifold $\CC^n/\mu_D$ as a fiber and its  
base being the weighted projective space with the orbifold structure 
given by viewing $\PP^{r-1}(q_1,....,q_r)$ as a quotient of $\PP^{r-1}$ by the action 
of abelian group $\Gamma=\oplus_i \mu_{q_i}$.
The uniformization can be obtained  by taking quotient of the total 
space $[\O_{\PP^{r-1}}^{\oplus n}(-1)] $ of split vector bundle on $\PP^{r-1}$ 
by the action of $\Gamma$ such that projection on $\PP^{n-1}$ is 
compatible with $\PP^{r-1}\rightarrow \PP^{r-1}/\Gamma=\PP(q_1,...,q_r)$.
 The fixed point set of the action of $\CC^*$ on the GIT-quotient 
induced by action $t(p_1,...,p_r,z_1,...,z_n)\rightarrow (tp_1,..,tp_r,z_1,...,z_n)$
is $\PP^{r-1}(q_1,..,q_r)$ and for induced $\CC^*$-action on
$[\O_{\PP^{r-1}}^{\oplus n}(-1)] $
it is the zero section of this bundle. 
Hence for each pairs $(g,h)$ of elements of
$\Gamma$, 
contribution of $\CC^*$-fixed point to the summand of orbifold elliptic 
genus $\Ell^{\CC^*}_{\PP^{r-1}}([\O_{\PP^{r-1}}^{\oplus n}(-1)],\Gamma)$
corresponding to $(g,h)$ will have two factors.
One is coming from restriction of the tangent bundle 
\begin{equation}\label{restriction}
T_{[\O_{\PP^{r-1}}^{\oplus n}(-1)]}\vert_{{\PP^{r-1}}^{g,h}}
\end{equation} to the subspace of $\PP^{r-1}$ fixed by both $g,h$.
The latter coincides
with $T_{\PP^{r-1}}\vert_{{\PP^{r-1}}^{g,h}}$
This contribution is the summand $\Ell_{orb}(\PP^{r-1},\Gamma)^{g,h,C}$
of elliptic class 
$$\Ell_{orb}(\PP^{r-1},\Gamma)={1 \over {\vert \Gamma \vert}}
\sum_{g,h,C} \Ell_{orb}(\PP^{r-1},\Gamma)^{g,h,C}$$
 corresponding to pair $g,h$ and connected component $C$ of their fixed
 point set since $\PP^{r-1}$ is 
the fixed point set of $\CC^*$-action.
The quotient $T_{[\O_{\PP^{r-1}}^{\oplus n}(-1)]}\vert_{{\PP^{r-1}}^{g,h}}/T_{\PP^{r-1}}\vert_{{\PP^{r-1}}^{g,h}}$
is just $\O_{\PP^{r-1}}(-1)^n\vert_{{\PP^{r-1}}^{g,h}}$. The total
space of this bundle acted upon by the group 
$<g,h>$ considered as the automorphisms group of
$[\O_{\PP^{r-1}}(-1)^{\oplus n}]$. 
It also support the $\CC^*$-action by dilation. 
The corresponding equivariant contribution of this part of 
$T_{[\O_{\PP^{r-1}}^{\oplus n}(-1)]}\vert_{\PP^{r-1}}$ 
over connected component $C$ of ${\PP^{r-1}}^{g,h}$
is 
$$({{\theta({x \over
      {2 \pi i }}+{u \over D}-z+\lambda(g)-\tau\lambda(h))} 
\over {{\theta({x \over
      {2 \pi i }}+{u \over D}+\lambda(g)-\tau\lambda(h))}}}e^{2 \pi i \lambda(h)z})^n
$$
where $\lambda$ is the character of $<g,h>$ acting on this eigenbundle
over the connected component $C$ (term $u \over D$ reflects that contribution written in  terms of
character
of $\CC^*/\mu_D$ acting effectivly on the fibers).
The resulting elliptic genus of hybrid model hence can described as
\begin{equation}\label{ci} 
   {1 \over {\vert \Gamma \vert}} \sum_{g,h,C}
\Ell_{orb}(\PP^{r-1},\Gamma))^{g,h,C}\cdot 
({{\theta( {x \over {2\pi i }}+({1 \over
       D}-1)z+\lambda(g)-\lambda(h)\tau)} \over {\theta({x \over {2 \pi i}}+{z
         \over D}+\lambda(g)-\lambda(h)\tau)}} e^{{2 \pi i \lambda(h) z}})^n  [C]
\end{equation}
(sum over connected components $C \subset {\PP^{r-1}}^{g,h}$ 
of the fixed point sets of pairs $g,h$).
Note that this expression in the case $r=1$ becomes the elliptic genus of
LG-model since $x=0$, $\Gamma=\mu_D$ and for $g=e^{2 \pi i {a \over
    D}},h=e^{2 \pi i {b \over D}}$ one has $\lambda_{g,h}(g)={a \over D},
\lambda_{g,h}(g)={b \over D}$.

\subsubsection{Hypersurfaces in the products of projective spaces}

This material is discussed in \cite{n2} , Section 5.5.
Consider the action of $\CC^*\times \CC^*$ on 
$\CC \times \CC^n\times \CC^m$ given by
\begin{equation}
    (\lambda,\mu)(p,x_1,...,x_n,y_1,...,y_m)=
     (\lambda^{-n}\mu^{-m}p,\lambda x_1,...,\lambda x_n,\mu
     y_1,...,\mu y_m)
\end{equation}

There are 3 cones in $Char((\CC^*)^2)\otimes \QQ$ corresponding to 
linearizations with constant GIT with semistable loci respectively:
\begin{equation}
  \{\CC \times \CC^n\times \CC^m\}_{ss}=
\begin{cases}
          \CC \times (\CC^n\setminus 0) \times (\CC^m\setminus 0) , & 
       \text{Calabi Yau phase} \\
        \CC^* \times (\CC^n\setminus 0) \times (\CC^m) 
, & \text{hybrid phase} \\
       \CC^* \times (\CC^n) \times (\CC^m\setminus 0) 
, & \text{hybrid phase} 
        \end{cases}
\end{equation}
with the GIT quotients being respectively:
\begin{equation}
 \begin{cases}    [p_1^*\O_{\PP^{n-1}}(-n)\otimes
   p_2^*\O_{\PP^{m-1}}(-m)], \\
     [\oplus \O_{\PP^{n-1}}(-n)^m]/\mu_m, \\
      [\oplus \O_{\PP^{m-1}}(-m)^n]/\mu_n,
\end{cases}
\end{equation}
The respective elliptic genera are:
\begin{equation}
\begin{cases}
 [{{x \theta({x \over {2 \pi \sqrt{-1}}}-z,\tau)}\over
 {\theta({x \over {2 \pi \sqrt{-1}}},\tau)}}]^n  [{{y \theta({y \over {2 \pi \sqrt{-1}}}-z,\tau)}\over
 {\theta({y \over {2 \pi \sqrt{-1}}},\tau)}}]^n
 [{{\theta({{nx+my} \over {2 \pi \sqrt{-1}}},\tau)}\over
 {\theta({{nx+my} \over {2 \pi \sqrt{-1}}}-z,\tau)}}]
[\PP^{n-1} \times \PP^{m-1}] \\  
{1 \over n} \sum_{0 \le a,b<n}
({{\theta(-m{x \over {2 \pi i }}+({1 \over n}-1)z+{{a-b\tau}\over n},\tau)}
\over{\theta(-m{x \over {2 \pi i}}+{1 \over n}z+{{a-b\tau}\over n}
,\tau)}}e^{{2 \pi i bz}\over n})^n
({x{\theta({x \over {2 \pi i}}-z)} \over {\theta({x \over {2 \pi i }})}})^m[\PP\
^{m-1}] \\
{1 \over m} \sum_{0 \le a,b<m}
({{\theta(-{ny \over {2 \pi i }}+({1 \over m}-1)z+{{a-b\tau}\over m},\tau)}
\over{\theta(-{ny \over {2 \pi i}}+{1 \over m}z+{{a-b\tau}\over m}
,\tau)}}e^{{2 \pi i bz}\over m})^m
({x{\theta({y \over {2 \pi i}}-z)} \over {\theta({y \over {2 \pi i }})}})^n[\PP\
^{n-1}] 
\end{cases}
\end{equation}
The expression in the upper row represent the elliptic genus of 
Calabi Yau hypersurface of bidegree $(n,m)$ in 
$\PP^{n-1}\times \PP^{m-1}$.

\section{Appendix I: Theta functions}
Jacobi theta function $\theta(z,\tau), z \in \CC, \tau \in \HH$ is entire 
function on $\CC\times \HH$ where $\HH$ is the upper half plane 
\footnote{$\theta_1(z,\tau)$ or $\theta_{1,1}(z,\tau)$ are other 
common notations}  defined as the product:
\begin{equation}
\theta(z,\tau)=q^{1 \over 8}  (2 \sin \pi z)
\prod_{l=1}^{l=\infty}(1-q^l)
 \prod_{l=1}^{l=\infty}(1-q^l \ee^{2 \pi \ii z})(1-q^l \ee^{-2 \pi \ii
z})
\end{equation}
where $q=\ee^{2 \pi \ii  \tau}$. 

Its transformation law  is as follows:
\begin{equation}
\theta({z \over \tau},-{1 \over \tau})=
 -\ii \sqrt { \tau \over \ii }
\ee^{{\pi \ii z^2} \over {\tau}} \theta(z,\tau)  
\end{equation}
$$\theta (z+1,\tau)=-\theta(z,\tau), \ \ \ \theta(z+\tau,\tau )=-\ee^{-2
\pi \ii
z-\pi \ii \tau} \theta(z,\tau)$$

The derivative $\theta'(0,\tau)$ appears 
in expansion 
$\theta(z,\tau)=\theta'(0,\tau)z+{1\over 2}\theta''(0,\tau)z^2+....$
and satisfies:
\begin{equation}
  \theta'(0,\tau)=\eta^3(\tau), \ \ {\rm where} \ \  \eta(\tau)=q^{1 \over 24}\prod(1-q^n)  
\end{equation}
(Dedekind's) $\eta(\tau)$-function transforms as follows:
\begin{equation}
  \eta(-{1 \over \tau})=({\tau \over \ii})^{1 \over 2}\eta(\tau)
\end{equation}
It follows that
\begin{equation}\label{weightindextrade}
  {\theta({z \over \tau},-{1 \over \tau}) \over \theta'(0,-{1 \over \tau})}
={\ee^{{\pi \ii z^2}\over \tau} 
\over \tau}{\theta(z,\tau)\over \theta'(0,\tau)}  
\end{equation}
Let 
\begin{equation}\label{upsilon}
{\Upsilon}(x,\tau)=(1-e^{-x})\prod_{n=1}^{\infty}
{{(1-q^ne^x)(1-q^ne^{-x})}\over {(1-q^n)^2}} 
\end{equation}
and 
$$\Phi(x,\tau)=e^{x \over 2}\Upsilon(x,\tau)=
(e^{x \over 2}-e^{-{x \over
    2}})
\prod_{n=1}^{\infty}
{{(1-q^ne^x)(1-q^ne^{-x})}\over {(1-q^n)^2}} 
$$
(cf. \cite{hirzmod} p.170 and \cite{BLinvent} p.456). 
\footnote{in \cite{BLinvent}  Hirzebruch's $\Upsilon(z,\tau)$-function is denoted as
  $\Phi(z,\tau)$; Notation $\Phi(z,\tau)$ is the one used in appendix
to \cite{hirzmod} Cor.5.3.p.145.}
Hence
$$\Phi(x,\tau)=
2\sinh({x \over 2})\prod_{n=1}^{\infty}
{{(1-q^ne^x)(1-q^ne^{-x})}\over {(1-q^n)^2}} $$
(cf. \cite{hirzmod} p.117) i.e.
$$\Phi(x,\tau)={{\ii \theta({x \over {2 \pi \ii}},\tau)} \over 
{\eta^3(\tau)}}$$
(cf. \cite{BLinvent} p.461).

Weierstrass $\sigma$-function is defined by
\begin{equation} \label{sigmafunction}
\sigma(z,\tau)=z
\Prod_{\omega \ne 0 ,\omega \in {\ZZ+\ZZ \tau}}(1-{z\over
    {\omega}})e^{{z \over \omega}+{1 \over 2}({z \over \omega})^2}
\end{equation}
(cf. \cite{Chandra} p.52)
which can be used to describe $\Phi(z,\tau)$ where $z={x\over {2\pi \sqrt{-1}}}$ 
 (cf.\cite{hirzmod} p.145, Corollary 5.3):
\footnote{i.e. in terms of $x=2 \pi \sqrt{-1}z$ for which 
the lattice is $2 \pi \sqrt{-1}(\ZZ+\ZZ\tau)$ one has 
$\Phi(x,\tau)=\sigma(x,\tau)exp(-G_2(\tau)x^2)$.
Ref. \cite{hirzmod}, \cite{hirzarticle} use this notation while 
we selected traditional notations (in particular consistent with \cite{weil}).} 
\begin{equation}\label{sigmaformula}
       \Phi(z,\tau)=exp(4\pi^2G_2(\tau)
       z^2)\sigma(z,\tau)=exp(-{{e_2(\tau)}\over 2}z^2)\sigma(z,\tau)
\end{equation}
Here the quasi-modular forms $G_2(\tau)$ and $e_2(\tau)$ are given by
\begin{equation}\label{eisenstein}
G_2(\tau)=-{1 \over {24}}+\sum_{n=1}^{\infty}(\sum_{d \vert
  n}d)q^n=-{1 \over {8\pi^2}}e_2(\tau) 
\ \ \ {\rm where} \ \ e_2(\tau)=\sum_{n,(m,n)\ne (0,0)}{\sum_m}{1 \over {(m+n\tau)^2}}
\end{equation}
We also consider the following product expansion (cf. \cite{weil} Ch.4 sect.3):
\begin{equation}\label{weilphi}
      \phi(z,\tau)=x \Prod_e'(1-{z \over w})
\end{equation}
(related to (\ref{sigmafunction}); product is taken over the elements
$w$ of the lattice $W=\{1,\tau\}$,
subscript $e$ designates Eisenstein ordering of factors and $\prime$
indicates omitting
$(0,0)\in W$. $\phi(z,\tau)$  admits the following product
  formula in $q$ (cf. (15) ibid){ 
\begin{equation}
    \phi(z,\tau)={ 1\over {2 \pi \sqrt{-1}}}
{{(e^{\pi
        \sqrt{-1}z}-e^{-\pi \sqrt{-1}z}) \Prod_{n \ge 1}(1-q^n e^{2\pi
        \sqrt{-1}z})(1-q^n e^{-2\pi \sqrt{-1}z})}
\over {\Prod_{n \ge 1} (1-q^n)^2}}
\end{equation}
$$={ 1\over {2 \pi \sqrt{-1}}}\Phi(x,\tau)={1 \over {2
    \pi}}{{\theta(z,\tau)} \over {\eta^3(\tau)}}$$

\section{Appendix: Quasi-Jacobi forms}\label{appendix2}

Recall the following:

\begin{dfn}\label{jacobi}(cf. \cite{eichler}, \cite{mequasijacobi}) 
{\it Meromorphic Jacobi form} of index $t \in {1 \over 2}\ZZ$ and weight $k$ for a
finite index subgroup of the Jacobi group
$\Gamma_1^J=SL_2(\ZZ) \propto \ZZ^2$
is defined as a meromorphic in elliptic variable $z$ 
function $\chi$ on $\HH \times \CC$ 
having expansion
$\sum c_{n,r}q^n\zeta^r$ in $q=exp (2 \pi \sqrt{-1} \tau)$ 
and satisfying the following functional
equations:
\begin{equation}                                                                                                                               
\chi({{a\tau+b} \over {c\tau+d}},{z \over {c\tau+d}})=                                                                            
(c\tau+d)^ke^{{2 \pi i t c z^2} \over {c\tau+d}}\chi(\tau,z)
\end{equation}
\begin{equation}\label{elllaw}
\chi(\tau,z+\lambda\tau+\mu)=(-1)^{2t(\lambda+\mu)}                                                                             
e^{-2\pi i t(\lambda^2\tau+2 \lambda z)}\chi(\tau,z)                                                                              
\end{equation}
for all elements
$[\left( \begin{array}{ccc} a & b \\ c & d \end{array}                                                                            
\right ), 0 ]$ and $[\left( \begin{array}{ccc} 1 & 0 \\ 0 & 1 \end{array}                                                         
\right ), (a,b) ]$
in $\Gamma$.

A meromorphic Jacobi form is called a {\it weak 
Jacobi form} if 

a) it is holomorphic in $\HH \times \CC$ and 

b)  it has Fourier expansion 
$\sum c_{n,r}q^n\zeta^r$ in $q=exp (2 \pi \sqrt{-1} \tau)$ 
in which $n \ge 0$ 

The functional equation (\ref{elllaw}) implies that Fourier
coefficients
$c_{n,r}$ depend on $r \ {\rm mod} \ 2m$ and $\Delta=4nm-r^2$ 
({\it the discriminant}). A weak Jacobi form is called {\it Jacobi form}
(resp. {\it cusp form}) if the coefficients $c_{n,r}$ with $\Delta<0$ (resp. $\Delta \le 0$) 
are vanishing. 
\footnote{mentioning that this condition on Fourier expansion applicable in 
holomorphic case only and the restriction $n\ge 0$ were inadvertently omitted in 
\cite{mequasijacobi}.}
 
\begin{remark} Presentation (\ref{formula1}) 
  provides 
  Fourier expansion of elliptic genus having 
  non-negative powers of $q$ (i.e. yields a weak Jacobi form) while
  powers of $y$ can be negative.
\end{remark}

The algebra of Jacobi forms is the bi-graded algebra $J=\oplus J_{t,k}$.
and the algebra of Jacobi forms of index zero is the sub-algebra
$J_0=\oplus_k J_{0,k} \subset J$.
\end{dfn}

We
shall need below the following real analytic functions:
\begin{equation}\label{lambdamu}
\lambda(z,\tau)={{z-\bar z} \over {\tau-\bar \tau}},
\ \ \ \ \mu(\tau)=
{1 \over {\tau -\bar \tau}}
\end{equation}

Their transformation properties are as follows:

\begin{equation}\label{lambdamu}
 \lambda({z \over {c\tau+d}},{{a \tau+b} \over {c \tau+d}})=
(c\tau+d)\lambda(z,\tau)-2icz
\end{equation}
$$\lambda(z+m\tau+n,\tau)=\lambda(z,\tau)+m$$
\begin{equation}
\mu({{a \tau+b} \over {c \tau+d}})=(c\tau+d)^2\mu(\tau)
-2ic(c\tau+d)
\end{equation}

\begin{dfn}\label{defquasijacobi} {\it Almost meromorphic Jacobi form} 
of weight $k$, index zero and  a 
depth $(s, t)$ is a (real) meromorphic function in 
$\CC\{q^{1 \over l},z\}[z^{-1}, \lambda, \mu ]$, 
with $\lambda,\mu$ given by (\ref{lambdamu}), 
i.e. polynomial in $\lambda, \mu$ with complex meromorphic 
functions as coefficients 
which

a) satisfies the functional equations in Definition \ref{jacobi} of
 Jacobi 
forms of weight k and index zero and

b) which has degree at most $s$ in $\lambda$ and at most t in $\mu$.

{\it Quasi-Jacobi form of weight $k$, index zero and depth $(s,t)$} is
the term of bi-degree $(0,0)$
in $\lambda, \mu$ of an almost meromorphic Jacobi form of weigth $k$ 
and depth $(s,t)$.
{\it Algebra of quasi-Jacobi forms} is bi-graded filtered algebra generated
by filtered algebra of quasi-Jacobi forms and algebra of Jacobi forms (which
have depth $(0,0)$ and have trivial filtration).

\end{dfn}

\begin{example}\label{eisenstein} 1.{\it Two variable Eisenstein series}
  (cf. \cite{weil},\cite{mequasijacobi}).
 Consider the following, meromorphic in $z$ functions 
\begin{equation}
      E_n(z,\tau)=\sum_{a,b \in \ZZ^2}({1 \over {z+a\tau+b}})^n   \ \ \ n
      \in \ZZ, n \ge 1 
\end{equation} 
These series are absolutely convergent for $n \ge 3$ and yields
meromorphic Jacobi forms of weight $n$ and index $0$. 
For $n=1,2$
one obtains meromorphic function using Eisenstein summation
 (cf. \cite{weil}) which are quasi-Jacobi forms of index $0$, weight
 $n=1,2$ and depth $(1,0)$ for $n=1$ and $(0,1)$ for $n=2$
 (cf. \cite{mequasijacobi}). 
$E_2-e_2$ is Jacobi form (here $e_2(\tau)$ is quasi-modular form which is 
the one variable Eisenstein series).

The products 
\begin{equation}\label{eisensteinholo}
\hat E_n(z,\tau)=E_n(z,\tau)({\theta(z,\tau) \over {\theta'(0,\tau)}})^n  \ \ \ (n \ne 2)
\ \ \ 
\hat E_2=(E_2(z,\tau)-e_2(\tau))({\theta(z,\tau) \over {\theta'(0,\tau)}})^2
\end{equation}
are holomorphic quasi-Jacobi forms (Jacobi forms for $n \ge 2$.
\end{example}

The structure of the algebra of quasi-Jacobi forms generated by forms
(\ref{eisensteinholo}) is as follows. 
\begin{theorem} The algebra $QJac_{0,*}$ (or simply $QJac$) 
of quasi-Jacobi forms of weight zero and 
  index ${d\over 2}, d \in \ZZ^{\ge 1}$ is polynomial 
algebra with generators $\hat E_n, n=1,2,3,4$.
The algebra $Jac_{0,{*}}$ of Jacobi forms of weight zero and index $d \over 2$ 
(or $Jac$) is polynomial algebra in three generators $\hat E_2,\hat E_3,\hat E_4$. 

The algebra $QJac$ is isomorphic to the algebra of complex cobordisms
 $\Omega^U$ modulo the ideal $I$ generated by $X_1-X_2$ where $X_1,X_2$ are
$K$-equivalent.
The algebra $Jac$ is isomorphic to the algebra $\Omega^{SU}$ of complex cobordisms of
manifolds 
with trivial first Chern class modulo the ideal $I \cap \Omega^{SU}$.
\end{theorem}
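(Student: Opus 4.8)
The plan is to route both isomorphisms through the elliptic genus homomorphism and to deduce the two structure statements from the theory of (quasi-)Jacobi forms. By Hirzebruch's formalism the characteristic series (\ref{formula2}) defines a ring homomorphism $Ell\colon \Omega^U\otimes\CC \to QJac$, and the modularity recalled after (\ref{formula2}) shows that it restricts to $Ell\colon \Omega^{SU}\otimes\CC \to Jac$ on the subring of classes with $c_1=0$. First I would verify that $Ell$ kills the ideal $I$: by the push-forward formula of Theorem \ref{blowup} the genus is unchanged under blow-ups preserving $K_X$, and combining this with the weak factorization theorem for birational maps of smooth projective varieties shows that $K$-equivalent smooth projective $X_1,X_2$ satisfy $Ell(X_1)=Ell(X_2)$. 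Hence $I\subseteq\ker Ell$ and $Ell$ descends to $\overline{Ell}\colon \Omega^U\otimes\CC/I \to QJac$, and likewise on the $SU$-side.

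Next I would establish the two structure statements. For the algebra $Jac$ of Jacobi forms of weight $0$ I would invoke the Eichler--Zagier description (\cite{eichler}) of weak Jacobi forms as a polynomial algebra over the ring of modular forms, extended to the half-integral index / theta-multiplier case so as to capture the odd index $3/2$, restrict to weight $0$, and re-express the resulting generators in terms of the holomorphic forms $\hat E_2,\hat E_3,\hat E_4$ of (\ref{eisensteinholo}); algebraic independence then follows by inspecting the leading $z$-expansions $\hat E_n=1+O(z^2)$ together with the $q$-expansions. For $QJac$ I would use the depth filtration of \cite{mequasijacobi}: the almost-meromorphic completions of $\hat E_1$ and $\hat E_2$ have depths $(1,0)$ and $(0,1)$ in the real-analytic functions $\lambda,\mu$, whereas $\hat E_3,\hat E_4$ have depth $(0,0)$, so passing to the associated graded shows that $QJac$ is obtained from $Jac$ by freely adjoining $\hat E_1$ and $\hat E_2$. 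This is exactly Theorem 2.12 of \cite{mequasijacobi}, and it gives $QJac=\CC[\hat E_1,\hat E_2,\hat E_3,\hat E_4]$ and $Jac=\CC[\hat E_2,\hat E_3,\hat E_4]$.

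With these in hand, surjectivity of $\overline{Ell}$ is immediate once each generator is realized as an elliptic genus: one takes $\hat E_1$ from $\PP^1$, $\hat E_2$ from a K3 surface (the computation in Section \ref{review} makes $Ell=24\,\hat E_2$ explicit), $\hat E_3$ from a Calabi--Yau threefold and $\hat E_4$ from a Calabi--Yau fourfold, and checks that these span the generators modulo the known relations. It then remains to prove injectivity of $\overline{Ell}$, that is $\ker Ell\subseteq I$, which is the universality theorem of Totaro (\cite{totaro}): the kernel of the complex elliptic genus is generated by differences $[X_1]-[X_2]$ of varieties related by a classical flop. Since a flop is a crepant, hence $K$-equivalent, birational modification, the flop ideal $I_{\mathrm{flop}}$ lies inside $I$, so $\ker Ell=I_{\mathrm{flop}}\subseteq I\subseteq\ker Ell$ forces $\ker Ell=I$; intersecting with the $c_1=0$ classes gives $\ker(Ell|_{\Omega^{SU}})=I\cap\Omega^{SU}$. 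Combining injectivity and surjectivity yields $QJac\cong\Omega^U\otimes\CC/I$ and $Jac\cong\Omega^{SU}\otimes\CC/(I\cap\Omega^{SU})$.

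The main obstacle is the inclusion $\ker Ell\subseteq I$: producing enough relations in the cobordism ring to exhaust the kernel is precisely the content of Totaro's theorem and is the deep geometric input, which I would cite rather than reprove. The secondary technical point is the generation half of the quasi-Jacobi structure theorem, where one must track the depth filtration carefully to be sure that no generators beyond $\hat E_1,\dots,\hat E_4$ are required; by contrast the algebraic-independence half is a routine expansion computation.
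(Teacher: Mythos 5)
The paper gives no proof of this theorem at all: it is imported from \cite{mequasijacobi} (the structure statement is essentially Theorem 2.12 there, already invoked in Section \ref{review}), so there is nothing in the text to compare your argument against step by step. That said, your architecture is the natural one and almost certainly close to the intended route: Eichler--Zagier/Gritsenko plus the depth filtration for the structure of $Jac_{0,*}$ and $QJac_{0,*}$; Theorem \ref{blowup} together with weak factorization to show that $K$-equivalent smooth projective varieties have equal elliptic genera, hence $I\subseteq\ker Ell$; realization of the generators by $\PP^1$, K3 (where the paper's own computation gives $24\hat E_2$) and Calabi--Yau three- and fourfolds for surjectivity; and the squeeze $I_{\mathrm{flop}}\subseteq I\subseteq \ker Ell=I_{\mathrm{flop}}$ via Totaro's theorem for injectivity. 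Outsourcing the deep inclusion $\ker Ell\subseteq I$ to \cite{totaro} is exactly right, since a classical flop is a $K$-equivalence.

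There is, however, one concrete error in your depth-filtration step. You assign depth $(0,1)$ to $\hat E_2$ and conclude that $QJac$ is obtained from $Jac$ by freely adjoining \emph{both} $\hat E_1$ and $\hat E_2$. With the normalization (\ref{eisensteinholo}) used in the statement, $\hat E_2=(E_2-e_2)(\theta/\theta')^2=\wp(z,\tau)(\theta/\theta')^2$ is an honest (weak) Jacobi form of depth $(0,0)$; it is $E_2$, not $E_2-e_2$, whose almost-meromorphic completion involves $\mu$. Taken literally, your bookkeeping would put $\hat E_2$ outside $Jac$ and contradict the second assertion of the very theorem you are proving. The correct picture is that only $\hat E_1$ has positive depth among the four generators, so the quasi-Jacobi statement amounts to $QJac=Jac[\hat E_1]$ --- and then the generation half is more delicate than you suggest, because one must still account for weight-zero quasi-Jacobi forms whose completions involve $\mu$ (for instance $e_2(\tau)(\theta/\theta')^2$, which is a product of an index-zero quasi-Jacobi form and a Jacobi form and so lies in the algebra of Definition \ref{defquasijacobi}); this is precisely where the precise meaning of $QJac_{0,*}$ (the ring generated by Taylor coefficients of the characteristic series versus the full algebra of Definition \ref{defquasijacobi}) has to be pinned down against \cite{mequasijacobi}. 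Two smaller points to make explicit: the Eichler--Zagier structure theorem must genuinely be extended to half-integral index (your dimension count only closes because $\hat E_3$ has index $3/2$), and for surjectivity in index $2$ you need a Calabi--Yau fourfold whose genus has nonzero component along $\hat E_4$ in the two-dimensional space spanned by $\hat E_2^2$ and $\hat E_4$, not merely a nonzero elliptic genus.
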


\begin{remark} 1. Different generators of the algebra $Jac$ are described
  in \cite{gritsenko}. 

2.Term ``quasi-Jacobi forms'' used in \cite{ober} in
a slightly more narrow sense 
than in \cite{mequasijacobi} and above, where 
author apparently was unaware of \cite{mequasijacobi}.
Quasi-Jacobi forms considered in \cite{ober}  belong to the 
algebra generated by the function:
\begin{equation}
      {\theta(z,\tau) \over {\eta^3(\tau)}}, {{\partial log
          ({\theta(z,\tau) \over {\eta^3(\tau)}})} \over {\partial
          z}}, e_2(\tau),e_4(\tau),\wp(z,\tau), y{{d\wp(z,\tau)}\over {dy}}
\end{equation}
($y=exp(2 \pi \sqrt{-1}z)$) are in the algebra of meromorphic quasi-Jacobi forms as defined 
in (\ref{defquasijacobi}) (cf. also \cite{mequasijacobi}). 
Indeed  ${{\partial ({\theta(z,\tau) \over {\eta^3(\tau)}})} \over
  {\partial z}}=E_1(z,\tau)$
(it follows from Appendix I, also cf. \cite{weil} ch.IV,sect.3 (15))
and 
also $\wp(z,\tau)=E_2-e_2, y{{d\wp(z,\tau)} \over
 {dy}} =-2E_3(z,\tau)$ and 
modular functions are clearly part  of the algebra described in Def. \ref{defquasijacobi}. 
\end{remark}

\section{Acknowledments}

The material of this paper was reported on several conferences
including Durham, Lausanne and Toronto. I want to thank organizers of
these meeting and  all those who commented on my reports
The author was supported 
by a grant from Simons Foundation.

\end{document}